\numberwithin{equation}{section}
\theoremstyle{plain}
\newtheorem{thm}{Theorem}[section]
\newtheorem{prop}[thm]{Proposition}
\newtheorem{lem}[thm]{Lemma}
\newtheorem{cor}[thm]{Corollary}
\theoremstyle{definition}
\newcommand{\ichi}{\mathbf{1}}
\newcommand{\R}{\mathbb{R}}
\newcommand{\Z}{\mathbb{Z}}
\newcommand{\N}{\mathbb{N}}
\newcommand{\calF}{\mathcal{F}}
\newcommand{\calS}{\mathcal{S}}
\newcommand{\supp}{\mathrm{supp}\, }
\newcommand{\suppast}{\mathrm{supp}{^{\ast}}\, }
\begin{document}

\title{Bilinear pseudo-differential operators with exotic symbols}

\author{Akihiko Miyachi \and Naohito Tomita}
\date{}

\address{Akihiko Miyachi \\
Department of Mathematics \\
Tokyo Woman's Christian University \\
Zempukuji, Suginami-ku, Tokyo 167-8585, Japan}
\email{miyachi@lab.twcu.ac.jp}

\address{Naohito Tomita \\
Department of Mathematics \\
Graduate School of Science \\
Osaka University \\
Toyonaka, Osaka 560-0043, Japan}
\email{tomita@math.sci.osaka-u.ac.jp}

\keywords{Bilinear pseudo-differential operators,
bilinear H\"ormander symbol classes, exotic symbols}

\subjclass[2010]{42B15, 42B20, 47G30}

\begin{abstract}
The boundedness 
from $L^p \times L^q$ to $L^r$, $1<p,q \le \infty$,
$0<1/p+1/q=1/r \le 1$, of 
bilinear pseudo-differential operators
with symbols in the bilinear H\"ormander class
$BS^m_{\rho,\rho}$, $0 \le \rho <1$, is proved for 
the critical order $m$. 
Related results for the cases $p=1$, $q=1$ or $r=\infty$
are also obtained.
\end{abstract}

\maketitle

\section{Introduction}\label{section1}
Let $m \in \R$ and $0 \le \delta \le \rho \le 1$.
We say that a function
$\sigma(x,\xi,\eta) \in C^{\infty}(\R^n \times \R^n \times \R^n)$
belongs to the bilinear H\"ormander symbol class
$BS^m_{\rho,\delta}=BS^m_{\rho,\delta}(\R^n)$
if for every triple of multi-indices
$\alpha,\beta,\gamma \in \N_0^n=\{0,1,2,\dots\}^n$
there exists a constant $C_{\alpha,\beta,\gamma}>0$ such that
\[
|\partial^{\alpha}_x\partial^{\beta}_{\xi}
\partial^{\gamma}_{\eta}\sigma(x,\xi,\eta)|
\le C_{\alpha,\beta,\gamma}
(1+|\xi|+|\eta|)^{m+\delta|\alpha|-\rho(|\beta|+|\gamma|)}.
\]
For a symbol $\sigma \in BS^{m}_{\rho,\delta}$,
the bilinear pseudo-differential operator
$T_{\sigma}$ is defined by
\[
T_{\sigma}(f,g)(x)
=\frac{1}{(2\pi)^{2n}}
\int_{\R^n \times \R^n}e^{i x \cdot(\xi+\eta)}
\sigma(x,\xi,\eta)\widehat{f}(\xi)\widehat{g}(\eta)\, d\xi d\eta,
\qquad f,g \in \calS(\R^n).
\]

The study of bilinear operators $T_{\sigma}$ with $\sigma$ in 
the bilinear H\"ormander class $BS^m_{\rho,\delta}$ 
was initiated by B\'enyi, Maldonado, Naibo, and Torres in \cite{BMNT}, 
where in particular the symbolic calculus of the operators $T_{\sigma}$, 
$\sigma \in BS^m_{\rho,\delta}$, was established.  
The boundedness properties of those operators have been considered in many works, 
some of which will be mentioned below. 
In the present paper, we shall also consider the boundedness property of 
the operators $T_{\sigma}$, $\sigma \in BS^{m}_{\rho,\delta}$. 
For the boundedness of the operators $T_{\sigma}$, we shall 
use the following terminology. 
If $X,Y,Z$ are function spaces on $\R^n$ 
equipped with quasi-norms 
$\|\cdot \|_{X},\,\|\cdot \|_{Y},\,\|\cdot \|_{Z}$ 
and if there exists a constant $A_{\sigma}$ such that 
the estimate 
\begin{equation}\label{boundedness-XYZ}
\|T_{\sigma}(f,g)\|_{Z}
\le A_{\sigma} \|f\|_{X} \|g\|_{Y}, 
\quad 
f\in \calS \cap X, 
\quad 
g\in \calS \cap Y,  
\end{equation}
holds, then we shall simply say that 
$T_{\sigma}$ is bounded from $X\times Y$ to $Z$ and write 
\[
T_{\sigma}: X\times Y \to Z.
\] 
The smallest constant $A_{\sigma}$ of \eqref{boundedness-XYZ} 
is denoted by 
$\|T_{\sigma}\|_{X\times Y \to Z}$.

In the case $\rho=1$,
bilinear pseudo-differential operators
with symbols in $BS^0_{1,\delta}$, $\delta<1$,
fall into the bilinear Calder\'on-Zygmund theory
in the sense of Grafakos-Torres \cite{GT} and 
their boundedness properties are well-understood; 
see, e.g., Coifman-Meyer \cite{CM}, 
B\'enyi-Torres \cite{BT}, 
and B\'enyi-Maldonado-Naibo-Torres \cite{BMNT}.
In the case $\rho<1$, however, 
we cannot reduce the corresponding operators
to bilinear Calder\'on-Zygmund operators 
and there are some interesting features peculiar to 
the bilinear case. 
For example, 
in contrast to the well-known 
Calder\'on-Vaillancourt theorem (\cite{CV})
for linear pseudo-differential operators, 
the condition $\sigma \in BS^0_{\rho,\rho}$, $0\le \rho <1$, 
does not assure any boundedness of the corresponding 
bilinear operator. 
This gap between the linear and bilinear cases
was first pointed out by B\'enyi-Torres \cite{BT-2} 
for the case $\rho=0$.

The subject of the present paper concerns with the estimate 
\begin{equation}\label{boundedness-pqr}
T_{\sigma}: H^p \times H^q \to L^r,
\quad 
\frac{1}{p}+ \frac{1}{q}=\frac{1}{r},  
\quad 
\sigma \in BS^{m}_{\rho,\rho}, 
\quad 
0\le \rho <1, 
\end{equation}
where $H^p$ denotes Hardy space and 
$L^r$ denotes Lebesgue space. 
In the case $p=q=r=\infty$, 
instead of $L^{\infty}\times L^{\infty} \to L^{\infty}$, 
we shall consider $L^{\infty}\times L^{\infty} \to BMO$.

For $0 \le \rho <1$
and for $0<p,q,r \le \infty$ satisfying $1/p+1/q=1/r$,
we define 
\begin{align*}
&
m_\rho(p,q)
=(1-\rho)m_0(p,q),
\\
&
m_0(p,q)
=-n \left(\max\left\{
\frac{1}{2}, \,
\frac{1}{p}, \,
\frac{1}{q}, \,
1-\frac{1}{r}, \, 
\frac{1}{r} -\frac{1}{2}
\right\}\right). 
\end{align*}
Here is an expression of $m_0(p,q)$ that will be easy to see. 
We divide the region of $(1/p,1/q)$ into 5 regions
$J_0,\dots,J_4$ as follows:
\begin{center}
\begin{picture}(160,150)
\thicklines
\put(20,20){\vector(1,0){120}}
\put(20,20){\vector(0,1){120}}
\put(18,120){\line(1,0){4}}
\put(120,18){\line(0,1){4}}
\put(20,70){\line(1,-1){50}}
\put(20,70){\line(1,0){110}}
\put(70,20){\line(0,1){110}}
\put(130,8){$1/p$}
\put(-3,133){$1/q$}
\put(35,30){$J_0$}
\put(53,53){$J_1$}
\put(40,100){$J_2$}
\put(100,40){$J_3$}
\put(100,100){$J_4$}
\thinlines
\put(12,10){{\tiny $0$}}
\put(65,10){{\tiny $1/2$}}
\put(118,10){{\tiny $1$}}
\put(2,68){{\tiny $1/2$}}
\put(12,118){{\tiny $1$}}
\end{picture}
\end{center}
Then 
\[
m_0(p,q)
=
\begin{cases}
\frac{n}{r}-n
&\quad \text{if} \quad
\left(\frac{1}{p},\frac{1}{q}\right) \in J_0; 
\\
-\frac{n}{2}
&\quad \text{if} \quad
\left(\frac{1}{p},\frac{1}{q}\right) \in J_1; 
\\
-\frac{n}{q}
&\quad \text{if} \quad
\left(\frac{1}{p},\frac{1}{q}\right) \in J_2;
\\
-\frac{n}{p}
&\quad \text{if} \quad
\left(\frac{1}{p},\frac{1}{q}\right) \in J_3;
\\
\frac{n}{2}-\frac{n}{r}
&\quad \text{if} \quad
\left(\frac{1}{p},\frac{1}{q}\right) \in J_4,
\end{cases}
\]
where $1/p+1/q=1/r$.

The number $m_{\rho}(p,q)$ 
is the critical order as 
the following proposition shows. 
A proof of this proposition will be given in Appendix of this paper. 
\begin{prop}\label{critical-order}
Let $0 \le \rho <1$, $0<p,q,r \le \infty$, and suppose 
$1/p+1/q=1/r$. 
If $r<\infty$, then 
\[
m_{\rho}(p,q)
=
\sup \{m \in \R \,:\, 
T_{\sigma}: H^p \times H^q \to L^r 
\;\; \text{for all}\;\; \sigma \in BS^{m}_{\rho,\rho}
\}.  
\] 
When $p=q=r=\infty$, the above equality holds 
if we replace 
$H^p \times H^q \to L^r$ by 
$L^{\infty} \times L^{\infty} \to BMO $. 
\end{prop}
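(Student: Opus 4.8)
We sketch a proof. Write $m^{\ast}$ for the supremum in the statement. Since $BS^{m}_{\rho,\rho}\subseteq BS^{m'}_{\rho,\rho}$ whenever $m\le m'$, the set of admissible $m$ is a half-line, so it suffices to prove $m^{\ast}\ge m_{\rho}(p,q)$ and $m^{\ast}\le m_{\rho}(p,q)$. The inequality $m^{\ast}\ge m_{\rho}(p,q)$ is the easy direction: if $m<m_{\rho}(p,q)$, then $BS^{m}_{\rho,\rho}\subseteq BS^{m_{\rho}(p,q)}_{\rho,\rho}$, and so the desired estimate for every such $\sigma$ follows from the critical-order boundedness proved in this paper (together with its analogues for the ranges of $(p,q,r)$ not treated here). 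It remains to show $m^{\ast}\le m_{\rho}(p,q)$, i.e.\ that for every $\varepsilon>0$ there is a symbol in $BS^{m_{\rho}(p,q)+\varepsilon}_{\rho,\rho}$ whose operator is not bounded from $H^{p}\times H^{q}$ to $L^{r}$ (resp.\ from $L^{\infty}\times L^{\infty}$ to $BMO$).

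First we reduce to $\rho=0$. If $\tau\in BS^{\widetilde m}_{0,0}$, then for $N\ge1$ the symbol
\[
\sigma_{N}(x,\xi,\eta):=N^{(1-\rho)\widetilde m}\,\tau\!\bigl(N^{\rho}x,\ N^{-(1-\rho)}\xi,\ N^{-(1-\rho)}\eta\bigr)
\]
lies in $BS^{(1-\rho)\widetilde m}_{\rho,\rho}$ with seminorms bounded uniformly in $N$. Tracking the scaling of the $H^{p},H^{q},L^{r}$ quasi-norms of the corresponding dilated test functions, a genuine counterexample in $BS^{\widetilde m}_{0,0}$ with $\widetilde m>m_{0}(p,q)$ yields, on letting $N\to\infty$ and superposing the $\sigma_{N}$ at lacunary frequency scales into a single symbol, a counterexample in $BS^{(1-\rho)\widetilde m}_{\rho,\rho}$; taking $\widetilde m=(m_{\rho}(p,q)+\varepsilon)/(1-\rho)>m_{0}(p,q)$ gives the desired symbol. (Alternatively one can work directly with the exotic analogues of the model symbols below, e.g.\ multipliers of the form $e^{i(1+|\xi|^{2}+|\eta|^{2})^{(1-\rho)/2}}(1+|\xi|^{2}+|\eta|^{2})^{m/2}\in BS^{m}_{\rho,\rho}$.) Next, in the case $\rho=0$ we cut down the cases by symmetry and duality: the symbol $\sigma^{\vee}(x,\xi,\eta):=\sigma(x,\eta,\xi)$ satisfies $T_{\sigma^{\vee}}(g,f)=T_{\sigma}(f,g)$, exchanging $J_{2}\leftrightarrow J_{3}$, while the transposes of an operator with $BS^{m}_{0,0}$ symbol again have $BS^{m}_{0,0}$ symbols, so that (when $r$ is finite, hence a dual exponent exists) boundedness at $(1/p,1/q,1-1/r)$ is equivalent, up to passing to a transpose, to boundedness at an arbitrary permutation of this triple. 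Since $m_{0}(p,q)=-n\max\{1/2,1/p,1/q,1-1/r\}$ when $r\ge1$, the critical order is invariant under these permutations, and it suffices to treat three cases: (i) the maximum defining $m_{0}$ is $1/2$ (this contains $J_{1}$); (ii) the maximum is one of the index entries, which a permutation lets us take to be $1/p$ with $1/p\ge1/2$ (this handles $J_{2}$, $J_{3}$, and, by duality, $J_{0}$); (iii) $r<1$, i.e.\ $J_{4}$ together with the sub-cases $p<1$ or $q<1$, where duality is unavailable.

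For the constructions themselves ($\rho=0$), one takes the bad symbols to be lacunary superpositions $\sigma=\sum_{l}\sigma_{l}$, each $\sigma_{l}$ supported where $|\xi|+|\eta|\sim2^{l}$ and obeying $|\partial^{\alpha}_{x}\partial^{\beta}_{\xi}\partial^{\gamma}_{\eta}\sigma_{l}|\lesssim2^{lm}$ uniformly (whence $\sigma\in BS^{m}_{0,0}$, the frequency supports being essentially disjoint), and carrying an oscillatory factor: a genuinely bilinear ``wave-type'' phase such as $e^{i(1+|\xi|^{2}+|\eta|^{2})^{1/2}}$, or a random modulation by signs $\varepsilon_{l}$. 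Testing $T_{\sigma}$ against functions $f,g$ adapted to the region at hand --- frequency-localised bumps, or Knapp-type wave packets, whose $H^{p},H^{q}$ norms are computed by Littlewood--Paley theory --- and, when a randomisation is used, choosing the $\varepsilon_{l}$ by Khinchin's inequality, one arranges that
\[
\frac{\|T_{\sigma}(f,g)\|_{L^{r}}}{\|f\|_{H^{p}}\,\|g\|_{H^{q}}}\ \longrightarrow\ \infty
\]
at the rate dictated by $m-m_{0}(p,q)$: in case (i) this forces $m\le-n/2$, in case (ii) it forces $m\le-n/p$, and in case (iii) --- via a direct concentration example exploiting the quasi-triangle inequality of $L^{r}$, $r<1$ --- it forces $m\le n/2-n/r$. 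The endpoint $p=q=r=\infty$ is handled by replacing the $L^{\infty}$ lower bound with a lower bound for the mean oscillation of $T_{\sigma}(f,g)$ over a suitably chosen cube.

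The main obstacle is this last step: carrying out the (degenerate) stationary-phase / Knapp-type analysis of the model symbols and test functions so that the blow-up rate matches $m_{0}(p,q)$ in every region. The construction is necessarily genuinely bilinear, with no linear counterpart: freezing one input variable turns $T_{\sigma}$ into a linear pseudo-differential operator, whose $L^{p}$ boundedness threshold $-n|1/2-1/p|$ lies strictly above $m_{0}(p,q)$ in cases (i) and (ii), so it never sees the critical values $-n/2$ and $-n/p$ --- this is the quantitative shadow of the gap between the linear and bilinear theories noted in the Introduction (recall that $S^{0}_{\rho,\rho}$ is $L^{2}$-bounded by Calder\'on--Vaillancourt, whereas $BS^{0}_{\rho,\rho}$ admits no boundedness at all). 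The $(1-\rho)$-bookkeeping of the dilation step, while technical, is routine by comparison.
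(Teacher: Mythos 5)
The paper's proof is much shorter than yours and takes a different route in both directions, and both directions of your argument have gaps.

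For the direction $m^{\ast}\ge m_{\rho}(p,q)$ (boundedness in the subcritical range $m<m_{\rho}(p,q)$), you simply invoke the nesting $BS^{m}_{\rho,\rho}\subset BS^{m_{\rho}(p,q)}_{\rho,\rho}$ and then appeal to ``the critical-order boundedness proved in this paper (together with its analogues\ldots)''. This is problematic on two counts. First, the paper only proves critical boundedness (Corollary~\ref{main-cor}) in the Banach range $1\le p,q,r\le\infty$, while Proposition~\ref{critical-order} covers all $0<p,q,r\le\infty$; your parenthetical appeals to results that are not available. Second, and more seriously, the subcritical boundedness from Proposition~\ref{critical-order} is itself \emph{used} inside the proof of Theorem~\ref{main-thm-2} (in the estimate of $F^{(1)}$ for $h>1$, the $L^{2}\times L^{2}\to L^{1}$ bound for $m<m_{\rho}(2,2)$ is quoted from the proposition), so deriving the proposition from Corollary~\ref{main-cor} risks circularity. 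The paper avoids both problems: it proves the subcritical case directly by Littlewood--Paley decomposing $\sigma=\sum_j\sigma_j$ on dyadic annuli, rescaling each piece via $\widetilde\sigma_j(x,\xi,\eta)=\sigma_j(2^{-j\rho}x,2^{j\rho}\xi,2^{j\rho}\eta)$ (which preserves $\|T_{\cdot}\|_{H^p\times H^q\to L^r}$ because $1/p+1/q=1/r$), observing that $\widetilde\sigma_j$ has $BS^{m_0}_{0,0}$ seminorms $\lesssim 2^{j(m-(1-\rho)m_0)}$, and then summing the geometric series using only the $\rho=0$ critical result of \cite[Theorem 1.1]{Miyachi-Tomita}, which \emph{is} known in the full range $0<p,q,r\le\infty$.

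For the direction $m^{\ast}\le m_{\rho}(p,q)$ (sharpness), the paper simply cites \cite[Theorem A.2]{Miyachi-Tomita}, where the counterexamples are constructed. Your attempt to reconstruct these examples from scratch is a long heuristic outline rather than a proof: the reduction to $\rho=0$ via the dilation $\sigma_N$ is not rigorous as stated (the claimed uniform seminorm bound only holds if $\tau$ is appropriately frequency-localised, which you do not arrange), and the case analysis by Knapp packets and randomisation stops exactly where the real work begins --- you say so yourself (``the main obstacle is this last step''). The intuition you record (lacunary superpositions, bilinear phases, the contrast with Calder\'on--Vaillancourt) is consistent with what is known, but none of it is carried out. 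In short, what is needed here is either to carry out the constructions in full, or, as the paper does, to cite \cite[Theorem A.2]{Miyachi-Tomita} and spend the effort on the scaling argument for the other direction, which is the genuinely new content of the appendix.
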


It should be an interesting problem 
to prove the boundedness of
bilinear pseudo-differential operators
in the critical class $BS^m_{\rho,\rho}$,
$m=m_{\rho}(p,q)$. 
For the case $\rho=0$, this problem was solved by 
the authors in \cite{Miyachi-Tomita}. 
For the case $0<\rho <1$, 
to the best of the authors' knowledge, 
the only known result for the problem is due to 
Naibo \cite{Naibo}, which however is restricted to 
the case $0<\rho <1/2$ and $p=q=r=\infty$. 
The purpose of the present paper is to solve the problem 
in the range $0 \le 1/p+1/q=1/r \le 1$.

The following are the main results of this paper. 

\begin{thm}\label{main-thm-1}
Let $0 \le \rho <1$
and $m=-(1-\rho)n/2$.
Then all bilinear pseudo-differential operators
with symbols in $BS^{m}_{\rho,\rho}(\R^n)$
are bounded from $L^2(\R^n) \times L^{\infty}(\R^n)$
to $L^2(\R^n)$.
\end{thm}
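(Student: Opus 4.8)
\section*{Proof strategy for Theorem \ref{main-thm-1}}

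The plan is to combine a Littlewood--Paley decomposition with a parabolic rescaling that reduces the single--scale estimate to the already known case $\rho=0$. Fix a Littlewood--Paley partition of unity $\sum_{j\ge 0}\psi_j=1$ on $\R^n\times\R^n$ with $\psi_j$ supported where $|\xi|+|\eta|\sim 2^j$ for $j\ge1$ and $|\xi|+|\eta|\lesssim1$ for $j=0$, and set $\sigma_j=\sigma\psi_j$. First I would establish the uniform single--scale bound
\[
\|T_{\sigma_j}(f,g)\|_{L^2}\le C\,\|f\|_{L^2}\|g\|_{L^\infty},\qquad j\ge0,
\]
with $C$ independent of $j$. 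To see this, apply the rescaling $(x,\xi,\eta)\mapsto(2^{-\rho j}x,2^{\rho j}\xi,2^{\rho j}\eta)$: writing $\sigma_j^{\#}(y,v,w)=\sigma_j(2^{-\rho j}y,2^{\rho j}v,2^{\rho j}w)$ one has $T_{\sigma_j}(f,g)(x)=2^{2\rho jn}T_{\sigma_j^{\#}}(F,G)(2^{\rho j}x)$ with $F=2^{-\rho jn}f(2^{-\rho j}\cdot)$, $G=2^{-\rho jn}g(2^{-\rho j}\cdot)$; the symbol $\sigma_j^{\#}$ has \emph{all} derivatives bounded by $C\,2^{jm}$ and frequency support in $\{|v|+|w|\sim R\}$ with $R=2^{(1-\rho)j}$. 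Since $|v|+|w|\lesssim R$ on that support, the symbol $R^{-n/2}2^{-jm}\sigma_j^{\#}$ lies in $BS^{-n/2}_{0,0}=BS^{m_0(2,\infty)}_{0,0}$ with seminorms bounded uniformly in $j$, so the $\rho=0$ case of the theorem (Theorem \ref{main-thm-1} with $\rho=0$, proved in \cite{Miyachi-Tomita}) gives $\|T_{\sigma_j^{\#}}(F,G)\|_{L^2}\lesssim 2^{jm}R^{n/2}\|F\|_{L^2}\|G\|_{L^\infty}$; tracking the Jacobians and using $m=-(1-\rho)n/2$, i.e. $2^{jm}R^{n/2}=1$, yields the claim.

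The single--scale bound has no decay in $j$, so summing requires extracting orthogonality. I would split $\sigma_j$ (using homogeneous‑degree‑zero cutoffs) according to which of $|\xi|$, $|\eta|$, $|\xi+\eta|$ is comparable to $2^j$. On the pieces where $|\xi+\eta|\sim2^j$ and one of $|\xi|,|\eta|$ is $\sim2^j$, the output of $T_{\sigma_j}$ is frequency localized at scale $2^j$ and one of the inputs only enters through its $j$‑th Littlewood--Paley block; Littlewood--Paley orthogonality of the outputs, the single--scale bound, and $\sum_j\|\Delta_j f\|_{L^2}^2\lesssim\|f\|_{L^2}^2$ (together with $\|\Delta_jg\|_{L^\infty}\lesssim\|g\|_{L^\infty}$) then sum the series. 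The piece where $|\xi|\ll|\eta|\sim|\xi+\eta|\sim2^j$ is handled by passing to the transpose of $T_\sigma$ in the first variable: this transpose has symbol again in $BS^{m}_{\rho,\rho}$ (the symbolic calculus of \cite{BMNT} applies since $\rho<1$), it carries $L^2\times L^\infty\to L^2$ bounds back to $L^2\times L^\infty\to L^2$ bounds, and it sends this region into the diagonal region $|\xi|\sim|\eta|\sim2^j$, $|\xi+\eta|\ll2^j$ of the new symbol.

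The core of the argument is thus the \emph{diagonal region} $|\xi|\sim|\eta|\sim2^j$, $|\xi+\eta|\ll2^j$, where no input or output is localized at a better scale. Here I would perform a second, Fefferman--type, decomposition of the two annuli $\{|\xi|\sim2^j\}$ and $\{|\eta|\sim2^j\}$ into $\sim2^{(1-\rho)nj}$ cubes $Q$ of side $2^{\rho j}$; after rescaling, each cube block is a uniformly controlled ``unit scale'' bilinear symbol, and the diagonal constraint $\xi_Q+\xi_{Q'}\approx0$ forces a single pairing $Q'\leftrightarrow-Q$, so that modulo rapidly decaying error terms
\[
T_{\sigma_j}^{\mathrm{diag}}(f,g)(x)=2^{jm}\sum_{Q}\big(P^{\flat}_{Q}f\big)(x)\,\big(P^{\flat}_{-Q}g\big)(x),
\]
where $P^{\flat}_{Q}$ is a smooth frequency projection onto a dilate of $Q$ (with the slowly varying $x$‑dependent coefficient of the symbol absorbed). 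Estimating by the pointwise Cauchy--Schwarz inequality in $Q$, one puts the $f$‑square function in $L^2$, which is $\lesssim\|\Delta_{\sim j}f\|_{L^2}$ because the cubes tile an annulus, and the $g$‑square function in $L^\infty$; the key point is that
\[
\Big\|\Big(\sum_{Q}|P_{Q}g|^2\Big)^{1/2}\Big\|_{L^\infty}\lesssim\|g\|_{L^\infty}
\]
for cubes of equal size tiling an annulus — which follows by writing $\sum_Q|P_Qg(x_0)|^2\le\|g\|_{L^\infty}\|\sum_Q c_QK_Q\|_{L^1}$ with $c_Q=P_Qg(x_0)$, pulling out the common bump $\kappa$, and applying Parseval on the torus to the trigonometric sum $\sum_Q c_Q e^{i k_Q\cdot(\,\cdot\,)}$, whose frequencies $k_Q$ lie on a lattice after rescaling. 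This gives $\|T^{\mathrm{diag}}_{\sigma_j}(f,g)\|_{L^2}\lesssim 2^{jm}\|\Delta_{\sim j}f\|_{L^2}\|g\|_{L^\infty}$, and since $m<0$,
\[
\Big\|\sum_j T^{\mathrm{diag}}_{\sigma_j}(f,g)\Big\|_{L^2}\lesssim\sum_j 2^{jm}\|\Delta_{\sim j}f\|_{L^2}\|g\|_{L^\infty}\le\Big(\sum_j 2^{2jm}\Big)^{1/2}\Big(\sum_j\|\Delta_{\sim j}f\|_{L^2}^2\Big)^{1/2}\|g\|_{L^\infty}\lesssim\|f\|_{L^2}\|g\|_{L^\infty}.
\]

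I expect the main obstacle to be precisely this diagonal region, and within it two points: first, making rigorous the square--function estimate over Fefferman cubes tiling an annulus together with the Taylor expansion needed to pass from the genuine symbol (only smooth, not constant, on the cubes) to the model form above; and second, treating the ``intermediate'' output frequencies $2^{\rho j}\ll|\xi+\eta|\sim2^{p}\ll2^j$, where each $\xi$‑cube now pairs with $\sim2^{n(p-\rho j)}$ $\eta$‑cubes, so that one must combine the cube estimate (which costs a factor $2^{n(p-\rho j)/2}$) with Littlewood--Paley orthogonality in the output scale $p$ and Cauchy--Schwarz in $j$; the exponents balance because $2^{jm}2^{n(p-\rho j)/2}=2^{np/2}2^{-jn/2}$, and $\sum_{j>p}2^{-jn/2}\|\Delta_{\sim j}f\|_{L^2}\lesssim2^{-pn/2}\|f\|_{L^2}$ absorbs the gain $2^{np/2}$. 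Collecting the diagonal region, the easy regions, and the three transposes then completes the proof.
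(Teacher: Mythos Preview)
Your proposal shares the essential ingredients with the paper's proof: the decomposition of each dyadic piece into cubes of side $2^{j\rho}$ (your ``Fefferman cubes'' are the paper's $\nu$--decomposition), the $\ell^2$ square--function bound $\big\|\big(\sum_Q|P_Qg|^2\big)^{1/2}\big\|_{L^\infty}\lesssim\|g\|_{L^\infty}$ (this is exactly the paper's Lemma~\ref{basic-estimate-0}), and the use of input/output Littlewood--Paley orthogonality to sum in $j$. The single--scale bound via parabolic rescaling to $\rho=0$ is correct and matches what the paper does elsewhere (Section~\ref{section4}, Appendix), though in Section~\ref{section3} the paper proves it directly as Lemma~\ref{sum-on-annulus}.

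There is, however, a genuine gap: you repeatedly use that ``the output of $T_{\sigma_j}$ is frequency localized at scale $2^j$'' (in the easy region) or at scale $2^p$ (in the intermediate region), but for $x$--dependent symbols this is \emph{not} true. The Fourier transform of $T_{\sigma_j}(f,g)$ at $\zeta$ involves $\widehat{\sigma}_j(\zeta-\xi-\eta,\xi,\eta)$, and the $x$--frequency $\tau=\zeta-\xi-\eta$ has no compact support --- only rapid decay for $|\tau|\gg 2^{j\rho}$ coming from the $BS^m_{\rho,\rho}$ estimates. Without this localization, neither your output orthogonality in the easy region nor the orthogonality in the output scale $p$ in the intermediate region is justified. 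The paper deals with this by a further decomposition $\sigma_{j,\nu}=\sum_{k\ge0}\sigma_{j,k,\nu}$ with $\sigma_{j,k,\nu}=\psi_k(2^{-j\rho}D_x)\sigma_{j,\nu}$: for $k\le j(1-\rho)-O(1)$ the output genuinely lives in the predicted annulus, while for larger $k$ the pieces carry an arbitrary factor $2^{-kN}$ (Lemma~\ref{basic-estimate-1}) and sum trivially. You need an analogous device.

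A second, related issue is the transpose step for the region $|\xi|\ll|\eta|\sim|\xi+\eta|\sim2^j$. The symbolic calculus of \cite{BMNT} gives $\sigma^{\ast1}\in BS^m_{\rho,\rho}$, but it does \emph{not} say that the transpose of a symbol supported in that region is supported in the diagonal region --- only the principal term $\sigma(x,-\xi-\eta,\eta)$ of the asymptotic expansion is, and the remainder lies in $BS^{m-(1-\rho)}_{\rho,\rho}$. So to close the argument you must invoke the full expansion and handle the remainder via the subcritical case; as written the step is circular. The paper sidesteps this entirely: it works with just two cones $V_1=\{|\xi+\eta|\gtrsim2^j\}$ and $V_2=\{|\xi|\gtrsim2^j\}$, and in $V_1$ (which contains your problematic region) it decomposes $f=\sum_\ell f_\ell$, counts the cubes $\nu_1$ meeting $\mathrm{supp}\,\widehat{f}_\ell$ (at most $\max\{1,2^{(\ell-j\rho)n}\}$), and sums over $(j,\ell)$ by Schur's lemma using $m=-(1-\rho)n/2$. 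This handles all sizes of $|\xi|$ at once and avoids any transpose.
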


\begin{thm}\label{main-thm-2}
Let $0 \le \rho <1$
and $m=-(1-\rho)n$.
Then all bilinear pseudo-differential operators
with symbols in $BS^{m}_{\rho,\rho}(\R^n)$
are bounded from $L^{\infty}(\R^n) \times L^{\infty}(\R^n)$ 
to $BMO (\R^n)$.
\end{thm}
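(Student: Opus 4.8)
The plan is to prove Theorem \ref{main-thm-2} by duality, reducing it to an $L^1$ estimate tested against $H^1$-atoms. Since $BMO=(H^1)^*$, it suffices (modulo the standard density and local-integrability considerations) to bound
\[
|\langle T_\sigma(f,g),a\rangle|\le C\,\|f\|_{L^\infty}\|g\|_{L^\infty}
\]
for every $L^\infty$-normalized $H^1$-atom $a$, i.e.\ $\operatorname{supp}a\subset Q$, $\|a\|_{L^\infty}\le|Q|^{-1}$, $\int a=0$, with $Q$ a cube of side $\ell(Q)=2^{-k_Q}$. Writing $\langle T_\sigma(f,g),a\rangle=\langle f,T_\tau(a,g)\rangle$, where $\tau$ is the first transposed symbol of $\sigma$ — again a member of $BS^{-(1-\rho)n}_{\rho,\rho}$ by the transposition part of the bilinear symbolic calculus of \cite{BMNT} (admissible since $0\le\rho<1$) — the task becomes
\[
\|T_\tau(a,g)\|_{L^1}\le C\,\|g\|_{L^\infty}\qquad\text{uniformly over atoms }a.
\]

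Next I would perform a Littlewood--Paley decomposition of the symbol in the frequency variables, $\tau=\sum_{k\ge0}\tau_k$ with $\tau_k$ supported in $\{|\xi|+|\eta|\sim2^k\}$ for $k\ge1$ and in a ball for $k=0$; each $\tau_k$ belongs to $BS^{-(1-\rho)n}_{\rho,\rho}$, localized to scale $2^k$, with seminorms uniform in $k$. The key quantitative input is a \emph{renormalized single-scale estimate}: because $2^{k(1-\rho)n/2}\tau_k\in BS^{-(1-\rho)n/2}_{\rho,\rho}$ with uniformly bounded seminorms, Theorem \ref{main-thm-1} gives
\[
\|T_{\tau_k}(u,v)\|_{L^2}\lesssim 2^{-k(1-\rho)n/2}\,\|u\|_{L^2}\,\|v\|_{L^\infty},
\]
and, applying this to the operators $T_{\tau_k}$ together with polynomial spatial weights, a weighted refinement which, with $r_k:=\max\{2^{-k\rho},\ell(Q)\}$ and $S_k$ the frequency projection onto $\{|\zeta|\lesssim2^k\}$, should yield
\[
\|T_{\tau_k}(a,g)\|_{L^1}\lesssim r_k^{\,n/2}\,2^{-k(1-\rho)n/2}\,\|S_ka\|_{L^2}\,\|g\|_{L^\infty}
\]
(recall $T_{\tau_k}(a,g)=T_{\tau_k}(S_ka,g)$ since $\tau_k$ restricts $\xi$ to $|\xi|\lesssim2^k$). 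To sum this over $k$, I would use $\|S_ka\|_{L^2}\le|Q|^{-1/2}$ when $k\ge k_Q$, and the atom's cancellation $|\widehat a(\zeta)|\lesssim|\zeta|\,\ell(Q)$, hence $\|S_ka\|_{L^2}\lesssim 2^{k(1+n/2)}\ell(Q)$, when $k<k_Q$; splitting the high range further at $k=k_Q/\rho$ (where $r_k$ changes from $2^{-k\rho}$ to $\ell(Q)$) and plugging in $m=-(1-\rho)n$, each of the three resulting per-scale bounds is geometrically decaying in $k$, so the sum over $k$ is $\lesssim\|g\|_{L^\infty}$. When $\rho=0$ this is exactly the scheme of \cite{Miyachi-Tomita}.

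The hard part is the weighted single-scale estimate in the ``intermediate'' band of scales $k_Q\le k\le k_Q/\rho$. There the kernel of $T_{\tau_k}$ decays only at the spatial scale $2^{-k\rho}$, which exceeds $\ell(Q)$, so $T_{\tau_k}(a,g)$ genuinely spreads over a set much larger than $Q$, and there are $\sim(1/\rho-1)\log_2(1/\ell(Q))$ such scales; crude size estimates of the resulting spatial tails blow up when summed. The point is that at the \emph{critical} order $m=-(1-\rho)n$ the per-scale $L^1$ contributions (tails included) do decay geometrically across this band — a tight computation, consistent with Proposition \ref{critical-order} — but extracting this requires going beyond kernel-size bounds: one must exploit that, although $\|T_{\tau_k}\|_{L^2\times L^\infty\to L^2}$ is small only by virtue of cancellation, the kernel of $T_{\tau_k}$ is nonetheless \emph{smooth} at the finer scale $2^{-k}$ (the scale at which the output oscillates), which allows a Calder\'on--Zygmund-type weighted $L^2$ estimate against weights of the form $(1+|x-x_Q|/r_k)^N$ that are slowly varying at scale $2^{-k}$. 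The remaining points — the rigorous justification of the duality (density of finite atomic sums, local integrability of $T_\sigma(f,g)$) and of the transposition property of $BS^m_{\rho,\rho}$ for $0\le\rho<1$ — are essentially routine.
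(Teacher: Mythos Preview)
Your route is genuinely different from the paper's. You dualize and test against $H^1$-atoms, reducing to $\|T_\tau(a,g)\|_{L^1}\lesssim\|g\|_{L^\infty}$; the paper works directly with the $BMO$ norm, fixing a cube $Q$, splitting $f$ and $g$ into near and far parts relative to an enlarged cube $\widetilde Q$ of side $\sim\ell(Q)^\rho$ (for $\ell(Q)\le 1$), and estimating the oscillation of each of the four resulting pieces over $Q$. Three of the four are handled by the weighted-kernel $L^2$ estimates \eqref{weightKj-0}--\eqref{weightKj-nabla-z}. The interesting piece --- precisely the one corresponding to your ``intermediate band'' --- is $T_\sigma(f^{(0)},g^{(0)})$ with $f^{(0)},g^{(0)}$ supported in $\widetilde Q$. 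For this the paper does \emph{not} attempt a scale-by-scale summable $L^1$ bound. Instead it records, for each $j$, both $\|T_{\sigma_j}(f^{(0)},g^{(0)})\|_{L^2}\lesssim 2^{-j(1-\rho)n/2}|Q|^{\rho/2}$ (from Theorem~\ref{main-thm-1}) and $\|T_{\sigma_j}(f^{(0)},g^{(0)})\|_{L^\infty}\lesssim 2^{j\rho n}|Q|^\rho$ (from kernel size), and then invokes a real-interpolation lemma (Lemma~\ref{weakLp}) to deduce that the full sum lies in weak $L^{1+1/\rho}$ with $|\{|F^{(1)}|>\lambda\}|\lesssim|Q|\,\lambda^{-1-1/\rho}$, whence its average over $Q$ is $\lesssim 1$. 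This weak-type device sidesteps exactly the summation difficulty you flag.

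Your plan can be pushed through, but the crux --- the weighted single-scale $L^1$ estimate in the intermediate band --- is only asserted, and your heuristic (``kernel smooth at scale $2^{-k}$, weight slowly varying at scale $2^{-k}$'') is not the mechanism that actually delivers it. What you need is
\[
\big\|(1+2^{k\rho}|x-x_Q|)^{M}\,T_{\tau_k}(a,g)\big\|_{L^2}
\ \lesssim\ 2^{-k(1-\rho)n/2}\,\|a\|_{L^2}\,\|g\|_{L^\infty}
\]
for $a$ supported in $Q$ with $\ell(Q)\le 2^{-k\rho}$; then Cauchy--Schwarz against $(1+2^{k\rho}|x-x_Q|)^{-M}$ gives your $L^1$ bound with $r_k=2^{-k\rho}$. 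This weighted $L^2$ estimate follows by iterating the commutator identity
\[
(x-x_Q)_j\,T_{\tau_k}(a,g)=T_{i\partial_{\xi_j}\tau_k}(a,g)+T_{\tau_k}\big((y-x_Q)_j\,a,\,g\big),
\]
together with the fact that $\partial_{\xi_j}\tau_k$ has seminorms bounded by $2^{-k\rho}$ times those of $\tau_k$ (so Theorem~\ref{main-thm-1} applies with the same constant) and $\|(y-x_Q)^{\gamma}a\|_{L^2}\le(2^{-k\rho})^{|\gamma|}\|a\|_{L^2}$ on the support of $a$. The relevant smoothness is thus the $\xi$-smoothness of the symbol at scale $2^{k\rho}$, not kernel smoothness at scale $2^{-k}$. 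With this fix your argument is viable; the paper's weak-type argument is shorter and avoids both the transposition and the commutator iteration.
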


\begin{cor}\label{main-cor}
Let $0 \le \rho <1$, $1 \le p,q,r \le \infty$, 
$1/p+1/q=1/r$, and $m=m_{\rho}(p,q)$. 
Then all bilinear pseudo-differential operators
with symbols in $BS^{m}_{\rho,\rho}(\R^n)$
are bounded from $L^p(\R^n) \times L^{q}(\R^n)$ to $L^r(\R^n)$, 
where $L^p(\R^n)$ (respectively, $L^q(\R^n)$)
should be replaced by $H^p(\R^n)$ (respectively, $H^q(\R^n)$)
if $p=1$ (respectively, $q=1$) 
and $L^r(\R^n)$ should be replaced by $BMO(\R^n)$ if $r=\infty$.
\end{cor}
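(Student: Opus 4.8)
The plan is to deduce Corollary~\ref{main-cor} from Theorems~\ref{main-thm-1} and~\ref{main-thm-2} by first assembling a short list of endpoint estimates and then interpolating. Under the hypotheses $1\le p,q,r\le\infty$, $1/p+1/q=1/r$, the point $(1/p,1/q)$ ranges over the closed triangle $\Delta=\{(s,t):s,t\ge 0,\ s+t\le 1\}$, and the three lines $s+t=1/2$, $s=1/2$, $t=1/2$ cut $\Delta$ into four closed subtriangles whose vertices are among the six points $(0,0)$, $(1/2,0)$, $(0,1/2)$, $(1/2,1/2)$, $(1,0)$, $(0,1)$. On each subtriangle the map $(1/p,1/q)\mapsto m_0(p,q)$ is affine (it coincides there with one of $n/r-n$, $-n/2$, $-n/p$, $-n/q$) and it is continuous across the common edges; this is exactly what makes the critical orders $m_\rho(p,q)=(1-\rho)m_0(p,q)$ compatible with interpolation. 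So it suffices to establish the boundedness asserted in the corollary at each of the six vertices and then interpolate within each subtriangle.

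\textbf{The vertices.} The vertex $(0,0)$ is Theorem~\ref{main-thm-2}, and $(1/2,0)$ is Theorem~\ref{main-thm-1}. The vertex $(0,1/2)$ follows from Theorem~\ref{main-thm-1} and the symmetry $T_\sigma(f,g)=T_{\sigma^{\vee}}(g,f)$ with $\sigma^{\vee}(x,\xi,\eta)=\sigma(x,\eta,\xi)\in BS^{m}_{\rho,\rho}$. The remaining three vertices --- $(1,0)$ (i.e.\ $H^1\times L^\infty\to L^1$ at $m=-(1-\rho)n$), $(0,1)$, and $(1/2,1/2)$ (i.e.\ $L^2\times L^2\to L^1$ at $m=-(1-\rho)n/2$) --- I would obtain by duality. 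By $H^1$--$BMO$ duality, Theorem~\ref{main-thm-2} gives $|\langle T_\sigma(f,g),h\rangle|\le C\,\|f\|_\infty\|g\|_\infty\|h\|_{H^1}$ for $f,g,h\in\calS$; rewriting the pairing as $\langle T^{*1}_\sigma(h,g),f\rangle$ (first transpose) and using that $\calS$ is dense in the space $C_0$ of continuous functions vanishing at infinity, one gets $\|T^{*1}_\sigma(h,g)\|_{L^1}\le C\,\|h\|_{H^1}\|g\|_\infty$. Since $T^{*1}_\sigma=T_{\sigma^{*1}}$ with $\sigma^{*1}\in BS^{m}_{\rho,\rho}$ and $\sigma\mapsto\sigma^{*1}$ is an involution of $BS^{m}_{\rho,\rho}$, every $T_\tau$ with $\tau\in BS^{-(1-\rho)n}_{\rho,\rho}$ is bounded $H^1\times L^\infty\to L^1$, which is the vertex $(1,0)$; the vertex $(0,1)$ is handled the same way (second transpose, or composition with the symmetry above), and $(1/2,1/2)$ likewise from Theorem~\ref{main-thm-1} via $\langle T_\sigma(f,g),h\rangle=\langle T^{*2}_\sigma(f,h),g\rangle$.

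\textbf{The interpolation.} Fix $(1/p,1/q)$ in one of the subtriangles and $\sigma\in BS^{m_\rho(p,q)}_{\rho,\rho}$, and write $(1/p,1/q)$ as a convex combination of the relevant vertices (iterating two-point interpolation when three vertices occur), so that $(p,q,r)$ becomes the $\theta$-interpolate of the endpoint exponents. Bilinear complex interpolation (Calder\'on), together with $[L^{p_0},L^{p_1}]_\theta=L^{p_\theta}$ and the classical Fefferman--Stein identifications for the Hardy scale (e.g.\ $[H^1,L^2]_\theta$ and $[BMO,L^2]_\theta$ being the appropriate Lebesgue spaces), reduces the matter to matching the orders. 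Because $m_0$ is affine on the subtriangle, this is achieved by applying Stein's interpolation theorem for analytic families to $T_{\sigma_z}$ with $\sigma_z=\sigma\,(1+|\xi|+|\eta|)^{\mu z+\nu}$, where $\mu,\nu$ are chosen so that $\sigma_z$ has exactly the order needed at the two endpoints on the lines $\mathrm{Re}\,z\in\{0,1\}$ while $\sigma_\theta=\sigma$; differentiating $(1+|\xi|+|\eta|)^{i\,\mathrm{Im}(\mu z+\nu)}$ in $\xi,\eta$ costs only powers of $|\mathrm{Im}\,z|$, so the $BS$-seminorms of $\sigma_z$ grow polynomially in $|\mathrm{Im}\,z|$, and since the constants in Theorems~\ref{main-thm-1}, \ref{main-thm-2} and their dual forms depend on only finitely many such seminorms, the operator norms on the two vertical lines are admissibly bounded.

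\textbf{Main obstacle.} The delicate point is the transposition step. For $BS^{m}_{\rho,\rho}$ one has $\delta=\rho$, so the usual asymptotic symbolic calculus --- which expresses the transposed symbol modulo terms gaining $(\rho-\delta)|\alpha|$ in order --- degenerates, and the closedness of $BS^{m}_{\rho,\rho}$ under $\sigma\mapsto\sigma^{*1},\sigma^{*2}$ has to be obtained directly from the oscillatory-integral representation of the transposed symbol, exploiting $\rho<1$, rather than quoted from the generic $\delta<\rho$ theory. Once this is in hand, the rest is routine; the only other points needing care are verifying the admissibility hypotheses in Stein's theorem and the density argument used to pass from the estimate of $\langle T^{*j}_\sigma(\cdot,\cdot),g\rangle$ for $g\in\calS$ to the stated $L^1$ bound.
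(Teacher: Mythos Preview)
Your approach is correct and is essentially the paper's own argument: establish the six vertex cases from Theorems~\ref{main-thm-1} and~\ref{main-thm-2} via transposition/duality (the paper uses the two transposes where you use the $\xi\leftrightarrow\eta$ symmetry for one vertex) and then interpolate, the paper simply citing \cite{BBMNT} for the interpolation step rather than writing out the analytic family $\sigma_z$. Your flagged ``main obstacle'' is in fact already settled in the literature: the closedness of $BS^{m}_{\rho,\rho}$, $0\le\rho<1$, under $\sigma\mapsto\sigma^{*1},\sigma^{*2}$ (including the borderline $\delta=\rho$) is precisely \cite[Theorem~2.1]{BMNT}, which the paper invokes without further comment.
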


Here are some comments on the previous works related 
to the above results. 
For the subcritical case $m<m_{\rho}(p,q)$, 
the boundedness \eqref{boundedness-pqr} were 
obtained by 
Michalowski-Rule-Staubach \cite{MRS} (for $(1/p, 1/q)$ in the triangle 
with vertices $(1/2, 1/2)$, $(1/2, 0)$, $(0, 1/2)$) 
and by 
B\'enyi-Bernicot-Maldonado-Naibo-Torres \cite{BBMNT} 
(in the range $1/p+1/q \le 1$). 
As we mentioned above, the 
case $m=m_{\rho}(p,q)$ with $\rho=0$ 
was obtained by the authors \cite{Miyachi-Tomita}. 
In fact, \cite[Theorem 1.1]{Miyachi-Tomita} 
gives a sharper version of the above Corollary \ref{main-cor} for $\rho=0$ 
and covers the full range $0<p,q,r \le \infty$. 
Naibo \cite{Naibo} has proved the claim of Theorem \ref{main-thm-2} 
in the case $0<\rho<1/2$.

Theorem \ref{main-thm-1} should be one of the key estimates
to consider the critical case $m=m_{\rho}(p,q)$ in the 
whole range $0<p,q \le \infty$. 
Here is a comment concerning the method of proof of Theorem \ref{main-thm-1}.  
As we mentioned above this theorem for the case 
$\rho=0$ was already proved in \cite{Miyachi-Tomita}. 
However, the method of the present paper is 
totally different from that of \cite{Miyachi-Tomita}. 
The method of \cite{Miyachi-Tomita} seems to work only 
in the case $\rho=0$, but the method of the present paper 
covers all $0\le \rho <1$.

The contents of this paper are as follows.
In Section \ref{section2},
we recall some preliminary facts.
In Sections \ref{section3}, \ref{section4} and \ref{section5},
we prove Theorems \ref{main-thm-1}, \ref{main-thm-2}
and Corollary \ref{main-cor}, respectively.
In Appendix \ref{appendix}, we prove Proposition \ref{critical-order}.

\section{Preliminaries}\label{section2}

For two nonnegative quantities $A$ and $B$,
the notation $A \lesssim B$ means that
$A \le CB$ for some unspecified constant $C>0$,
and $A \approx B$ means that
$A \lesssim B$ and $B \lesssim A$.
We denote by $\ichi_S$ the characteristic function of a set $S$,
and by $|S|$ the Lebesgue measure of a measurable set $S$ in $\R^n$.

Let $\calS(\R^n)$ and $\calS'(\R^n)$ be the Schwartz spaces of
all rapidly decreasing smooth functions
and tempered distributions,
respectively.
We define the Fourier transform $\calF f$
and the inverse Fourier transform $\calF^{-1}f$
of $f \in \calS(\R^n)$ by
\[
\calF f(\xi)
=\widehat{f}(\xi)
=\int_{\R^n}e^{-ix\cdot\xi} f(x)\, dx
\quad \text{and} \quad
\calF^{-1}f(x)
=\frac{1}{(2\pi)^n}
\int_{\R^n}e^{ix\cdot \xi} f(\xi)\, d\xi.
\]
For $m \in L^{\infty}(\R^n)$,
the linear Fourier multiplier operator $m(D)$
is defined by
\[
m(D)f(x)
=\calF^{-1}[m\widehat{f}](x)
=\frac{1}{(2\pi)^n}\int_{\R^n}
e^{ix\cdot\xi}m(\xi)\widehat{f}(\xi)\, d\xi,
\quad
f \in \calS(\R^n).
\]

We recall the definition 
of Hardy spaces and the space $BMO$ on $\R^n$
(see \cite[Chapters 3 and 4]{Stein}).
Let $0 < p \le \infty$, and let $\phi \in \calS(\R^n)$ be such that
$\int_{\R^n}\phi(x)\, dx \neq 0$. 
Then the Hardy space $H^p(\R^n)$ consists of
all $f \in \calS'(\R^n)$ such that
\[
\|f\|_{H^p}=\left\|\sup_{0<t<\infty}|\phi_t*f|\right\|_{L^p}<\infty,
\]
where $\phi_t(x)=t^{-n}\phi(x/t)$.
It is known that $H^p(\R^n)$ does not depend 
on the choice of the function $\phi$
and $H^p(\R^n)=L^p(\R^n)$ for $1<p \le \infty$.
The space $BMO(\R^n)$ consists of
all locally integrable functions $f$ on $\R^n$ such that
\[
\|f\|_{BMO}
=\sup_{Q}\frac{1}{|Q|}
\int_{Q}|f(x)-f_Q|\, dx<\infty, 
\]
where $f_Q$ is the average of $f$ on $Q$
and the supremum is taken over all cubes $Q$ in $\R^n$.
It is known that the dual spaces of
$H^1(\R^n)$ is $BMO(\R^n)$.

We end this section by quoting the following, 
which we shall call {\it Schur's lemma}\/. 
For a proof, see, e.g., \cite[Appendix I]{Grafakos-1}.

\begin{lem}[Schur's lemma]
Let $\{A_{j,k}\}_{j,k \ge 0}$ be a sequence of nonnegative numbers 
satisfying
\[
\sup_{j \ge 0}\sum_{k \ge 0}A_{j,k}\le 1 
\quad \text{and}\quad 
\sup_{k \ge 0}\sum_{j \ge 0}A_{j,k}
\le 1.
\]
Then 
\[
\sum_{j,k \ge 0}A_{j,k}b_j c_k
\le \left(\sum_{j \ge 0}b_j^2\right)^{1/2}
\left(\sum_{k \ge 0}c_k^2\right)^{1/2}
\]
for all nonnegative sequences $\{b_j\}$ and $\{c_k\}$.  
\end{lem}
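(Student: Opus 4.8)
The plan is to estimate the trilinear sum $S = \sum_{j,k \ge 0} A_{j,k} b_j c_k$ by splitting each term symmetrically and applying the Cauchy--Schwarz inequality for sums. First I would write
\[
A_{j,k} b_j c_k
= \left(A_{j,k}^{1/2} b_j\right)\left(A_{j,k}^{1/2} c_k\right),
\]
which is legitimate since all the $A_{j,k}$ are nonnegative. Applying the Cauchy--Schwarz inequality to the double sum over the pair $(j,k)$ then gives
\[
S \le \left(\sum_{j,k \ge 0} A_{j,k} b_j^2\right)^{1/2}
\left(\sum_{j,k \ge 0} A_{j,k} c_k^2\right)^{1/2}.
\]

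Next I would handle the two factors separately using the two hypotheses. For the first factor, summing in $k$ first and using $\sup_{j}\sum_{k} A_{j,k} \le 1$ yields
\[
\sum_{j,k \ge 0} A_{j,k} b_j^2
= \sum_{j \ge 0} b_j^2 \left(\sum_{k \ge 0} A_{j,k}\right)
\le \sum_{j \ge 0} b_j^2 .
\]
Symmetrically, summing in $j$ first and using $\sup_{k}\sum_{j} A_{j,k} \le 1$ gives $\sum_{j,k} A_{j,k} c_k^2 \le \sum_{k} c_k^2$. Substituting these two bounds into the displayed Cauchy--Schwarz estimate produces exactly
\[
S \le \left(\sum_{j \ge 0} b_j^2\right)^{1/2}\left(\sum_{k \ge 0} c_k^2\right)^{1/2},
\]
which is the claim.

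There is essentially no obstacle here; the only point requiring a word of care is that all sums may a priori be $+\infty$, so the manipulations should be read as inequalities in $[0,+\infty]$ (Tonelli's theorem for nonnegative terms justifies interchanging the order of summation in the double sums, and Cauchy--Schwarz for sequences is valid without any convergence assumption). In particular, if either $\sum_j b_j^2$ or $\sum_k c_k^2$ is infinite the inequality is trivial, and otherwise every intermediate quantity is finite and the chain of estimates is rigorous as stated.
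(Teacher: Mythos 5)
Your proof is correct and is the standard argument for Schur's lemma. The paper does not supply its own proof but refers to \cite[Appendix I]{Grafakos-1}, where the same splitting $A_{j,k}b_jc_k=(A_{j,k}^{1/2}b_j)(A_{j,k}^{1/2}c_k)$ followed by Cauchy--Schwarz and the two row/column-sum hypotheses is used; so your approach is essentially the one the authors had in mind.
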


\section{Proof of Theorem \ref{main-thm-1}}\label{section3}

In this section,
we shall prove Theorem \ref{main-thm-1}.
The argument is divided into three subsections. 
Although the proof for general $\sigma \in BS^{m}_{\rho,\rho}$ 
is somewhat complicated, 
the main idea already consists in the special 
case that $\sigma (x,\xi,\eta)$ is independent of $x$, 
namely the bilinear Fourier multiplier case. 
In this case, 
$\sigma_{j,k,\nu}$ to be introduced 
in Subsection \ref{subsection-decomposition} reduces to 
\[
\sigma_{j,k,\nu}
=
\left\{
\begin{array}{ll}
{\sigma_{j,\nu}} & {\qquad\text{if}\quad k=0}
\\ 
{0} & {\qquad\text{if}\quad k\ge 1,}
\end{array}
\right.
\]
and the argument will be simple.

We use the following notation and terminology. 
For a finite set $\Lambda$, we write $|\Lambda|$ to denote 
the number of elements of $\Lambda$. 
The following are cubes in $\R^n$: 
\begin{align*}
&Q=[-1,1]^{n}, \quad 
aQ=[-a,a]^{n}, \quad a>0, 
\\
&
x+aQ=\{x+ y \,:\, y\in aQ\}, 
\quad x\in \R^n.
\end{align*} 
If $\sigma$ is 
a function on $\R^n \times \R^n \times \R^n$, 
then 
\[
\suppast \sigma=\;\text{closure of}\;
\{(\xi, \eta)\in \R^n \times \R^n 
\,:\, 
\sigma (x,\xi,\eta) \neq 0 \;\;\text{for some}\;\; x\in \R^n\}. 
\]
The usual inner product of $f,h \in L^2=L^2 (\R^n)$ is denoted by 
$\langle f, g\rangle$. 
If $\{E_{\alpha}\}$ is a finite family of subsets of $\R^n$, 
$L$ is a positive integer, and if 
\[
|\{\beta \,:\, E_{\beta} \cap E_{\alpha} 
\neq \emptyset \}|\le L
\quad \text{for all}\quad \alpha, 
\]
then 
we say that the {\it interaction} 
of the family $\{E_{\alpha}\}$ is bounded by $L$.

\subsection{Decomposition of the symbol and some preliminaries}
\label{subsection-decomposition}

We use the following two types of partitions of unity.
One is the dyadic decomposition:
\begin{equation}\label{dyadic-decomposition}
\begin{split}
&\mathrm{supp}\, \psi_0
\subset \{\zeta \in \R^d \,:\, |\zeta| \le 2\},
\\
&\mathrm{supp}\, \psi_j
\subset \{\zeta \in \R^d \,:\, 2^{j-1} \le |\zeta| \le 2^{j+1}\},
\quad j \ge 1,
\\
&\|\partial^{\alpha}\psi_j\|_{L^{\infty}} \lesssim 2^{-j|\alpha|},
\quad \alpha \in \N_0^d, \ j \ge 0,
\\
&\sum_{j \ge 0}\psi_j(\zeta)=1,
\quad \zeta \in \R^d.
\end{split}
\end{equation}
The other is the uniform decomposition:
\begin{equation}\label{uniform-decomposition}
\begin{split}
&\mathrm{supp}\, \varphi \subset Q,
\\
&\sum_{\nu \in \Z^n}\varphi(\xi-\nu)=1,
\quad \xi \in \R^n.
\end{split}
\end{equation}
Here $\psi_j$, $j \ge 0$, and $\varphi$
are smooth real-valued functions. 
We shall use \eqref{dyadic-decomposition} 
with $d=2n$ and $d=n$. 
We write $\Psi_j$ 
to denote the function $\psi_j$ of 
\eqref{dyadic-decomposition} with $d=2n$ 
and write $\psi_j$ to denote the function 
of \eqref{dyadic-decomposition} 
with $d=n$. 
We shall use \eqref{uniform-decomposition} only on $\R^n$.

In this subsection, we assume 
$\sigma \in BS^{m}_{\rho,\rho}$ with $m\in \R$ and $0\le \rho \le 1$.  
(The conditions on $m$ and $\rho$ 
as in Theorem \ref{main-thm-1} 
are not necessary in this subsection.)
We decompose $\sigma$ as
\begin{equation}\label{symbol-dec-0}
\sigma(x,\xi,\eta)=
\sum_{j \ge 0}
\sum_{\nu=(\nu_1,\nu_2) \in \Z^n \times \Z^n}
\sigma_{j,\nu}(x,\xi,\eta)
=
\sum_{j \ge 0}\sum_{k \ge 0}
\sum_{\nu=(\nu_1,\nu_2) \in \Z^n \times \Z^n}
\sigma_{j,k,\nu}(x,\xi,\eta), 
\end{equation}
where 
\begin{equation}\label{def-sigmajn}
\sigma_{j,\nu}(x,\xi,\eta)
=\sigma(x,\xi,\eta)
\varphi(2^{-j\rho}\xi-\nu_1)\varphi(2^{-j\rho}\eta-\nu_2)
\Psi_j(\xi,\eta)
\end{equation}
and 
\begin{equation}\label{def-sigmajkn}
\begin{split}
\sigma_{j,k,\nu}(x,\xi,\eta)
&=[\psi_k(2^{-j\rho}D_x)\sigma_{j,\nu}](x,\xi,\eta)
\\
&=
2^{j\rho n}
\int_{\R^n}[\calF^{-1}\psi_k](2^{j\rho}y)
\sigma_{j,\nu}(x-y,\xi,\eta)\, dy. 
\end{split}
\end{equation}

Notice the following facts.  
First, 
if we write the projections as 
\[
\pi_1 (\xi, \eta)=\xi, 
\quad 
\pi_2 (\xi, \eta)=\eta, 
\] 
then it is obvious that 
\begin{equation}\label{replace-f1-g1}
T_{\sigma_{j,k,\nu}} (f,g)
=
T_{\sigma_{j,k,\nu}} (f^{(1)},g)
=T_{\sigma_{j,k,\nu}} (f,g^{(1)}) 
\end{equation}
whenever $f^{(1)}$ and $g^{(1)}$ satisfy 
$(f^{(1)})^{\wedge}= \widehat{f}$ on 
$\pi_1 (\suppast (\sigma_{j,\nu}))$ 
and $(g^{(1)})^{\wedge}= \widehat{g}$ on 
$\pi_2 (\suppast (\sigma_{j,\nu}))$.  
Secondly, the Fourier transform of 
$T_{\sigma_{j,k,\nu}}(f,g)$ is given by
\begin{equation}\label{Fourier-Tsjkn}
\begin{split}
&\calF [T_{\sigma_{j,k,\nu}}(f,g)] (\zeta) 
\\
&=
\frac{1}{(2\pi)^{2n}}
\int_{\R^n \times \R^n}
\psi_{k}(2^{-j\rho}(\zeta-\xi-\eta))
[\calF_x\sigma_{j,\nu}](\zeta-\xi-\eta,\xi,\eta)
\widehat{f}(\xi)\widehat{g}(\eta)\, d\xi d\eta,
\end{split}
\end{equation}
where $\zeta \in \R^n$ 
and $\calF_x\sigma_{j,\nu}$ denotes 
the partial Fourier transform
of $\sigma_{j,\nu}(x,\xi,\eta)$ with 
respect to the $x$-variable. 
From this we see that 
\begin{equation}\label{FouriersuppTsjkn}
\supp \calF [T_{\sigma_{j,k,\nu}}(f,g)] 
\subset 
\bigcup_{(\xi, \eta)\in \suppast (\sigma_{j,\nu})}
\{ 
\zeta 
\,:\, 
|\zeta - \xi - \eta|
\le 2^{j \rho + k+1}
\}. 
\end{equation}
Hence, we have  
\begin{equation}\label{replace-h1}
\big\langle T_{\sigma_{j,k,\nu}} (f,g), h \big\rangle 
=
\big\langle T_{\sigma_{j,k,\nu}} (f,g), h^{(1)} \big\rangle 
\end{equation}
whenever $h^{(1)}$ satisfy $(h^{(1)})^{\wedge}= \widehat{h}$ 
on the set on the right-hand side of \eqref{FouriersuppTsjkn}. 
In the argument to follow, we shall use 
\eqref{replace-f1-g1} and \eqref{replace-h1} 
by choosing the functions $f^{(1)}$, $g^{(1)}$, $h^{(1)}$ according to  
several different situations.

We also use the following general lemma 
for nearly orthogonal functions and operators. 
\begin{lem}\label{orthogonality}
{\rm (1)} If $\{f_{\alpha}\}$ is a finite family of functions in $L^2$, 
$L$ is a positive integer, 
and if $|\{\beta \,:\, \langle f_{\beta}, f_{\alpha}\rangle \neq 0\}|
\le L$ 
for all $\alpha$, then 
$\|\sum_{\alpha} f_{\alpha}\|_{L^2}^2 
\le L \sum_{\alpha} \|f_{\alpha}\|_{L^2}^2$. 
\\
{\rm (2)} If $\{T_{\alpha}\}$ is a finite family of 
bounded linear operators in $L^2$, 
$L$ is a positive integer, 
and if $| \{\beta \,:\, T_{\beta}^{\ast} T_{\alpha} \neq 0\} |
\le L$ 
for all $\alpha$, then 
$\|\sum_{\alpha} T_{\alpha}\|_{L^2\to L^2}^2 
\le L \sum_{\alpha} \|T_{\alpha}\|_{L^2\to L^2}^2$. 
\\
{\rm (3)} If $\{T_{\alpha}\}$ is a finite family of 
bounded linear operators in $L^2$, 
$L$ is a positive integer, 
and if $|\{\beta \,:\, T_{\beta} T_{\alpha}^{\ast} \neq 0\} |
\le L$ 
for all $\alpha$, then 
$\|\sum_{\alpha} T_{\alpha}\|_{L^2\to L^2}^2 
\le L \sum_{\alpha} \|T_{\alpha}\|_{L^2\to L^2}^2$. 
\end{lem}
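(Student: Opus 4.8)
The plan is to deduce all three statements from Schur's lemma quoted above, handling (1) and (2) by expanding the relevant $L^2$ inner product and reducing (3) to (2) by duality. After enumerating the (finite) index sets and extending the matrices by zeros, Schur's lemma applies to any nonnegative matrix as soon as both its row sums and its column sums are bounded by a common constant, so the whole argument amounts to checking that the relevant overlap relations are symmetric.

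For part (1), I would begin from $\|\sum_\alpha f_\alpha\|_{L^2}^2 = \sum_{\alpha,\beta}\langle f_\alpha,f_\beta\rangle \le \sum_{\alpha,\beta}|\langle f_\alpha,f_\beta\rangle|$ and discard every pair with $\langle f_\alpha,f_\beta\rangle = 0$. The key point is that the relation ``$\langle f_\alpha,f_\beta\rangle \neq 0$'' is symmetric in $\alpha$ and $\beta$, since $\langle f_\beta,f_\alpha\rangle$ is the complex conjugate of $\langle f_\alpha,f_\beta\rangle$; hence the hypothesis $|\{\beta : \langle f_\beta,f_\alpha\rangle \neq 0\}| \le L$ bounds by $L$ both the row sums and the column sums of the $0$-$1$ matrix $A_{\alpha,\beta} = \ichi[\langle f_\alpha,f_\beta\rangle \neq 0]$. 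Estimating $|\langle f_\alpha,f_\beta\rangle| \le \|f_\alpha\|_{L^2}\|f_\beta\|_{L^2}$ and applying Schur's lemma to $A_{\alpha,\beta}/L$ with $b_\alpha = \|f_\alpha\|_{L^2}$ and $c_\beta = \|f_\beta\|_{L^2}$ then yields $\|\sum_\alpha f_\alpha\|_{L^2}^2 \le L\sum_\alpha \|f_\alpha\|_{L^2}^2$.

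For part (2), I would fix $f \in L^2$ with $\|f\|_{L^2} = 1$ and run the same argument on the family $\{T_\alpha f\}$: since $\langle T_\alpha f, T_\beta f\rangle = \langle T_\beta^{\ast}T_\alpha f, f\rangle$, this term vanishes whenever $T_\beta^{\ast}T_\alpha = 0$, and the relation ``$T_\beta^{\ast}T_\alpha \neq 0$'' is again symmetric because its adjoint is $T_\alpha^{\ast}T_\beta$. Thus the hypothesis controls the row and column sums of the corresponding $0$-$1$ matrix by $L$, and Schur's lemma applied with $b_\alpha = \|T_\alpha f\|_{L^2}$, $c_\beta = \|T_\beta f\|_{L^2}$ gives $\|\sum_\alpha T_\alpha f\|_{L^2}^2 \le L\sum_\alpha \|T_\alpha f\|_{L^2}^2 \le L\sum_\alpha \|T_\alpha\|_{L^2\to L^2}^2$; taking the supremum over such $f$ finishes the proof. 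Part (3) then follows by passing to adjoints: $\|\sum_\alpha T_\alpha\|_{L^2\to L^2} = \|\sum_\alpha T_\alpha^{\ast}\|_{L^2\to L^2}$, the condition $|\{\beta : T_\beta T_\alpha^{\ast} \neq 0\}| \le L$ is exactly the hypothesis of part (2) for the family $\{T_\alpha^{\ast}\}$ (since $(T_\beta^{\ast})^{\ast}T_\alpha^{\ast} = T_\beta T_\alpha^{\ast}$), and $\|T_\alpha^{\ast}\|_{L^2\to L^2} = \|T_\alpha\|_{L^2\to L^2}$.

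Every step here is elementary, so there is no genuine obstacle; the only care needed is bookkeeping — verifying that each overlap relation is symmetric so that Schur's lemma can be invoked with a single bound $L$ on both row and column sums, and keeping in mind that this produces the factor $L$ (rather than $L^{1/2}$) in the conclusion.
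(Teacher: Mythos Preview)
Your proof is correct and follows essentially the same route as the paper: expand the square in (1), bound each nonzero term by Cauchy--Schwarz, and apply Schur's lemma to the $0$-$1$ overlap matrix; deduce (2) by applying (1) to $f_{\alpha}=T_{\alpha}f$; and obtain (3) from (2) by passing to adjoints. The only cosmetic difference is that you spell out the symmetry of the overlap relations explicitly, which the paper leaves implicit.
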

\begin{proof} 
To prove (1), we write 
\begin{equation*}
\bigg\|\sum_{\alpha} f_{\alpha}\bigg\|_{L^2}^2 
=
\sum_{\alpha}\sum_{\beta}
\langle f_{\alpha}, f_{\beta} \rangle 
\le 
\sum_{\alpha}\sum_{\beta}
\ichi \{
\langle f_{\alpha}, f_{\beta} \rangle 
\neq 0 \} 
\|f_{\alpha}\|_{L^2}
\|f_{\beta}\|_{L^2}. 
\end{equation*}
Applying Schur's lemma, we obtain the desired inequality. 
We can prove (2) by applying (1) to $f_{\alpha}= T_{\alpha}f$. 
The assertion (3) follows from (2) 
since the norms of an operator and its adjoint are the same. 
\end{proof}

\subsection{Basic estimates}
\label{subsection-basic-estimates}
In this subsection, except in the last lemma, Lemma 
\ref{sum-on-annulus}, we only assume 
$\sigma \in BS^{m}_{\rho,\rho}$ with $m\in \R$ and $0\le \rho \le 1$.

We shall give some basic estimates which will be used later. 
We use the following notation 
\[
S_{a}(f) (x) 
=a^{n} \int_{\R^n} 
\frac{|f(y)|}{(1+ a |x-y|)^{n+1}}\, dy, 
\quad a>0, \;\; x\in \R^{n}. 
\]

Let us start with the estimate for the square function of
$\widetilde{\varphi}(2^{-j\rho}D-\ell)f$
with respect to $\ell \in \Z^n$. 
Although this is known to many people,
we shall give the proof for the reader's convenience.
\begin{lem}\label{basic-estimate-0}
Let $\widetilde{\varphi} \in \calS(\R^n)$.
Then 
\[
\bigg(\sum_{\ell \in \Z^n}
|\widetilde{\varphi}(2^{-j\rho}D-\ell)f(x)|^2\bigg)^{1/2}
\lesssim
S_{2^{j\rho}}(f^2)(x)^{1/2}
\]
holds for $j \ge 0$ and $x\in \R^n$. 
\end{lem}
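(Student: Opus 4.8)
The plan is to reduce everything to the scalar decomposition $\widetilde{\varphi}(2^{-j\rho}D-\ell)f(x) = (\calF^{-1}[\widetilde{\varphi}(\cdot-\ell)] * f)(x)$ and to exploit that $\calF^{-1}[\widetilde{\varphi}(\cdot-\ell)](z) = e^{i\ell\cdot z}\,\calF^{-1}\widetilde{\varphi}(z)$, so the kernels all have the same modulus. Writing $a = 2^{j\rho}$ and $K = \calF^{-1}\widetilde{\varphi}$, we have
\[
\widetilde{\varphi}(2^{-j\rho}D-\ell)f(x)
= a^n \int_{\R^n} e^{i\ell\cdot a(x-y)}\,K(a(x-y))\,f(y)\,dy.
\]
First I would fix $x$ and freeze the inner integral over a unit-scale partition: split $\R^n$ into the cubes $m + a^{-1}Q'$, $m\in a^{-1}\Z^n$ — more precisely use the lattice $2^{-j\rho}\Z^n$ — so that on each such cube the phase $e^{i\ell\cdot a(x-y)}$ is, up to a controlled factor, $e^{i\ell\cdot a(x-y_m)}$ for the cube center $y_m$. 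Then the sum over $\ell\in\Z^n$ of $|\widetilde{\varphi}(2^{-j\rho}D-\ell)f(x)|^2$ becomes a sum over $\ell$ of $|\sum_m c_m(x)\,e^{i\ell\cdot a(x-y_m)}|^2$ with suitable coefficients $c_m(x)$, and since $\{e^{i\ell\cdot a y_m}\}_\ell$ is an orthonormal family on the torus, Plancherel on $(\R/2\pi a^{-1}\Z)^n$ (equivalently, Bessel's inequality for the exponentials) collapses the $\ell$-sum to $\sum_m |c_m(x)|^2$.

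The key steps in order: (i) decompose the convolution integral dyadically in $|x-y|$, or rather into the unit-lattice pieces described above, using the Schwartz decay $|K(z)|\lesssim (1+|z|)^{-N}$ for $N$ as large as needed; (ii) on each lattice cube, bound $|c_m(x)|$ by $a^n\int_{y_m+2^{-j\rho}Q'}|f(y)|\,dy$ times the size of $K$ near $a(x-y_m)$, i.e. by $\lesssim (1+a|x-y_m|)^{-N}\,a^n\int_{y_m+2^{-j\rho}Q'}|f(y)|\,dy$; (iii) apply Cauchy–Schwarz inside each cube to get $\big(a^n\int|f|\big)^2 \lesssim a^n\int |f|^2$ over that cube; (iv) sum over $m$, recognizing $\sum_m (1+a|x-y_m|)^{-N} a^n\int_{y_m + 2^{-j\rho}Q'}|f(y)|^2\,dy$ as comparable to $a^n\int_{\R^n}(1+a|x-y|)^{-(n+1)}|f(y)|^2\,dy = S_{2^{j\rho}}(f^2)(x)$, after choosing $N \ge n+1$ and absorbing the harmless shift between $y$ and $y_m$ within a cube into the weight (here one uses that $1+a|x-y_m|\approx 1+a|x-y|$ for $y$ in the cube containing $y_m$). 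Taking square roots gives the claim.

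I expect the only genuine subtlety to be step (i)/(ii): making the phase-freezing rigorous so that after replacing $e^{i\ell\cdot a(x-y)}$ by $e^{i\ell\cdot a(x-y_m)}$ the error is still summable in $\ell$ uniformly. The clean way to handle this, and the route I would actually take, is to avoid freezing altogether and instead observe that for each fixed $x$ the function $y\mapsto a^n K(a(x-y)) f(y)$ is compactly supported after the dyadic cutoff and its restriction to one period can be expanded in the Fourier series $\sum_\ell e^{i\ell\cdot a(x-y)}$-conjugate basis; then $\widetilde{\varphi}(2^{-j\rho}D-\ell)f(x)$ is exactly (a constant times) the $\ell$-th Fourier coefficient of a periodization of $y\mapsto a^nK(a(x-y))f(y)$, so $\sum_\ell |\widetilde{\varphi}(2^{-j\rho}D-\ell)f(x)|^2$ equals, by Parseval on the torus, $a^n\int_{\text{period}}\big|\sum_{p\in 2^{-j\rho}\Z^n} a^n K(a(x-y-p)) f(y+p)\big|^2\,dy$; then Cauchy–Schwarz in the inner $p$-sum against the summable weight $\sum_p(1+a|x-y-p|)^{-(n+1)}<\infty$ and interchanging gives exactly $S_{2^{j\rho}}(f^2)(x)$. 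Everything else is routine Schwartz-tail bookkeeping.
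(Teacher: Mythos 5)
Your final argument is correct and is essentially the paper's proof: recognize $\widetilde{\varphi}(2^{-j\rho}D-\ell)f(x)$ as a Fourier coefficient of the periodization of $y\mapsto a^n\widetilde{\Phi}(a(x-y))f(y)$, apply Parseval on the torus, and then use Cauchy--Schwarz against the summable weight coming from the Schwartz decay of $\widetilde{\Phi}=\calF^{-1}\widetilde{\varphi}$. The paper simply rescales to $j=0$ first (so $a=1$) and periodizes over $(2\pi\Z)^n$, whereas you keep the scale $a=2^{j\rho}$ throughout --- a cosmetic difference --- and your preliminary ``phase-freezing'' detour is superseded by the periodization argument you correctly settle on, though you should be careful that the periodization lattice is $2\pi a^{-1}\Z^n$, not $a^{-1}\Z^n$, and track the normalizing powers of $a$ if you want exact constants.
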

\begin{proof}
Since $\widetilde{\varphi}(2^{-j\rho}D-\ell)f(x)
=\widetilde{\varphi}(D-\ell)[f(2^{-j\rho}\, \cdot \,)](2^{j\rho}x)$,
by a scaling argument,
it is sufficient to prove the case $j=0$.
By a periodization technique,
we can write
\begin{align*}
\widetilde{\varphi}(D-\ell)f(x)
&=\int_{\R^n}e^{i\ell\cdot (x-y)}
\widetilde{\Phi}(x-y)f(y)\, dy
\\
&=\sum_{\widetilde{\ell} \in \Z^n}
\int_{2\pi\widetilde{\ell}+[-\pi,\pi]^n}e^{i\ell\cdot (x-y)}
\widetilde{\Phi}(x-y)f(y)\, dy
\\
&=e^{i \ell \cdot x}
\int_{[-\pi,\pi]^n}e^{-i\ell\cdot y}
\bigg(\sum_{\widetilde{\ell} \in \Z^n}
\widetilde{\Phi}(x-y-2\pi\widetilde{\ell})
f(y+2\pi\widetilde{\ell})\bigg) dy,
\end{align*}
where $\widetilde{\Phi}=\calF^{-1}\widetilde{\varphi}$.
This means that
$|\widetilde{\varphi}(D-\ell)f(x)|$
is equal to $(2 \pi )^n$ times the absolute value of the $\ell$-th Fourier coefficient
of the $(2\pi \Z)^n$-periodic function
$\sum_{\widetilde{\ell} \in \Z^n}
\widetilde{\Phi}(x-y-2\pi\widetilde{\ell})f(y+2\pi\widetilde{\ell})$
of the $y$-variable.
Hence, it follows from Parseval's identity that
\begin{align*}
\sum_{\ell \in \Z^n}
|\widetilde{\varphi}(D-\ell)f(x)|^2
&={(2\pi)^n}
\int_{[-\pi,\pi]^n}
\bigg|\sum_{\widetilde{\ell} \in \Z^n}
\widetilde{\Phi}(x-y-2\pi\widetilde{\ell})
f(y+2\pi\widetilde{\ell})\bigg|^2 dy.
\end{align*}
Since
$\sup_{z \in \R^n}\bigg(\sum_{\widetilde{\ell} \in \Z^n}|
\widetilde{\Phi}(z-2\pi\widetilde{\ell})|\bigg)<\infty$,
by Schwarz's inequality,
the right-hand side of this identity is estimated by
\begin{align*}
\int_{[-\pi,\pi]^n}
\sum_{\widetilde{\ell} \in \Z^n}
|\widetilde{\Phi}(x-y-2\pi\widetilde{\ell})|
|f(y+2\pi\widetilde{\ell})|^2\, dy
=\int_{\R^n}|\widetilde{\Phi}(x-y)||f(y)|^2\, dy.
\end{align*}
Therefore, the rapidly decreasing property of $\widetilde{\Phi}$
gives the desired estimate.
\end{proof}

\begin{lem}\label{basic-estimate-1}
For each $N \in \N_0$ and $\beta,\gamma \in \N_0^n$,
the estimate
\[
|\partial^{\beta}_{\xi}\partial^{\gamma}_{\eta}
\sigma_{j,k,\nu}(x,\xi,\eta)|
\lesssim
2^{jm-kN}2^{-j\rho(|\beta|+|\gamma|)}
\]
holds for $j,k \ge 0$ and $\nu \in \Z^n \times \Z^n$.
\end{lem}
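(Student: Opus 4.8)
The plan is to bound first the frequency-localized pieces $\sigma_{j,\nu}$ together with \emph{all} of their $x$-derivatives, and then to extract the gain $2^{-kN}$ from the smoothing effect of $\psi_k(2^{-j\rho}D_x)$.

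\emph{Step 1: estimate $\sigma_{j,\nu}$ and its derivatives.} I would prove that
\[
|\partial^{\alpha}_x\partial^{\beta}_{\xi}\partial^{\gamma}_{\eta}\sigma_{j,\nu}(x,\xi,\eta)|
\lesssim 2^{jm}\,2^{j\rho|\alpha|}\,2^{-j\rho(|\beta|+|\gamma|)},
\qquad j\ge 0,\ \nu\in\Z^n\times\Z^n,
\]
with implicit constant depending only on $\alpha,\beta,\gamma$ (and $n$ and the $BS^m_{\rho,\rho}$ seminorms of $\sigma$). This follows from the definition \eqref{def-sigmajn} by the Leibniz rule. On $\supp\Psi_j$ one has $1+|\xi|+|\eta|\approx 2^j$, so the $BS^m_{\rho,\rho}$ hypothesis gives $|\partial^{\alpha}_x\partial^{\beta_1}_{\xi}\partial^{\gamma_1}_{\eta}\sigma|\lesssim 2^{j(m+\rho|\alpha|-\rho(|\beta_1|+|\gamma_1|))}$ there; each $\xi$- or $\eta$-derivative falling on the cutoffs $\varphi(2^{-j\rho}\xi-\nu_1)$ or $\varphi(2^{-j\rho}\eta-\nu_2)$ contributes a factor $2^{-j\rho}$; and each derivative falling on $\Psi_j$ contributes a factor $2^{-j}\le 2^{-j\rho}$ by \eqref{dyadic-decomposition}, using $0\le\rho\le 1$. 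Collecting the factors and summing over the finitely many terms of the Leibniz decomposition gives the claim; note $\partial_x$ never meets the $\nu$- or $j$-cutoffs, so its only cost is the power $2^{j\rho|\alpha|}$.

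\emph{Step 2: apply $\psi_k(2^{-j\rho}D_x)$.} Fix $\beta,\gamma$ and set $g(x)=\partial^{\beta}_{\xi}\partial^{\gamma}_{\eta}\sigma_{j,\nu}(x,\xi,\eta)$, with $\xi,\eta$ as parameters; Step 1 reads $\|\partial^{\alpha}_x g\|_{L^{\infty}}\lesssim 2^{j\rho|\alpha|}M$, where $M:=2^{jm}2^{-j\rho(|\beta|+|\gamma|)}$. For $k=0$ the representation \eqref{def-sigmajkn} and $\|\calF^{-1}\psi_0\|_{L^1}\lesssim 1$ give $|\psi_0(2^{-j\rho}D_x)g(x)|\lesssim M$, which is the asserted bound. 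For $k\ge 1$, I would use that $\calF^{-1}\psi_k$ is a Schwartz function all of whose moments vanish, because $\psi_k$ vanishes near the origin. Hence, by \eqref{def-sigmajkn}, for any $N\in\N_0$ one may subtract the degree-$(N-1)$ Taylor polynomial of $g$ at $x$ inside the integral without changing its value:
\[
\psi_k(2^{-j\rho}D_x)g(x)
=2^{j\rho n}\int_{\R^n}[\calF^{-1}\psi_k](2^{j\rho}y)
\Big(g(x-y)-\sum_{|\alpha|<N}\frac{(-y)^{\alpha}}{\alpha!}\partial^{\alpha}_x g(x)\Big)\,dy.
\]
Taylor's theorem bounds the bracket by $\lesssim|y|^{N}\sup_{|\alpha|=N}\|\partial^{\alpha}_x g\|_{L^{\infty}}\lesssim|y|^{N}2^{j\rho N}M$; meanwhile, from $\|\partial^{\alpha}\psi_k\|_{L^{\infty}}\lesssim 2^{-k|\alpha|}$ and $\supp\psi_k\subset\{|\zeta|\le 2^{k+1}\}$ one gets $|[\calF^{-1}\psi_k](z)|\lesssim 2^{kn}(1+2^{k}|z|)^{-L}$ for every $L$, hence $\int_{\R^n}|[\calF^{-1}\psi_k](z)|\,|z|^{N}\,dz\lesssim 2^{-kN}$. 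Substituting $z=2^{j\rho}y$ and combining the two estimates yields $|\psi_k(2^{-j\rho}D_x)g(x)|\lesssim 2^{-kN}M=2^{jm-kN}2^{-j\rho(|\beta|+|\gamma|)}$, as required.

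The argument is essentially bookkeeping; the one point deserving care is that a symbol in $BS^m_{\rho,\rho}$ need not decay in $x$, so one must work throughout with the convolution representation \eqref{def-sigmajkn} of $\psi_k(2^{-j\rho}D_x)$ rather than with Fourier inversion, and one must justify the Taylor-subtraction step — which is legitimate precisely because $\calF^{-1}\psi_k$ is Schwartz with all moments vanishing for $k\ge 1$, so the subtracted polynomial integrates to zero against it.
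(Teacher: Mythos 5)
Your proof is correct and follows essentially the same route as the paper's: estimate $\partial^{\alpha}_x\partial^{\beta}_{\xi}\partial^{\gamma}_{\eta}\sigma_{j,\nu}$ by $2^{jm+j\rho(|\alpha|-|\beta|-|\gamma|)}$ using $1+|\xi|+|\eta|\approx 2^j$ on $\suppast\sigma_{j,\nu}$, then for $k\ge 1$ subtract the degree-$(N-1)$ Taylor polynomial inside the convolution (legitimate by the vanishing moments of $\calF^{-1}\psi_k$), bound the remainder via the $x$-derivative estimate, and absorb the resulting $|y|^N 2^{j\rho N}$ against the decay $|\calF^{-1}\psi_k(z)|\lesssim 2^{kn}(1+2^k|z|)^{-L}$ to extract $2^{-kN}$. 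You spell out the Leibniz bookkeeping in Step 1 and the $k=0$ case a bit more explicitly than the paper, but there is no substantive difference in the argument.
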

\begin{proof}
First, suppose $k \ge 1$. 
Then by the moment condition of $\calF^{-1}\psi_k$ and 
Taylor's formula, 
we can write \eqref{def-sigmajkn} as 
\begin{align*}
&\sigma_{j,k,\nu}(x,\xi,\eta)
\\
&=2^{j\rho n}
\int_{\R^n}[\calF^{-1}\psi_k](2^{j\rho}y)
\bigg(\sigma_{j,\nu}(x-y,\xi,\eta)
-\sum_{|\alpha|<N}
\frac{\partial_x^{\alpha}\sigma_{j,\nu}(x,\xi,\eta)}{\alpha!}
(-y)^{\alpha}\bigg)dy
\\
&=2^{j\rho n}
\int_{\R^n}
[\calF^{-1}\psi_k](2^{j\rho}y)
\bigg(N\sum_{|\alpha|=N}\frac{(-y)^{\alpha}}{\alpha!}
\int_0^1
(1-t)^{N-1}
[\partial_x^{\alpha}\sigma_{j,\nu}](x-ty,\xi,\eta)\, dt
\bigg)dy.
\end{align*}
Using the fact that $1+|\xi|+|\eta| \approx 2^j$
for $(\xi,\eta) \in \suppast (\sigma_{j,\nu})$, 
we have
\[
|\partial^{\alpha}_x\partial^{\beta}_{\xi}\partial^{\gamma}_{\eta}
\sigma_{j,\nu}(x,\xi,\eta)|
\lesssim 2^{jm+j\rho(|\alpha|-|\beta|-|\gamma|)}.
\]
On the other hand,
it follows from \eqref{dyadic-decomposition} that
\[
|\calF^{-1}\psi_k(y)|
\lesssim 2^{kn}(1+2^k|y|)^{-(N+n+1)}.
\]
Hence 
\begin{align*}
&|\partial^{\beta}_{\xi}\partial^{\gamma}_{\eta}
\sigma_{j,k,\nu}(x,\xi,\eta)|
\\
&\lesssim
\sum_{|\alpha|=N}
2^{j\rho n}
\int_{\R^n}\int_0^1
\bigg|[\calF^{-1}\psi_k](2^{j\rho}y)y^{\alpha}
[\partial_x^{\alpha}\partial^{\beta}_{\xi}\partial^{\gamma}_{\eta}
\sigma_{j,\nu}](x-ty,\xi,\eta)\bigg| dtdy
\\
&\lesssim
2^{(j\rho+k)n}\int_{\R^n}(1+2^{j\rho+k}|y|)^{-(N+n+1)}
|y|^{N}2^{jm+j\rho(N-|\beta|-|\gamma|)}\, dy
\\
&
\approx 
2^{jm-kN}2^{-j\rho(|\beta|+|\gamma|)}. 
\end{align*}
If $k=0$, then using \eqref{def-sigmajkn} 
and slightly modifying the above argument, 
we obtain the desired estimate. 
\end{proof}

\begin{lem}\label{basic-estimate-2}
For each $N \in \N_0$,
the estimate
\[
|T_{\sigma_{j,k,\nu}}(f,g)(x)|
\lesssim
2^{jm-kN}S_{2^{j\rho}}(f)(x)S_{2^{j\rho}}(g)(x)
\]
holds for $j,k \ge 0$ and $\nu \in \Z^n \times \Z^n$.
\end{lem}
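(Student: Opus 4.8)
The plan is to reduce the claim to a pointwise estimate for the trilinear kernel of $T_{\sigma_{j,k,\nu}}$. For $f,g\in\calS$ I would first write
\[
T_{\sigma_{j,k,\nu}}(f,g)(x)
=\int_{\R^n\times\R^n}K_{j,k,\nu}(x,y,z)\,f(y)g(z)\,dy\,dz,
\qquad
K_{j,k,\nu}(x,y,z)
=\frac{1}{(2\pi)^{2n}}\int_{\R^n\times\R^n}e^{i(x-y)\cdot\xi+i(x-z)\cdot\eta}\sigma_{j,k,\nu}(x,\xi,\eta)\,d\xi\,d\eta.
\]
This representation and the use of Fubini are legitimate because, for each fixed $x$, the cutoffs $\varphi(2^{-j\rho}\xi-\nu_1)\varphi(2^{-j\rho}\eta-\nu_2)\Psi_j(\xi,\eta)$ appearing in \eqref{def-sigmajn}--\eqref{def-sigmajkn} force $\sigma_{j,k,\nu}(x,\cdot,\cdot)$ to be supported in the product of a cube in $\xi$ centred at $2^{j\rho}\nu_1$ of side length $\approx 2^{j\rho}$ with a cube in $\eta$ centred at $2^{j\rho}\nu_2$ of side length $\approx 2^{j\rho}$. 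In particular the $\xi$-section and the $\eta$-section of this support each have measure $\lesssim 2^{j\rho n}$.

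Next I would prove the kernel bound
\[
|K_{j,k,\nu}(x,y,z)|
\lesssim 2^{jm-kN}\,\frac{2^{j\rho n}}{(1+2^{j\rho}|x-y|)^{n+1}}\,\frac{2^{j\rho n}}{(1+2^{j\rho}|x-z|)^{n+1}}.
\]
Since $\sigma_{j,k,\nu}(x,\cdot,\cdot)$ has compact support, integrating by parts in $\xi$ and in $\eta$ produces no boundary terms and gives, for all multi-indices $\beta,\gamma$,
\[
\bigl|(x-y)^{\beta}(x-z)^{\gamma}K_{j,k,\nu}(x,y,z)\bigr|
\lesssim\int\bigl|\partial_{\xi}^{\beta}\partial_{\eta}^{\gamma}\sigma_{j,k,\nu}(x,\xi,\eta)\bigr|\,d\xi\,d\eta,
\]
the integral extended over the (fixed, bounded) $(\xi,\eta)$-support. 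By Lemma \ref{basic-estimate-1} the integrand is $\lesssim 2^{jm-kN}2^{-j\rho(|\beta|+|\gamma|)}$, while the support has measure $\lesssim 2^{2j\rho n}$; hence
\[
\bigl|(x-y)^{\beta}(x-z)^{\gamma}K_{j,k,\nu}(x,y,z)\bigr|
\lesssim 2^{jm-kN}\,2^{2j\rho n}\,2^{-j\rho(|\beta|+|\gamma|)}.
\]
Applying this with $\beta=\gamma=0$, with $\beta$ ranging over the multi-indices of length $n+1$ and $\gamma=0$, with $\beta=0$ and $\gamma$ of length $n+1$, and with both $\beta$ and $\gamma$ of length $n+1$, and then combining the four resulting inequalities, yields the displayed bound on $K_{j,k,\nu}$.

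Finally, inserting the kernel bound into the integral representation and reading off the definition of $S_{2^{j\rho}}$,
\[
|T_{\sigma_{j,k,\nu}}(f,g)(x)|
\le\int_{\R^n\times\R^n}|K_{j,k,\nu}(x,y,z)|\,|f(y)|\,|g(z)|\,dy\,dz
\lesssim 2^{jm-kN}\,S_{2^{j\rho}}(f)(x)\,S_{2^{j\rho}}(g)(x),
\]
which is the assertion. I do not expect any genuine obstacle here: this is a standard Fourier-integral kernel estimate, and Lemma \ref{basic-estimate-1} already supplies all the symbol bounds. The only points needing a little care are the justification of the kernel representation and of Fubini (covered by the compactness of the $(\xi,\eta)$-support noted above), the bookkeeping of the two $2^{j\rho n}$ factors coming from the supports so that they match exactly the normalization built into $S_a$, and the degenerate case $j=0$ (where $2^{j\rho}=1$ and only finitely many $\nu$ occur), for which the same computation applies verbatim.
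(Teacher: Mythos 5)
Your proposal is correct and follows essentially the same route as the paper: the kernel representation, integration by parts in $(\xi,\eta)$, Lemma \ref{basic-estimate-1}, and the size $\lesssim 2^{2j\rho n}$ of the $(\xi,\eta)$-support, yielding the same rapidly decaying kernel bound and then the pointwise estimate. The paper's proof is simply more terse, leaving the multi-index bookkeeping implicit.
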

\begin{proof}
We write
\[
T_{\sigma_{j,k,\nu}}(f,g)(x)
=\int_{\R^n \times \R^n}
K_{j,k,\nu}(x,x-y,x-z)f(y)g(z)\, dydz,
\]
where
\[
K_{j,k,\nu}(x,y,z)
=\frac{1}{(2\pi)^{2n}}\int_{\R^n \times \R^n}
e^{i(y\cdot\xi+z\cdot\eta)}
\sigma_{j,k,\nu}(x,\xi,\eta)\, d\xi d\eta.
\]
Since
$|\xi-2^{j\rho}\nu_1| \lesssim 2^{j\rho}$
and
$|\eta-2^{j\rho}\nu_2| \lesssim 2^{j\rho}$
for $(\xi,\eta) \in \suppast (\sigma_{j,k,\nu})$,
it follows from Lemma \ref{basic-estimate-1}
and integration by parts that
\[
|K_{j,k,\nu}(x,y,z)|
\lesssim
2^{jm-kN}
\frac{2^{j\rho n}}{(1+2^{j\rho}|y|)^{n+1}}
\frac{2^{j\rho n}}{(1+2^{j\rho}|z|)^{n+1}}.
\]
From this the desired estimate follows.
\end{proof}

The estimate 
\begin{equation}\label{single-estimate}
\|T_{\sigma_{j,k,\nu}}(f,g)\|_{L^2}
\lesssim 
2^{jm -kN} \|f\|_{L^2} \|g\|_{L^{\infty}}
\end{equation}
immediately follows from Lemma \ref{basic-estimate-2}. 
In the lemmas below, we shall derive finer 
estimates by utilizing orthogonality.

\begin{lem}\label{sum-on-line}
{\rm (1)} For each $N \in \N_0$, the estimate 
\[
\bigg\|\sum_{\nu_2 \in \Z^n}
T_{\sigma_{j,k,\nu}}(f,g)\bigg\|_{L^2}
\lesssim 2^{jm-kN}\|f\|_{L^2}\|g\|_{L^\infty}
\]
holds for all $j,k \ge 0$ and all $\nu_1 \in \Z^n$. 
\\
{\rm (2)} For each $N \in \N_0$, the estimate 
\[
\bigg\|\sum_{\nu_1+\nu_2=\mu}
T_{\sigma_{j,k,\nu}}(f,g)\bigg\|_{L^2}
\lesssim 2^{jm-kN}\|f\|_{L^2}\|g\|_{L^\infty}
\]
holds for all $j,k \ge 0$ and all $\mu \in \Z^n$. 
\end{lem}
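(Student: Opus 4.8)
The plan is to derive both estimates from the single-term bound \eqref{single-estimate}, the only issue being the large number of summands: in (1) the summands have almost disjoint Fourier supports, while in (2) they all live in one frequency ball, so there the orthogonality has to be extracted from the inputs. Fix $\widetilde\varphi\in\calS(\R^n)$ with $\widetilde\varphi\equiv1$ on $Q$, and set $f_{\nu_1}=\widetilde\varphi(2^{-j\rho}D-\nu_1)f$ and $g_{\nu_2}=\widetilde\varphi(2^{-j\rho}D-\nu_2)g$. Since $\widetilde\varphi\equiv1$ on $\supp\varphi\subset Q$, \eqref{replace-f1-g1} gives $T_{\sigma_{j,k,\nu}}(f,g)=T_{\sigma_{j,k,\nu}}(f_{\nu_1},g_{\nu_2})$ for every $\nu=(\nu_1,\nu_2)$; also, for fixed $j,k$ only finitely many $\nu$ contribute, so all sums below are finite. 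Writing $S=S_{2^{j\rho}}$, the inequality $S(u)^2\lesssim S(|u|^2)$ (Cauchy--Schwarz inside the kernel), combined with Lemma~\ref{basic-estimate-0}, gives for all $x$
\[
\sum_{\nu_2\in\Z^n}S(g_{\nu_2})(x)^2\lesssim S(|g|^2)(x)\lesssim\|g\|_{L^\infty}^2
\qquad\text{and}\qquad
\sum_{\nu_1\in\Z^n}S(f_{\nu_1})(x)^2\lesssim S\bigl(S(|f|^2)\bigr)(x);
\]
I will also use $\|S(u)\|_{L^p}\lesssim\|u\|_{L^p}$ for $p=1,2$, as $S$ is convolution with an $L^1$-normalized kernel.

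For (1), fix $\nu_1$. By \eqref{FouriersuppTsjkn}, $\supp\calF[T_{\sigma_{j,k,\nu}}(f,g)]$ lies in a ball centered at $2^{j\rho}(\nu_1+\nu_2)$ of radius $\lesssim2^{j\rho+k}$; as $\nu_2$ runs over $\Z^n$ these centers are $2^{j\rho}$-separated, so two such balls can meet only when $|\nu_2-\nu_2'|\lesssim2^k$, i.e.\ for $\lesssim2^{kn}$ values of $\nu_2'$. Thus the summands are pairwise orthogonal outside that range, and Lemma~\ref{orthogonality}(1) gives $\bigl\|\sum_{\nu_2}T_{\sigma_{j,k,\nu}}(f,g)\bigr\|_{L^2}^2\lesssim2^{kn}\sum_{\nu_2}\bigl\|T_{\sigma_{j,k,\nu}}(f,g)\bigr\|_{L^2}^2$. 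By Lemma~\ref{basic-estimate-2} with inputs $f,g_{\nu_2}$ we have $|T_{\sigma_{j,k,\nu}}(f,g)(x)|\lesssim2^{jm-kN}S(f)(x)S(g_{\nu_2})(x)$; squaring, integrating, summing over $\nu_2$, and using the first displayed estimate and $\|S(f)\|_{L^2}\lesssim\|f\|_{L^2}$ yields $\sum_{\nu_2}\|T_{\sigma_{j,k,\nu}}(f,g)\|_{L^2}^2\lesssim2^{2jm-2kN}\|f\|_{L^2}^2\|g\|_{L^\infty}^2$. Hence the left side of (1) is $\lesssim2^{jm-kN+kn/2}\|f\|_{L^2}\|g\|_{L^\infty}$, and replacing $N$ by $N+n$ (legitimate since $N$ is arbitrary) completes the proof.

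For (2), fix $\mu$. Now for every $\nu$ with $\nu_1+\nu_2=\mu$ the ball in \eqref{FouriersuppTsjkn} is one and the same set $B_\mu$ (centered at $2^{j\rho}\mu$, radius $\lesssim2^{j\rho+k}$), so output orthogonality is useless and we argue by duality. Let $h\in\calS$ with $\|h\|_{L^2}=1$ and put $h_\mu=\chi(D)h$ with $\chi\in C_c^\infty(\R^n)$, $0\le\chi\le1$, $\chi\equiv1$ on $B_\mu$; then $\|h_\mu\|_{L^2}\le1$ and, by \eqref{replace-h1}, $\langle T_{\sigma_{j,k,\nu}}(f,g),h\rangle=\langle T_{\sigma_{j,k,\nu}}(f_{\nu_1},g_{\nu_2}),h_\mu\rangle$ for all such $\nu$. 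By Lemma~\ref{basic-estimate-2}, $|\langle T_{\sigma_{j,k,\nu}}(f_{\nu_1},g_{\nu_2}),h_\mu\rangle|\lesssim2^{jm-kN}\int|h_\mu(x)|S(f_{\nu_1})(x)S(g_{\nu_2})(x)\,dx$; summing over $\nu_1$ (with $\nu_2=\mu-\nu_1$) and applying the Cauchy--Schwarz inequality in $\nu_1$ together with the two displayed estimates gives
\[
\Bigl|\sum_{\nu_1+\nu_2=\mu}\bigl\langle T_{\sigma_{j,k,\nu}}(f,g),h\bigr\rangle\Bigr|
\lesssim2^{jm-kN}\|g\|_{L^\infty}\int_{\R^n}|h_\mu(x)|\,S\bigl(S(|f|^2)\bigr)(x)^{1/2}\,dx .
\]
Then the Cauchy--Schwarz inequality in $x$, $\|h_\mu\|_{L^2}\le1$, and $\|S(S(|f|^2))\|_{L^1}\lesssim\|f\|_{L^2}^2$ bound the right side by $\lesssim2^{jm-kN}\|f\|_{L^2}\|g\|_{L^\infty}$; taking the supremum over $h$ proves (2).

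The crux is (2): with output orthogonality unavailable, the $\ell^2$-summability in $\nu_1$ must be supplied by the frequency-localized inputs $f_{\nu_1},g_{\nu_2}$ via Lemma~\ref{basic-estimate-0}, and one must observe that $h_\mu$ can be taken independent of $\nu_1$ precisely because all summands share $B_\mu$. The remaining points are routine but require care: checking the hypotheses of \eqref{replace-f1-g1} and \eqref{replace-h1} for the chosen $\widetilde\varphi$ and $\chi$, counting the overlap $2^{kn}$ in (1), checking that $S\circ S$ is still bounded on $L^1$, and using the arbitrariness of $N$ to absorb the harmless factors $2^{kn/2}$.
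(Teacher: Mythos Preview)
Your proof is correct and follows essentially the same approach as the paper: part (1) is identical, and in part (2) the core mechanism (Cauchy--Schwarz in $\nu_1$ combined with Lemma~\ref{basic-estimate-0} applied to both $f_{\nu_1}$ and $g_{\nu_2}$) is the same. The only difference is that for (2) the paper obtains the pointwise bound $\bigl|\sum_{\nu_1+\nu_2=\mu}T_{\sigma_{j,k,\nu}}(f,g)(x)\bigr|\lesssim 2^{jm-kN}\,S_{2^{j\rho}}(f^2)(x)^{1/2}\|g\|_{L^\infty}$ directly, so your passage through duality and the auxiliary $h_\mu$ is an unnecessary (though harmless) detour.
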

\begin{proof} 
Proof of (1). 
Take a function 
$\widetilde{\varphi}\in C_{0}^{\infty}(\R^n)$ 
such that 
$\widetilde{\varphi} (\xi)=1$ on $\supp \varphi$. 
Then, by \eqref{replace-f1-g1}, 
\begin{equation}\label{replace-fjn-gjn}
T_{\sigma_{j,k,\nu}} (f,g)
=T_{\sigma_{j,k,\nu}} (f_{j,\nu_1},g)
=T_{\sigma_{j,k,\nu}} (f,g_{j,\nu_2})  
\end{equation}
with 
\begin{equation}\label{fjn-gjn}
f_{j,\nu_1}=\widetilde{\varphi} (2^{-j\rho}D - \nu_1)f, 
\quad 
g_{j,\nu_2}=\widetilde{\varphi} (2^{-j\rho}D - \nu_2)g.
\end{equation}
From \eqref{FouriersuppTsjkn}, 
we see that 
\begin{equation}\label{Fouriersupp-n1n2}
\supp \calF [T_{\sigma_{j,k,\nu}}(f,g)] 
\subset 
2^{j\rho}(\nu_1 + \nu_2) + 2^{j\rho + k+2}Q.  
\end{equation}
Notice that for fixed $\nu_1 \in \Z^{n}$  
the interaction of the family 
$\{2^{j\rho}(\nu_1 + \nu_2) 
+ 2^{j\rho + k+2}Q\}_{\nu_2 \in\Z^n}$ 
is $\lesssim 2^{kn}$.  
Hence, by Lemma \ref{orthogonality} (1) and 
Lemma \ref{basic-estimate-2}, 
we have 
\begin{align}
&
\bigg\|
\sum_{\nu_2 \in \Z^n}
T_{\sigma_{j,k, \nu}}
(f,g)
\bigg\|_{L^2}^2
=
\bigg\|
\sum_{\nu_2 \in \Z^n}
T_{\sigma_{j,k, \nu}}
(f,g_{j,\nu_2})
\bigg\|_{L^2}^2
\nonumber
\\
&
\lesssim 
2^{kn}
\sum_{\nu_2 \in \Z^n}
\|
T_{\sigma_{j,k, \nu}}(f,g_{j,\nu_2})
\|_{L^2}^2
\nonumber 
\\
&
\lesssim 
2^{kn + 2(jm-kN)}
\sum_{\nu_2 \in \Z^n}
\int_{\R^n}
S_{2^{j\rho}}(f)(x)^2
S_{2^{j\rho}}(g_{j,\nu_2})(x)^2
\, dx. 
\label{1111}
\end{align}
By Schwarz's inequality and 
Lemma \ref{basic-estimate-0}, 
\begin{align*}
&
\sum_{\nu_2 \in \Z^n}
\int_{\R^n}
S_{2^{j\rho}}(f)(x)^2
S_{2^{j\rho}}(g_{j,\nu_2})(x)^2
\, dx
\\
&
\lesssim 
\sum_{\nu_2 \in \Z^n}
\int_{\R^n}
S_{2^{j\rho}}(f^2)(x)
S_{2^{j\rho}}(g_{j,\nu_2}^2)(x)
\, dx
\\
&
\lesssim 
\int_{\R^n}
S_{2^{j\rho}}(f^2)(x)
S_{2^{j\rho}}\big[S_{2^{j\rho}}(g^2)\big](x)
\, dx
\\
&
\lesssim 
\|f\|_{L^2}^2
\|g\|_{L^{\infty}}^2.
\end{align*}
Since $N$ can be taken arbitrarily large, 
we obtain the desired estimate.

Proof of (2). 
By \eqref{replace-fjn-gjn}-\eqref{fjn-gjn}, 
Lemma \ref{basic-estimate-2}, and Schwarz's inequality, 
we have 
\begin{align*}
&
\bigg|
\sum_{\nu_1 + \nu_2=\mu} 
T_{\sigma_{j,k,\nu}}
(f,g)(x)
\bigg|
\le 
\sum_{\nu_1 + \nu_2=\mu} 
|T_{\sigma_{j,k,\nu}}
(f_{j,\nu_1},g_{j,\nu_2})(x)|
\\
&
\lesssim 
2^{jm-kN}
\sum_{\nu_1 + \nu_2=\mu} 
S_{2^{j\rho}}(f_{j,\nu_1})(x)
S_{2^{j\rho}}(g_{j,\nu_2})(x)
\\
&
\le 
2^{jm-kN}
\bigg(\sum_{\nu_1} 
S_{2^{j\rho}}(f_{j,\nu_1})(x)^2
\bigg)^{1/2}
\bigg(\sum_{\nu_1} 
S_{2^{j\rho}}(g_{j,\mu -\nu_1})(x)^2
\bigg)^{1/2}
\\
&
\lesssim  
2^{jm-kN}
\bigg(\sum_{\nu_1} 
S_{2^{j\rho}}(f_{j,\nu_1}^2)(x)
\bigg)^{1/2}
\bigg(\sum_{\nu_1} 
S_{2^{j\rho}}(g_{j,\mu -\nu_1}^2)(x)
\bigg)^{1/2}. 
\end{align*}
By Lemma \ref{basic-estimate-0}, 
we have 
\begin{equation*}
\bigg(\sum_{\nu_1} 
S_{2^{j\rho}}(f_{j,\nu_1}^2)(x)
\bigg)^{1/2}
\lesssim 
S_{2^{j\rho}}\big[S_{2^{j\rho}}(f^2)\big]
(x)^{1/2}
\approx 
S_{2^{j\rho}}(f^2)(x)^{1/2}.
\end{equation*}
Similarly, 
\[
\bigg(\sum_{\nu_1} 
S_{2^{j\rho}}(g_{j,\mu -\nu_1}^2)(x)
\bigg)^{1/2}
\lesssim 
S_{2^{j\rho}}(g^2)(x)^{1/2}
\lesssim \|g\|_{L^{\infty}}. 
\] 
Thus we have the pointwise estimate 
\[
\bigg|
\sum_{\nu_1 + \nu_2=\mu} 
T_{\sigma_{j,k,\nu}}
(f,g)(x)
\bigg|
\lesssim 
2^{jm-kN} 
S_{2^{j\rho}}(f^2)(x)^{1/2}
\|g\|_{L^{\infty}}, 
\]
from which the desired $L^2$ inequality follows. 
\end{proof}

\begin{lem}\label{sum-on-several-lines} 
{\rm (1)} For each $N \in \N_0$, the estimate 
\[
\bigg\|
\sum_{\nu_1 \in \Lambda}\sum_{\nu_2 \in \Z^n}
T_{\sigma_{j,k,\nu}}(f,g)\bigg\|_{L^2}
\lesssim 
|\Lambda|^{1/2}
2^{jm-kN}\|f\|_{L^2}\|g\|_{L^\infty}
\]
holds for all $j,k \ge 0$ and all finite sets $\Lambda \subset \Z^n$. 
\\
{\rm (2)} For each $N \in \N_0$, the estimate 
\[
\bigg\|\sum_{\mu \in \Lambda}\sum_{\nu_1+\nu_2=\mu}
T_{\sigma_{j,k,\nu}}(f,g)\bigg\|_{L^2}
\lesssim 
|\Lambda|^{1/2}
2^{jm-kN}\|f\|_{L^2}\|g\|_{L^\infty}
\]
holds for all $j,k \ge 0$ and all finite sets $\Lambda \subset \Z^n$. 
\end{lem}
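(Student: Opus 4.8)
The plan is to deduce both parts from Lemma~\ref{sum-on-line} by reorganizing the double sum according to the value of $\mu=\nu_1+\nu_2$ and then invoking the almost-orthogonality of the Fourier supports recorded in \eqref{Fouriersupp-n1n2}, which shows that each $T_{\sigma_{j,k,\nu}}(f,g)$ has Fourier transform supported in $2^{j\rho}(\nu_1+\nu_2)+2^{j\rho+k+2}Q$; the interaction of the cube family $\{2^{j\rho}\mu+2^{j\rho+k+2}Q\}_{\mu\in\Z^n}$ is $\lesssim 2^{kn}$. For part (2) this is immediate: with $G_\mu=\sum_{\nu_1+\nu_2=\mu}T_{\sigma_{j,k,\nu}}(f,g)$ one has $\supp\calF G_\mu\subset 2^{j\rho}\mu+2^{j\rho+k+2}Q$ and, by Lemma~\ref{sum-on-line}(2), $\|G_\mu\|_{L^2}\lesssim 2^{jm-kN}\|f\|_{L^2}\|g\|_{L^\infty}$, so Lemma~\ref{orthogonality}(1) gives
\[
\Big\|\sum_{\mu\in\Lambda}G_\mu\Big\|_{L^2}^2
\lesssim 2^{kn}\sum_{\mu\in\Lambda}\|G_\mu\|_{L^2}^2
\lesssim 2^{kn}\,|\Lambda|\,\big(2^{jm-kN}\|f\|_{L^2}\|g\|_{L^\infty}\big)^2,
\]
and since $N$ may be taken arbitrarily large the factor $2^{kn}$ is harmless.

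For part (1), I would set $H_\mu=\sum_{\nu_1\in\Lambda}T_{\sigma_{j,k,(\nu_1,\mu-\nu_1)}}(f,g)$, so that $\sum_{\nu_1\in\Lambda}\sum_{\nu_2\in\Z^n}T_{\sigma_{j,k,\nu}}(f,g)=\sum_{\mu\in\Z^n}H_\mu$ with $\supp\calF H_\mu\subset 2^{j\rho}\mu+2^{j\rho+k+2}Q$. Here the plain $L^2$ bound of Lemma~\ref{sum-on-line}(1) for $H_\mu$ does not decay in $\mu$ (and for fixed $j$ there are about $2^{j(1-\rho)n}$ nonzero blocks), so instead I would rerun the pointwise argument behind Lemma~\ref{sum-on-line}(2) one layer more carefully: using \eqref{replace-fjn-gjn}--\eqref{fjn-gjn}, Lemma~\ref{basic-estimate-2}, and the Cauchy--Schwarz inequality in $\nu_1\in\Lambda$, enlarging only the $f$-sum to all of $\Z^n$ and applying Lemma~\ref{basic-estimate-0} exactly as in the proof of Lemma~\ref{sum-on-line},
\[
|H_\mu(x)|
\lesssim 2^{jm-kN}\,S_{2^{j\rho}}(f^2)(x)^{1/2}
\Big(\sum_{\nu_1\in\Lambda}S_{2^{j\rho}}(g_{j,\mu-\nu_1})(x)^2\Big)^{1/2}.
\]
The essential point is that the $g$-square-function is kept indexed by $\Lambda$: summing over $\mu$ then reconstitutes the full index set, $\sum_{\mu\in\Z^n}\sum_{\nu_1\in\Lambda}S_{2^{j\rho}}(g_{j,\mu-\nu_1})(x)^2=|\Lambda|\sum_{\nu_2\in\Z^n}S_{2^{j\rho}}(g_{j,\nu_2})(x)^2\lesssim|\Lambda|\,\|g\|_{L^\infty}^2$ (again via Lemma~\ref{basic-estimate-0}, the submean inequality $S_a(h)^2\lesssim S_a(h^2)$, and $S_a\circ S_a\approx S_a$), whence
\[
\sum_{\mu\in\Z^n}\|H_\mu\|_{L^2}^2
\lesssim 2^{2(jm-kN)}\,|\Lambda|\,\|g\|_{L^\infty}^2\int_{\R^n}S_{2^{j\rho}}(f^2)(x)\,dx
\lesssim 2^{2(jm-kN)}\,|\Lambda|\,\|f\|_{L^2}^2\|g\|_{L^\infty}^2 .
\]
Lemma~\ref{orthogonality}(1) applied to $\{H_\mu\}$ then gives $\|\sum_\mu H_\mu\|_{L^2}^2\lesssim 2^{kn}\sum_\mu\|H_\mu\|_{L^2}^2$, and letting $N$ be large absorbs $2^{kn}$.

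The only real obstacle is the bookkeeping in part (1): one has to balance the pointwise (non-$L^2$-orthogonal) control of each block $H_\mu$ coming from Lemma~\ref{basic-estimate-2} against the $L^2$-orthogonality in $\mu$, and it is crucial not to over-sum the $g$-localizations — replacing $\sum_{\nu_1\in\Lambda}$ by $\sum_{\nu_1\in\Z^n}$ at that stage would lose a factor $2^{j(1-\rho)n}$ in place of the desired $|\Lambda|$.
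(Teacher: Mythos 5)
Your proof is correct, but part (1) is argued along a genuinely different line from the paper.

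For part (2) your argument is essentially the paper's: both use the output Fourier support localization \eqref{Fouriersupp-n1n2}, the resulting interaction bound $\lesssim 2^{kn}$, an almost-orthogonality lemma, and Lemma~\ref{sum-on-line}(2). The only cosmetic difference is that you apply the function-level version Lemma~\ref{orthogonality}(1) to $G_\mu=\sum_{\nu_1+\nu_2=\mu}T_{\sigma_{j,k,\nu}}(f,g)$, while the paper freezes $g$ and applies the operator-level version Lemma~\ref{orthogonality}(2) to $\sum_{\nu_1+\nu_2=\mu}T_{\sigma_{j,k,\nu}}(\cdot,g)$.

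For part (1), the paper exploits orthogonality in the \emph{input} variable: with $g$ frozen, the linear operator $T_{\nu_1}=\sum_{\nu_2}T_{\sigma_{j,k,(\nu_1,\nu_2)}}(\cdot,g)$ factors through the Fourier cut-off $\ichi_{\supp\varphi(2^{-j\rho}\cdot-\nu_1)}(D)$, so $T_{\widetilde{\nu_1}}T_{\nu_1}^{\ast}\neq 0$ for only $\lesssim 1$ values of $\widetilde{\nu_1}$. Lemma~\ref{orthogonality}(3) then gives $\big\|\sum_{\nu_1\in\Lambda}T_{\nu_1}\big\|^2\lesssim\sum_{\nu_1\in\Lambda}\|T_{\nu_1}\|^2$ with \emph{no} $2^{kn}$ loss, and Lemma~\ref{sum-on-line}(1) supplies the single-block bound. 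You instead reorganize by $\mu=\nu_1+\nu_2$ and exploit orthogonality in the \emph{output} variable, paying the $2^{kn}$ interaction cost but absorbing it by the free large $N$. Because the blocks $H_\mu$ carry a restriction $\nu_1\in\Lambda$, you cannot simply cite Lemma~\ref{sum-on-line}(1) or (2) for them, and your device of keeping the $g$-square-function indexed by $\Lambda$ and letting $\sum_\mu$ reconstitute the full index set is exactly what prevents the loss of $2^{j(1-\rho)n}$ — you identify this correctly. Both routes are valid; the paper's is a touch cleaner for part (1) since it avoids the extra $2^{kn}$ and reuses Lemma~\ref{sum-on-line}(1) verbatim, while yours has the virtue of treating parts (1) and (2) by one uniform mechanism (output-side orthogonality in $\mu$).
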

\begin{proof} 
For the proof of (1) and (2),
we freeze $g \in L^{\infty}$ 
and consider the linear operator $T_{\sigma_{j,k,\nu}}(\cdot,g)$ 
defined by $[T_{\sigma_{j,k,\nu}}(\cdot,g)](f)=
T_{\sigma_{j,k,\nu}}(f,g)$
for $j,k \ge 0$ and $\nu \in \Z^n \times \Z^n$. 
By \eqref{single-estimate}, 
$T_{\sigma_{j,k,\nu}}(\cdot,g)$ is a bounded linear operator in $L^2$.

Proof of (1). 
Since 
\[
\suppast \bigg(\sum_{\nu_2\in Z^n} \sigma_{j,k,\nu}\bigg)
\subset 
\supp \varphi (2^{-j\rho}\cdot -\nu_1) \times \R^n,
\]
we have
\[
\sum_{\nu_2 \in \Z^n}T_{\sigma_{j,k,\nu}}(f,g)
=\sum_{\nu_2 \in \Z^n}T_{\sigma_{j,k,\nu}}
(\ichi_{\supp \varphi (2^{-j\rho}\cdot -\nu_1)}(D)f,g).
\]
In terms of the linear operator, this can be written as 
\[
\sum_{\nu_2 \in \Z^n}
T_{\sigma_{j,k,\nu}}(\cdot,g)=
\bigg(\sum_{\nu_2 \in \Z^n}
T_{\sigma_{j,k,\nu}}(\cdot,g)\bigg)
\ichi_{\supp \varphi (2^{-j\rho}\cdot -\nu_1)}(D).
\]
Since the interaction of the family 
$\{\supp \varphi (2^{-j\rho}\cdot -\nu_1) \}_{\nu_{1}}$ 
is $\lesssim 1$,
we see that 
\[
\bigg|\bigg\{\widetilde{\nu_1} \in \Z^n \,:\,
\bigg(\sum_{\nu_2 \in \Z^n}
T_{\sigma_{j,k,(\widetilde{\nu_1},\nu_2)}}(\cdot,g)\bigg)
\bigg(\sum_{\nu_2 \in \Z^n}
T_{\sigma_{j,k,(\nu_1,\nu_2)}}(\cdot,g)\bigg)^{\ast} \neq 0
\bigg\}\bigg|
\lesssim 1
\]
for all $\nu_1 \in \Z^n$. 
Thus Lemma \ref{orthogonality} (3) yields 
\[
\bigg\|
\sum_{\nu_1 \in \Lambda}
\sum_{\nu_2 \in \Z^n}
T_{\sigma_{j,k,\nu}}(\cdot ,g)
\bigg\|_{L^2\to L^2}^2
\lesssim 
\sum_{\nu_1 \in \Lambda}
\bigg\|
\sum_{\nu_2 \in \Z^n}
T_{\sigma_{j,k,\nu}}(\cdot ,g)
\bigg\|_{L^2\to L^2}^2. 
\]
By Lemma \ref{sum-on-line} (1), the 
right-hand side of the above is $\lesssim 2^{2(jm-kN)}
|\Lambda|\|g\|_{L^{\infty}}^2$, 
which implies the desired estimate.

Proof of (2).  
As in \eqref{Fouriersupp-n1n2}, 
the formula \eqref{FouriersuppTsjkn} implies 
\[
\supp \calF \bigg[\sum_{\nu_1+\nu_2=\mu }
T_{\sigma_{j,k,\nu}}(f,g)
\bigg] 
\subset 
2^{j\rho}\mu 
+ 2^{j\rho+ k+2}Q,
\]
which gives
\[
\sum_{\nu_1+\nu_2=\mu}
T_{\sigma_{j,k,\nu}}(f,g)
=\ichi_{2^{j\rho}\mu 
+ 2^{j\rho+ k+2}Q}(D)
\bigg(\sum_{\nu_1+\nu_2=\mu}
T_{\sigma_{j,k,\nu}}(f,g)\bigg).
\]
Thus, in terms of the linear operator, 
\[
\sum_{\nu_1+\nu_2=\mu}
T_{\sigma_{j,k,\nu}}(\cdot,g)
=\ichi_{2^{j\rho}\mu 
+ 2^{j\rho+ k+2}Q}(D)
\bigg(\sum_{\nu_1+\nu_2=\mu}
T_{\sigma_{j,k,\nu}}(\cdot,g)\bigg).
\]
Since the interaction of the family 
$\{2^{j\rho}\mu 
+ 2^{j\rho+ k+2}Q\}_{\mu \in \Z^n}$ 
is $\lesssim 2^{kn}$,
we see that 
\[
\bigg|\bigg\{\widetilde{\mu} \in \Z^n \,:\,
\bigg(\sum_{\nu_1+\nu_2=\widetilde{\mu}}
T_{\sigma_{j,k,\nu}}(\cdot,g)\bigg)^{\ast}
\bigg(\sum_{\nu_1+\nu_2=\mu}
T_{\sigma_{j,k,\nu}}(\cdot,g)\bigg)
\neq 0
\bigg\}\bigg|
\lesssim 2^{kn}
\]
for all $\mu \in \Z^n$. 
Hence Lemma \ref{orthogonality} (2) yields 
\[
\bigg\|
\sum_{\mu \in \Lambda}
\sum_{\nu_1 + \nu_2 =\mu}
T_{\sigma_{j,k,\nu}}(\cdot ,g)
\bigg\|_{L^2\to L^2}^2
\lesssim 
2^{kn}
\sum_{\mu \in \Lambda}
\bigg\|
\sum_{\nu_1 + \nu_2 =\mu}
T_{\sigma_{j,k,\nu}}(\cdot ,g)
\bigg\|_{L^2\to L^2}^2. 
\]
By Lemma \ref{sum-on-line} (2), the 
right-hand side of the above is 
$\lesssim 2^{kn+ 2(jm-kN)}|\Lambda|\|g\|_{L^{\infty}}^2$. 
Since $N$ can be taken arbitrarily large, we obtain the desired estimate. 
\end{proof}

Notice that 
$\sigma_{j,k,\nu}\neq 0$ only for $|\nu_{1}|\lesssim 2^{j(1-\rho)}$ 
and $|\nu_2|\lesssim 2^{j(1-\rho)}$ and hence 
$\sum_{\nu \in \Z^n \times \Z^n}\, T_{\sigma_{j,k,\nu}}$ 
can be written as the 
sum of Lemma \ref{sum-on-several-lines} (1) or (2) 
with $|\Lambda|\approx 2^{j(1-\rho)n}$. 
Hence the following lemma 
directly follows from Lemma \ref{sum-on-several-lines}.

\begin{lem}\label{sum-on-annulus} 
If $m=-(1-\rho)n/2$ and $0\le \rho\le 1$, 
then for each $N \in \N_0$ the estimate 
\[
\bigg\|\sum_{\nu \in \Z^n \times \Z^n}\, T_{\sigma_{j,k,\nu}}
(f,g)\bigg\|_{L^2}
\lesssim 
2^{-kN}\|f\|_{L^2}\|g\|_{L^\infty}
\]
holds for $j,k \ge 0$. 
\end{lem}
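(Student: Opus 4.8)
The plan is to read off Lemma~\ref{sum-on-annulus} from Lemma~\ref{sum-on-several-lines}(1) by observing that, for a fixed scale $j$, only boundedly many of the blocks $\sigma_{j,k,\nu}$ are nonzero. Concretely, I would first record the frequency localization already noted just before the statement: $\sigma_{j,k,\nu}$ has the same $(\xi,\eta)$-support as $\sigma_{j,\nu}$ (the operator $\psi_k(2^{-j\rho}D_x)$ acts only in $x$), and $\sigma_{j,\nu}\neq 0$ forces $|\xi|+|\eta|\approx 2^j$ from $\Psi_j$ together with $|\xi-2^{j\rho}\nu_1|\lesssim 2^{j\rho}$ and $|\eta-2^{j\rho}\nu_2|\lesssim 2^{j\rho}$ from the $\varphi$-factors. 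Since $\rho\le 1$, this gives $|2^{j\rho}\nu_1|\lesssim 2^{j}+2^{j\rho}\lesssim 2^j$, hence $|\nu_1|\lesssim 2^{j(1-\rho)}$, and likewise $|\nu_2|\lesssim 2^{j(1-\rho)}$. Therefore, setting $\Lambda_j=\{\nu_1\in\Z^n:|\nu_1|\le C2^{j(1-\rho)}\}$ with $C$ the constant implicit in the support bound, we have $|\Lambda_j|\approx 2^{j(1-\rho)n}$ and
\[
\sum_{\nu\in\Z^n\times\Z^n}T_{\sigma_{j,k,\nu}}(f,g)
=\sum_{\nu_1\in\Lambda_j}\;\sum_{\nu_2\in\Z^n}T_{\sigma_{j,k,\nu}}(f,g).
\]

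Next I would apply Lemma~\ref{sum-on-several-lines}(1) with $\Lambda=\Lambda_j$, which yields
\[
\bigg\|\sum_{\nu\in\Z^n\times\Z^n}T_{\sigma_{j,k,\nu}}(f,g)\bigg\|_{L^2}
\lesssim |\Lambda_j|^{1/2}\,2^{jm-kN}\,\|f\|_{L^2}\|g\|_{L^\infty}
\approx 2^{j(1-\rho)n/2}\,2^{jm-kN}\,\|f\|_{L^2}\|g\|_{L^\infty}.
\]
Since by hypothesis $m=-(1-\rho)n/2$, the two $j$-dependent exponents cancel exactly, $2^{j(1-\rho)n/2}\cdot 2^{jm}=2^0=1$, and we are left with the claimed estimate $\lesssim 2^{-kN}\|f\|_{L^2}\|g\|_{L^\infty}$, uniformly in $j$ and $k$. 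Since $N\in\N_0$ is arbitrary, the lemma follows.

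I do not expect any real obstacle here: all the substance --- the near-orthogonality packaging over $\nu_2$ and then over $\nu_1$, and the pointwise square-function estimates feeding Schur's lemma --- is already contained in Lemmas~\ref{basic-estimate-0}--\ref{sum-on-several-lines}. The only point worth stressing is why $m=-(1-\rho)n/2$ is the exponent that appears: it is precisely the value at which the ``number of active blocks'' factor $|\Lambda_j|^{1/2}=2^{j(1-\rho)n/2}$ is balanced by the per-block gain $2^{jm}$. One could run the identical argument through Lemma~\ref{sum-on-several-lines}(2) instead, grouping the blocks by $\mu=\nu_1+\nu_2$ with $|\mu|\lesssim 2^{j(1-\rho)}$; the bookkeeping and the final cancellation are the same.
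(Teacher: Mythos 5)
Your proposal is correct and is essentially identical to the paper's own argument, which in the text preceding the lemma points out that $\sigma_{j,k,\nu}\neq 0$ forces $|\nu_1|,|\nu_2|\lesssim 2^{j(1-\rho)}$, so the sum over $\nu$ is the one in Lemma~\ref{sum-on-several-lines}(1) (or (2)) with $|\Lambda|\approx 2^{j(1-\rho)n}$, and then $|\Lambda|^{1/2}2^{jm}=1$ because $m=-(1-\rho)n/2$. No discrepancies.
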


\subsection{Proof of Theorem \ref{main-thm-1}}\label{subsection-proof}

Throughout this subsection, we assume $m$, $\rho$, and $\sigma$ 
satisfy the conditions of Theorem \ref{main-thm-1}, namely, 
$0\le \rho <1$, $m=-(1-\rho)n/2$, 
and $\sigma \in BS^{m}_{\rho,\rho}(\R^n)$.

Before proceeding to the main argument, 
we shall see that it is sufficient to consider 
the case where $\suppast \sigma$ 
is included in a cone minus a ball centered at the origin.

To see this, take a function 
$\Theta \in C_{0}^{\infty}(\R^n \times \R^n)$ 
such that 
$\Theta (\xi, \eta)=1$ on 
$\{(|\xi|^2+ |\eta|^2)^{1/2}\le 2\}$ 
and $\supp \Theta 
\subset \{(|\xi|^2+ |\eta|^2)^{1/2}\le 4\}$, 
and write $\sigma$ as 
\[
\sigma(x, \xi,\eta)=\sigma(x, \xi,\eta)\Theta(\xi,\eta)
+\sigma(x, \xi,\eta)(1-\Theta(\xi,\eta)).
\]
By simply summing the estimate 
of Lemma \ref{sum-on-annulus} over $k\ge 0$ and $0\le j\le 2$, 
we obtain   
\[
\|T_{\sigma\Theta}(f,g)\|_{L^2}
\lesssim \|f\|_{L^2}\|g\|_{L^{\infty}}.  
\]
Hence 
it is sufficient to treat only $T_{\sigma(1-\Theta)}$. 
Next, if $(\xi,\eta)$ belongs to the unit 
sphere $\Sigma$ of $\R^n \times \R^n$, 
then either $\xi+\eta \neq 0$ or $\xi \neq 0$. 
By the compactness of $\Sigma$,
this implies that there exists a constant $c>0$ such that
$\Sigma$ is covered by the two open sets
\begin{equation*}
V_1=\{(\xi,\eta) \in \Sigma \,:\,
|\xi+\eta|>c\},
\quad 
V_2=\{(\xi,\eta) \in \Sigma \,:\,
|\xi|>c\}.
\end{equation*}
Taking a smooth partition of unity $\Phi_i$, $i=1,2$,
on $\Sigma$ such that $\mathrm{supp}\, \Phi_i \subset V_i$,
we decompose $\sigma (1-\Theta)$ as 
\[
\sigma(x,\zeta)(1-\Theta(\zeta))
=\sum_{i=1}^{2}\sigma(x,\zeta)(1-\Theta(\zeta))
\Phi_i(\zeta/|\zeta|)
=\sum_{i=1}^{2}\sigma^{(i)}(x,\zeta),
\quad \zeta=(\xi,\eta).
\]
It is sufficient to prove the estimate 
for each $T_{\sigma^{(i)}}$, $i=1,2$. 
Obviously $\sigma^{(i)} \in BS^m_{\rho,\rho}(\R^n)$.

To sum up, by writing $\sigma^{(i)}$ simply as $\sigma$,
we may assume that $\sigma$ satisfies the additional condition 
\[
\suppast \sigma
\subset \Gamma(V_i)=\{\zeta \in \R^{2n} 
\,:\, \zeta/|\zeta| \in V_i, \ |\zeta| \ge 2\}
\quad
\text{for $i=1$ or $2$}.
\]
For such $\sigma$, we have 
$\sigma_{j,k,\nu}=0$ for $j=0$ 
and thus the decomposition \eqref{symbol-dec-0} 
takes the form 
\begin{equation}\label{symbol-dec-1}
\sigma = 
\sum_{j\ge 1}\,
\sum_{\nu =(\nu _1, \nu_2)\in \Z^n \times \Z^n}\,
\sigma_{j,\nu}
=
\sum_{j\ge 1}\sum_{k\ge 0}\,
\sum_{\nu =(\nu _1, \nu_2)\in \Z^n \times \Z^n}\,
\sigma_{j,k,\nu}. 
\end{equation}
In the rest of the proof, we shall consider the two cases 
\[
\suppast \sigma \subset \Gamma(V_1), 
\quad 
\suppast \sigma \subset \Gamma(V_2) 
\]
separately.

We shall prove the following 
estimate for the trilinear form: 
\[
|\langle T_{\sigma}(f,g), h \rangle |
\lesssim 
\|f\|_{L^2} 
\|g\|_{L^{\infty}} 
\|h\|_{L^2},   
\]
which is equivalent to the desired estimate 
for the operator $T_{\sigma}$. 
\vspace{12pt}

\noindent {\bf The case $\suppast \sigma \subset \Gamma(V_1)$}.

In this case, all $(\xi, \eta) \in \suppast \sigma$ satisfy 
$|\xi+ \eta|\approx (|\xi|^2 + |\eta|^2)^{1/2}$ 
(but $|\xi|$ may be small compared with $(|\xi|^2 + |\eta|^2)^{1/2}$). 
We take a positive integer $a$ such that 
\begin{equation}\label{suppsigma-xi+eta}
 (\xi, \eta)\in \suppast (\sigma_{j,k,\nu}) 
\, \Rightarrow \, 
2^{j-a}\le |\xi + \eta|\le 2^{j+a}. 
\end{equation}
Using this $a$, we write \eqref{symbol-dec-1} as 
\begin{equation*}
\sigma= \sum_{j \ge 1}\sum_{k \ge 0}\sum_{\nu}
\sigma_{j,k,\nu}
=
\sum_{\substack{
j \ge 1, \, k \ge 0 
\\ 
k \le j(1-\rho)-a-2}}
\sum_{\nu}
\sigma_{j,k,\nu}
+\sum_{\substack{
j \ge 1, \, k \ge 0 
\\ 
k > j(1-\rho)-a-2}}
\sum_{\nu}
\sigma_{j,k,\nu}.
\end{equation*}
According to this decomposition of 
$\sigma$, we write 
the trilinear form as 
\begin{align*}
&\langle T_{\sigma}(f,g), h \rangle 
\\
&=
\sum_{\substack{
j \ge 1, \, k \ge 0 
\\ 
k \le j(1-\rho)-a-2}}
\sum_{\nu}
\langle T_{\sigma_{j,k,\nu}}(f,g), h \rangle 
+\sum_{\substack{
j \ge 1, \, k \ge 0 
\\ 
k > j(1-\rho)-a-2}}
\sum_{\nu}
\langle T_{\sigma_{j,k,\nu}}(f,g), h \rangle
\\
&=X_1 + X_2, 
\quad 
\text{say}. 
\end{align*}

The estimate for the second term $X_2$ is easy. 
In fact, Lemma \ref{sum-on-annulus} gives 
\begin{align*}
&|X_2|
\le 
\sum_{\substack{
j \ge 1, \, k \ge 0 
\\ 
k > j(1-\rho)-a-2}}
\bigg\|
\sum_{\nu}
T_{\sigma_{j,k,\nu}}(f,g)
\bigg\|_{L^2}
\|h\|_{L^2}
\\
&
\lesssim 
\sum_{\substack{
j \ge 1, \, k \ge 0 
\\ 
k > j(1-\rho)-a-2}}
2^{-kN}
\|f\|_{L^2}
\|g\|_{L^{\infty}}
\|h\|_{L^2}
\approx 
\|f\|_{L^2}
\|g\|_{L^{\infty}}
\|h\|_{L^2},  
\end{align*}
where we used the assumption $\rho<1$ in the last $\approx$.

In order to estimate $X_1$, we use the decomposition 
\[
f=\sum_{\ell } f_{\ell}, 
\quad 
f_{\ell}=\psi_{\ell}(D)f, 
\]
and write 
\[
X_{1}
=
\sum_{\substack{
j \ge 1, \, k \ge 0 
\\ 
k \le j(1-\rho)-a-2}}
\sum_{\ell \ge 0}
\sum_{\nu}
\langle T_{\sigma_{j,k,\nu}}(f_{\ell},g), h \rangle. 
\]

Here we make simple observations.  
First, if $k\le j(1-\rho) -a-2$, then 
from \eqref{FouriersuppTsjkn} and \eqref{suppsigma-xi+eta} 
we see that 
\begin{align*}
\supp \calF [T_{\sigma_{j,k,\nu}}(f,g)] 
&\subset 
\bigcup_{2^{j-a} \le |\xi+\eta|\le 2^{j+a}}
\{\zeta \in \R^n 
\,:\, 
|\zeta - \xi -\eta|\le 2^{j\rho + k +1}
\}
\\
&\subset 
\{\zeta \in \R^n 
\,:\, 
2^{j-a-1}\le |\zeta|\le 2^{j+a+1}
\}. 
\end{align*}
Hence, by \eqref{replace-h1}, 
$\langle T_{\sigma_{j,k,\nu}}(f_{\ell},g), h \rangle $ 
in $X_1$ can be written as 
\[
\langle T_{\sigma_{j,k,\nu}}(f_{\ell},g), h \rangle 
=\langle T_{\sigma_{j,k,\nu}}(f_{\ell},g), h_{j} \rangle, 
\quad 
 h_j =\theta (2^{-j} D)h, 
\]
where 
$\theta$ is an appropriate 
function supported in an annulus. 
Secondly, 
since $\supp \widehat{f}_{\ell}
\subset \{2^{\ell -1}\le |\xi|\le 2^{\ell +1}\}$ 
for $\ell >0$ and since 
$\suppast (\sigma_{j,k,\nu}) \subset \{|\xi|\le 2^{j+1}\}$, 
it follows that 
$T_{\sigma_{j,k,\nu}}(f_{\ell},g)=0$ for 
$\ell > j+1$. 
Thirdly, since 
$\suppast (\sigma_{j,k,\nu})
\subset 
\supp \varphi (2^{-j \rho} \cdot - \nu_{1})
\times \R^n$ 
and $\supp \widehat{f}_{\ell} \subset \supp \psi_{\ell}$, 
we have 
\[
T_{\sigma_{j,k,\nu}}(f_{\ell},g)\neq 0
\; \Rightarrow \; 
\supp \varphi (2^{- j\rho}\cdot - \nu_1) 
\cap \supp \psi_{\ell} \neq 
\emptyset . 
\]
Combining these observations, we see that $X_1$ can be written as 
\begin{equation}\label{X1}
X_1 
=
\sum_{\substack{
j \ge 1, \, k \ge 0 
\\ 
k \le j(1-\rho)-a-2}}
\,
\sum_{\ell =0}^{j+1}
\, 
\sum_{\nu_{1}\in \Lambda_{j,\ell}}
\, 
\sum_{\nu_2 \in \Z^n}
\, 
\langle T_{\sigma_{j,k,\nu}}(f_{\ell},g), h_j \rangle, 
\end{equation}
where 
\[
\Lambda_{j,\ell}
=\{\nu_1 \in \Z^n
\,:\,
\supp \varphi (2^{-j\rho}\cdot - \nu_1) 
\cap \supp \psi_{\ell}
\neq \emptyset
\}. 
\]

The number of elements of $\Lambda_{j,\ell}$ satisfies 
\[
|\Lambda_{j,\ell}|
\lesssim 
(\max \{1, 2^{\ell - j\rho}\})^{n}. 
\]
Thus Lemma \ref{sum-on-several-lines} (1) gives 
\begin{align*}
&\bigg|
\sum_{\nu_{1}\in \Lambda_{j,\ell}}
\sum_{\nu_2 \in \Z^n}
\langle T_{\sigma_{j,k,\nu}}(f_{\ell},g), h_j \rangle
\bigg|
\\
&
\le 
\bigg\|
\sum_{\nu_{1}\in \Lambda_{j,\ell}}
\sum_{\nu_2 \in \Z^n}
T_{\sigma_{j,k,\nu}}(f_{\ell},g)
\bigg\|_{L^2}
\|h_j \|_{L^2}
\\
&
\lesssim 
\max \{1, 2^{(\ell -j\rho)n/2}\}
2^{jm -kN} 
\|f_{\ell}\|_{L^2}
\|g\|_{L^{\infty}}
\|h_j \|_{L^2}. 
\end{align*}
Hence 
\begin{align}
\label{case1-final}
|X_1 |
&\lesssim 
\sum_{\substack{
j \ge 1, \, k \ge 0 
\\ 
k \le j(1-\rho)-a-2}}
\sum_{\ell =0}^{j+1}
\max \{1, 2^{(\ell -j\rho)n/2}\}
2^{jm -kN} 
\|f_{\ell}\|_{L^2}
\|g\|_{L^{\infty}}
\|h_j \|_{L^2}
\nonumber 
\\
&\le 
\sum_{k\ge 0}\, 
\sum_{\substack{
j\ge 1, \, \ell \ge 0 
\\
\ell \le j+1
}}
\max \{1, 2^{(\ell -j\rho)n/2}\}
2^{jm -kN} 
\|f_{\ell}\|_{L^2}
\|g\|_{L^{\infty}}
\|h_j \|_{L^2}.
\end{align}
Under our assumption 
$m=-(1-\rho)n/2<0$, it holds that 
\begin{equation}\label{Schur1}
\sum_{j \ge 1}
\ichi \{\ell \le j+1\}
\max\{1,2^{(\ell-j\rho)n/2}\}\, 2^{jm}
\approx 1 
\quad \text{for all}\quad 
\ell \ge 0
\end{equation}
and 
\begin{equation}\label{Schur2}
\sum_{\ell \ge 0}
\ichi \{\ell \le j+1\}
\max\{1,2^{(\ell-j\rho)n/2}\}\, 2^{jm}
\approx 1
\quad \text{for all}\quad  
j\ge 1. 
\end{equation}
Hence, by Schur's lemma, 
\eqref{case1-final} is bounded by 
\[
\sum_{k\ge 0}
2^{-kN} 
\bigg(
\sum_{\ell \ge 0} \|f_{\ell}\|_{L^2}^2
\bigg)^{1/2}
\bigg(
\sum_{j\ge 1} \|h_{j}\|_{L^2}^2
\bigg)^{1/2}
\|g\|_{L^{\infty}}
\lesssim 
\|f\|_{L^2} 
\|h\|_{L^2} 
\|g\|_{L^{\infty}}.  
\]
This completes the proof for the first case. 
\vspace{12pt}

\noindent {\bf The case $\suppast \sigma 
\subset \Gamma(V_2)$}.

In this case, 
all $(\xi, \eta) \in \suppast \sigma$ satisfy 
$|\xi| \approx (|\xi|^2 + |\eta|^2)^{1/2}$ 
(but $|\xi + \eta|$ may be small compared 
with $(|\xi|^2 + |\eta|^2)^{1/2}$). 
We divide the sum over $(j,k)$ in \eqref{symbol-dec-1} into 
two parts $k \le j(1-\rho)$ and $k > j(1-\rho)$ and write 
the trilinear form $\langle T_{\sigma}(f,g), h \rangle$ as 
\begin{align*}
\langle T_{\sigma}(f,g), h \rangle 
&=
\sum_{\substack{
j \ge 1, \, k \ge 0 
\\ 
k \le j(1-\rho)}}
\, 
\sum_{\nu}
\, 
\langle T_{\sigma_{j,k,\nu}}(f,g), h \rangle 
+
\sum_{\substack{
j \ge 1, \, k \ge 0 
\\ 
k > j(1-\rho)}}
\, 
\sum_{\nu}
\, 
\langle T_{\sigma_{j,k,\nu}}(f,g), h \rangle
\\
&=Y_1 + Y_2, 
\quad 
\text{say}. 
\end{align*}

As in the first case, 
the estimate for the second term $Y_2$ is easy. 
In fact, Lemma \ref{sum-on-annulus} gives 
\begin{align*}
&|Y_2|
\le 
\sum_{\substack{
j \ge 1, \, k \ge 0 
\\ 
k > j(1-\rho)}}
\bigg\|
\sum_{\nu}
T_{\sigma_{j,k,\nu}}(f,g)
\bigg\|_{L^2}
\|h\|_{L^2}
\\
&
\lesssim 
\sum_{\substack{
j \ge 1, \, k \ge 0 
\\ 
k > j(1-\rho)}}
2^{-kN}
\|f\|_{L^2}
\|g\|_{L^{\infty}}
\|h\|_{L^2}
\approx 
\|f\|_{L^2}
\|g\|_{L^{\infty}}
\|h\|_{L^2},  
\end{align*}
where the last $\approx$ holds because $1-\rho>0$.

In order to estimate $Y_1$, we use the decomposition 
\[
h=\sum_{\ell } h_{\ell}, 
\quad 
h_{\ell}=\psi_{\ell}(D)h, 
\]
and write 
\[
Y_{1}
=
\sum_{\substack{
j \ge 1, \, k \ge 0 
\\ 
k \le j(1-\rho)}}\, 
\sum_{\ell \ge 0}\, 
\sum_{\nu = (\nu_1, \nu_2)\in \Z^n \times \Z^n}\, 
\langle T_{\sigma_{j,k,\nu}}(f,g), h_{\ell} \rangle . 
\]

Here observe the following. 
Firstly, since 
$|\xi|\approx (|\xi|^2 + |\eta|^2)^{1/2}$ 
for $(\xi, \eta) \in \suppast \sigma$, 
there exists a positive integer $b$ such that 
$\suppast (\sigma_{j,k,\nu}) \subset \{(\xi, \eta) 
\,:\, 
2^{j-b}\le |\xi| \le 2^{j+b}\}$. 
Hence, by \eqref{replace-f1-g1}, 
\[
T_{\sigma_{j,k,\nu}}(f,g)
= T_{\sigma_{j,k,\nu}}(f_j,g), 
\quad 
 f_j =\theta (2^{-j} D)f, 
\]
where 
$\theta$ is an appropriate 
function supported in an annulus. 
Secondly, if $k\le j(1-\rho)$, then 
\eqref{FouriersuppTsjkn} yields 
\begin{align*}
\supp \calF [T_{\sigma_{j,k,\nu}}(f_j,g)] 
&\subset 
\bigcup_{(|\xi|^2+|\eta|^2)^{1/2}\le 2^{j+1}} 
\{\zeta \in \R^n 
\,:\, 
|\zeta - \xi -\eta|\le 2^{j\rho + k +1}
\}
\\
&\subset 
\{\zeta \in \R^n 
\,:\, 
|\zeta|\le 2^{j+3}
\}, 
\end{align*}
which together with the fact 
$\supp \widehat{h}_{\ell}\subset \supp \psi_{\ell}$ 
implies 
$\langle T_{\sigma_{j,k,\nu}}(f_j,g), h_{\ell}\rangle =0$ 
for 
$\ell > j+3$.
Thirdly, as we have already seen, 
\eqref{Fouriersupp-n1n2} holds, and hence, 
by \eqref{replace-h1}, 
\begin{equation*}
\langle T_{\sigma_{j,k,\nu}}(f_j,g), h_{\ell} \rangle \ne 0
\;\Rightarrow \;
(2^{j\rho} (\nu_1 +\nu_2) + 2^{j\rho + k+2} Q )  
\cap \supp \psi_{\ell} \neq 
\emptyset. 
\] 
Combining these observations, we see that $Y_1$ can be written as 
\begin{equation}\label{Y1}
Y_1 
=
\sum_{\substack{
j \ge 1, \, k \ge 0 
\\ 
k \le j(1-\rho)}}
\, 
\sum_{\ell =0}^{j+3}\, 
\sum_{\mu \in \Lambda_{j,k,\ell}}\, 
\sum_{\nu_{1}+ \nu_2 = \mu}\, 
\langle T_{\sigma_{j,k,\nu}}(f_j,g), h_{\ell} \rangle, 
\end{equation}
where 
\[
\Lambda_{j,k,\ell}
=\{\mu \in \Z^n
\,:\,
(2^{j\rho} \mu + 2^{j\rho + k+2} Q )  
\cap \supp \psi_{\ell} \neq 
\emptyset 
\}. 
\]

The number of elements of $\Lambda_{j,k,\ell}$ is 
estimated by 
\begin{equation*}
|\Lambda_{j,k,\ell}|
\lesssim 
(\max \{2^{k}, 2^{\ell -j \rho}\})^{n} 
\lesssim 
2^{kn} 
\max \{1, 2^{(\ell -j \rho)n}\}. 
\end{equation*}
Thus Lemma \ref{sum-on-several-lines} (2) gives 
\begin{align*}
&\bigg|
\sum_{\mu \in \Lambda_{j,k,\ell}}
\sum_{\nu_1+ \nu_2 =\mu }
\langle T_{\sigma_{j,k,\nu}}(f_{\ell},g), h_j \rangle
\bigg|
\\
&
\le 
\bigg\|
\sum_{\mu \in \Lambda_{j,k,\ell}}
\sum_{\nu_1+ \nu_2 =\mu }
T_{\sigma_{j,k,\nu}}(f_{\ell},g)
\bigg\|_{L^2}
\|h_j \|_{L^2}
\\
&
\lesssim 
\max \{1, 2^{(\ell -j\rho)n/2}\}
2^{jm -k(N-n/2)} 
\|f_{\ell}\|_{L^2}
\|g\|_{L^{\infty}}
\|h_j \|_{L^2}. 
\end{align*}
Hence 
\begin{align}
\label{case2-final}
|Y_1 |
&\lesssim 
\sum_{\substack{
j \ge 1, \, k \ge 0 
\\ 
k \le j(1-\rho)}}
\sum_{\ell =0}^{j+3}
\max \{1, 2^{(\ell -j\rho)n/2}\}
2^{jm -k(N-n/2)} 
\|f_{\ell}\|_{L^2}
\|g\|_{L^{\infty}}
\|h_j \|_{L^2}
\nonumber 
\\
&\le 
\sum_{k\ge 0}\, 
\sum_{\substack{
j\ge 1, \, \ell \ge 0 
\\
\ell \le j+3
}}
\max \{1, 2^{(\ell -j\rho)n/2}\}
2^{jm -k(N-n/2)} 
\|f_{\ell}\|_{L^2}
\|g\|_{L^{\infty}}
\|h_j \|_{L^2}.
\end{align}
Since \eqref{Schur1} and \eqref{Schur2} hold 
if $\ell \le j+1$ is replaced by $\ell \le j+3$, 
by Schur's lemma, 
\eqref{case2-final} is bounded by 
\[
\sum_{k\ge 0}
2^{-k(N-n/2)} 
\bigg(
\sum_{\ell \ge 0} \|f_{\ell}\|_{L^2}^2
\bigg)^{1/2}
\bigg(
\sum_{j\ge 1} \|h_{j}\|_{L^2}^2
\bigg)^{1/2}
\|g\|_{L^{\infty}}
\lesssim 
\|f\|_{L^2} 
\|h\|_{L^2} 
\|g\|_{L^{\infty}}, 
\]
which gives the desired estimate for $Y_1$. 
This completes the proof of Theorem \ref{main-thm-1}.

\section{Proof of Theorem \ref{main-thm-2}}\label{section4}

In this section,
we shall prove Theorem \ref{main-thm-2}. 
The main scheme of the arguments is the same as 
that of Naibo \cite{Naibo}. 
In the last step, we introduce a new idea 
of using weak type estimates.

Since the theorem is already proved in the case $\rho=0$ 
(see \cite{Miyachi-Tomita}), 
for the rest of this section, we assume 
$0<\rho<1$, $m=-(1-\rho)n$, and 
$\sigma \in BS^{m}_{\rho, \rho}(\R^n)$.

Using the function $\Psi_j$ of Subsection \ref{subsection-decomposition}, 
we decompose $\sigma$ as 
\begin{align}
&
\sigma (x,\xi,\eta)
=
\sum_{j=0}^{\infty}
\sigma_{j} (x,\xi,\eta), 
\label{sigma-sum-sigmaj}
\\
&
\sigma_{j} (x,\xi,\eta)
=\sigma (x,\xi,\eta)
\Psi_{j}(\xi, \eta). 
\label{def-sigmaj}
\end{align}
We write the inverse Fourier transform of $\sigma_{j}$ 
with respect to $(\xi, \eta)$ as 
\begin{equation*}
K_{j} (x,y,z)=\frac{1}{(2\pi )^{2n}}
\int_{\R^n \times \R^n} 
e^{i(y \cdot \xi + z \cdot \eta)} 
\sigma_{j} (x,\xi,\eta)\, d\xi d\eta.  
\end{equation*}

First, we shall prove that $K_j$ satisfy the following estimates: 
\begin{align}
&
\|(1+2^{j\rho}|y|)^{N_1}
(1+2^{j\rho}|z|)^{N_2}
K_{j}(x,y,z)
\|_{L^2_{y,z}}
\lesssim 
2^{j (m+n)}, 
\label{weightKj-0}
\\
&
\|(1+2^{j\rho}|y|)^{N_1}
(1+2^{j\rho}|z|)^{N_2}
\nabla_{x} 
K_{j}(x,y,z)
\|_{L^2_{y,z}}
\lesssim 
2^{j (\rho + m+n)}, 
\label{weightKj-nabla-x}
\\
&
\|(1+2^{j\rho}|y|)^{N_1}
(1+2^{j\rho}|z|)^{N_2}
\nabla_{y} 
K_{j}(x,y,z)
\|_{L^2_{y,z}}
\lesssim 
2^{j (1+m+n)}, 
\label{weightKj-nabla-y}
\\
&
\|(1+2^{j\rho}|y|)^{N_1}
(1+2^{j\rho}|z|)^{N_2}
\nabla_{z} 
K_{j}(x,y,z)
\|_{L^2_{y,z}}
\lesssim 
2^{j (1+m+n)}, 
\label{weightKj-nabla-z}
\end{align}
where $\nabla_{x}, \nabla_{y}, \nabla_{z}$ denote the gradient 
operator with respect to $x,y,z$ respectively, 
and $N_1$ and $N_2$ can be arbitrary nonnegative real numbers.

To prove \eqref{weightKj-0}, observe that 
$1+|\xi|+|\eta| \approx 2^{j}$ for  
all $(\xi,\eta)\in \suppast (\sigma_j)$ 
and $\sigma_j$ satisfies the estimate  
\[
|\partial_{\xi}^{\beta}
\partial_{\eta}^{\gamma}
\sigma_{j} (x,\xi,\eta)|
\lesssim  
(2^j)^{m-\rho |\beta|- \rho |\gamma|}
\ichi \{1+|\xi|+|\eta|\approx 2^j\}. 
\]
Taking inverse Fourier transform with respect to $(\xi, \eta)$ 
and using Plancherel's theorem, 
we obtain 
\[
\|
(2^{j\rho}y)^{\beta}
(2^{j\rho}z)^{\gamma}
K_{j} (x,y,z)\|_{L^{2}_{y,z}}
\lesssim  
(2^j)^{m+ n},  
\]
from which \eqref{weightKj-0} follows. 
The estimates \eqref{weightKj-nabla-x}, 
\eqref{weightKj-nabla-y}, and \eqref{weightKj-nabla-z} 
can be derived from the estimates 
\begin{align*}
&
|\partial_{\xi}^{\beta}
\partial_{\eta}^{\gamma}
\nabla_{x}\sigma_{j} (x,\xi,\eta)|
\lesssim  
(2^j)^{m+\rho-\rho |\beta|- \rho |\gamma|}
\ichi \{1+|\xi|+|\eta|\approx 2^j\},  
\\
&
|\partial_{\xi}^{\beta}
\partial_{\eta}^{\gamma}
\{\xi \sigma_{j} (x,\xi,\eta)\}|
\lesssim  
(2^j)^{m+1 -\rho |\beta|- \rho |\gamma|}
\ichi \{1+|\xi|+|\eta|\approx 2^j\},  
\\
&
|\partial_{\xi}^{\beta}
\partial_{\eta}^{\gamma}
\{\eta \sigma_{j} (x,\xi,\eta)\}|
\lesssim  
(2^j)^{m+1 -\rho |\beta|- \rho |\gamma|}
\ichi \{1+|\xi|+|\eta|\approx 2^j\} 
\end{align*}
in the same way.

Now we proceed to the proof of the $L^{\infty} \times L^{\infty} \to BMO$ 
boundedness of $T_{\sigma}$. 
Let $f,g$ be functions satisfying 
$\|f\|_{L^{\infty}}=\|g\|_{L^{\infty}}=1$ 
and let $Q$ be a cube in $\R^n$. 
We denote by $\ell(Q)$ the side length of $Q$,
and by $x_Q$ the center of $Q$.
It is sufficient to prove that there exists a complex number 
$C_{Q}$ such that 
\begin{equation*}
\frac{1}{|Q|} 
\int_{Q} 
|T_{\sigma}(f,g)(x) - C_{Q}|\, dx \lesssim 1. 
\end{equation*}
We write $h=\ell (Q)$ and take 
the cube $\widetilde{Q}$ with the same center 
as $Q$ and with the sidelength 
\[
\ell (\widetilde{Q})
=
\left\{
\begin{array}{ll}
{2 h^{\rho}} & {\qquad\text{if $h\le 1$,}\quad }\\ 
{2h } & {\qquad\text{if $h> 1$.}\quad }
\end{array}
\right.
\]
We divide $f$ and $g$ as  
\begin{align*}
&
f= f \ichi_{\widetilde{Q}} + f \ichi_{\widetilde{Q}^{c}} = f^{(0)}+ f^{(1)}, 
\\
&
g=g \ichi_{\widetilde{Q}} + g \ichi_{\widetilde{Q}^{c}} = g^{(0)}+ g^{(1)}, 
\end{align*}
and divide $T_{\sigma}(f,g)$ into four parts 
\begin{align*}
T_{\sigma}(f,g)&=
T_{\sigma}(f^{(0)}, g^{(0)})
+
T_{\sigma}(f^{(0)}, g^{(1)})
+T_{\sigma}(f^{(1)}, g^{(0)})
+T_{\sigma}(f^{(1)}, g^{(1)})
\\
&=
F^{(1)}+F^{(2)}+F^{(3)}+F^{(4)}, 
\quad \text{say}.  
\end{align*}
For each $i=1,2,3,4$, we shall show that there exists 
a complex number $C^{(i)}_{Q}$ such that 
\begin{equation}\label{BMOFi}
\frac{1}{|Q|} 
\int_{Q} 
|F^{(i)}(x) - C^{(i)}_{Q}|\, dx \lesssim 1. 
\end{equation}
We divide the argument into two cases, 
$h> 1$ and $h\le 1$. 
\vspace{12pt}

\noindent {\bf The case $h=\ell (Q)>1$.}  

In this case, we shall prove 
\eqref{BMOFi} with $C^{(i)}_{Q}=0$ for all $i$.

{\it Estimate for $F^{(4)}$.\/} 
We have 
\begin{equation*}
F^{(4)}(x)
=T_{\sigma}(f^{(1)}, g^{(1)})(x)
=\sum_{j=0}^{\infty}T_{\sigma_j}(f^{(1)}, g^{(1)})(x). 
\end{equation*}
Using the kernel $K_j$ and using 
Schwarz's inequality, 
we have 
\begin{align*}
&
|T_{\sigma_{j}}(f^{(1)}, g^{(1)})(x)|
=\bigg|
\int_{\substack{
y\in \widetilde{Q}^{c}
\\
z\in \widetilde{Q}^{c}
}}
K_{j}(x,x-y, x-z)
f(y)g(z)\, dydz
\bigg| 
\\
&\le 
\bigg\|
h^{n} 
\bigg(\frac{|x-y|}{h}\bigg)^{N_{1}}
\bigg(\frac{|x-z|}{h}\bigg)^{N_{2}}
K_{j}(x,x-y, x-z)
\bigg\|_{L^2( y\in \widetilde{Q}^{c},\, z\in \widetilde{Q}^c)}
\\
&\quad 
\times \bigg\|
h^{-n} 
\bigg(\frac{|x-y|}{h}\bigg)^{-N_1}
\bigg(\frac{|x-z|}{h}\bigg)^{-N_2}
f(y) g(z) 
\bigg\|_{L^2( y\in \widetilde{Q}^{c},\, z\in \widetilde{Q}^c)},  
\end{align*}
where $N_1, N_2 \ge 0$ can be taken arbitrarily. 
The first $L^2$-norm above is estimated by \eqref{weightKj-0} as 
\begin{align*}
&\bigg\|
h^{n} 
\bigg(\frac{|x-y|}{h}\bigg)^{N_1}
\bigg(\frac{|x-z|}{h}\bigg)^{N_2}
K_{j}(x,x-y, x-z)
\bigg\|_{L^2( y\in \widetilde{Q}^{c},\, z\in \widetilde{Q}^{c})}
\\
&\le  
h^{n}
(2^{j\rho} h)^{-N_1}
(2^{j\rho} h)^{-N_2}
\|(2^{j\rho}|y|)^{N_1}(2^{j\rho}|z|)^{N_2}
K_{j}(x,y,z)
\|_{L^2_{y,z}}
\\
&\lesssim 
h^{n}
(2^{j\rho} h)^{-N_1}
(2^{j\rho} h)^{-N_2}
2^{j (m+n)}
=(2^{j\rho}h )^{-N_1 -N_2+n},   
\end{align*}
where the last equality holds because of our assumption $m=-(1-\rho)n$. 
If we take $N_1, N_2>n/2$, then, for $x\in Q$, 
the second $L^2$-norm is 
estimated as 
\begin{align*}
&
\bigg\|
h^{-n} 
\bigg(\frac{|x-z|}{h}\bigg)^{-N_1}
\bigg(\frac{|x-z|}{h}\bigg)^{-N_2}
f(y) g(z) 
\bigg\|_{L^2( y\in \widetilde{Q}^{c},\, z\in \widetilde{Q}^{c})}
\\
&\le 
\bigg\|
h^{-n} 
\bigg(\frac{|x-y|}{h}\bigg)^{-N_1}
\bigg(\frac{|x-z|}{h}\bigg)^{-N_2}
\bigg\|_{L^2( y\in \widetilde{Q}^{c},\, z\in \widetilde{Q}^{c})}
\approx 1. 
\end{align*}
Thus, by taking $N_1, N_2 >n/2$, we obtain 
the pointwise estimate 
\begin{align*}
&
|F^{(4)}(x)|
\le  
\sum_{j=0}^{\infty}
|T_{\sigma_j}(f^{(1)}, g^{(1)})(x)|
\lesssim 
\sum_{j=0}^{\infty}
(2^{j\rho}h )^{-N_1-N_2+n}
\approx h^{-N_1-N_2+n}\le 1
\end{align*}
for all $x\in Q$. 
This certainly implies \eqref{BMOFi} for $i=4$ with $C^{(4)}_{Q}=0$.

{\it Estimate for $F^{(2)}$ and $F^{(3)}$.\/} 
By symmetry, we consider only $F^{(2)}$. 
We write 
\begin{equation*}
F^{(2)}(x)
=T_{\sigma}(f^{(0)}, g^{(1)})(x)
=\sum_{j=0}^{\infty}T_{\sigma_j}(f^{(0)}, g^{(1)})(x). 
\end{equation*}
By Schwarz's inequality, 
we have 
\begin{align*}
&|T_{\sigma_{j}}(f^{(0)}, g^{(1)})(x)|=
\bigg|
\int_{\substack{
y\in \widetilde{Q}
\\
z\in \widetilde{Q}^{c}
}}
K_{j}(x,x-y, x-z)
f(y)g(z)\, dydz
\bigg| 
\\
&\le 
\bigg\|
h^{n} \bigg(\frac{|x-z|}{h}\bigg)^{N_{2}}
K_{j}(x,x-y, x-z)
\bigg\|_{L^2( y\in \widetilde{Q},\, z\in \widetilde{Q}^c)}
\\
&\quad 
\times \bigg\|
h^{-n} 
\bigg(\frac{|x-z|}{h}\bigg)^{-N_2}
f(y) g(z) 
\bigg\|_{L^2( y\in \widetilde{Q},\, z\in \widetilde{Q}^c)},  
\end{align*}
where $N_2 \ge 0$ can be taken arbitrarily. 
The first $L^2$-norm above is estimated by \eqref{weightKj-0} as 
\begin{align*}
&\bigg\|
h^{n} 
\bigg(\frac{|x-z|}{h}\bigg)^{N_2}
K_{j}(x,x-y, x-z)
\bigg\|_{L^2( y\in \widetilde{Q},\, z\in \widetilde{Q}^{c})}
\\
&\le 
h^{n}
(2^{j\rho} h)^{-N_2}
\|(2^{j\rho}|z|)^{N_2}
K_{j}(x,y,z)
\|_{L^2_{y,z}}
\\
&\lesssim 
h^{n}
(2^{j\rho} h)^{-N_2}
2^{j (m+n)}
=(2^{j\rho}h )^{-N_2+n}.  
\end{align*}
If we take $N_2>n/2$, then, for $x\in Q$, 
the second $L^2$-norm is 
estimated as 
\begin{align*}
&
\bigg\|
h^{-n} 
\bigg(\frac{|x-z|}{h}\bigg)^{-N_2}
f(y) g(z) 
\bigg\|_{L^2( y\in \widetilde{Q},\, z\in \widetilde{Q}^{c})}
\\
&\le 
\bigg\|
h^{-n} 
\bigg(\frac{|x-z|}{h}\bigg)^{-N_2}
\bigg\|_{L^2( y\in \widetilde{Q},\, z\in \widetilde{Q}^{c})}
\approx 1. 
\end{align*}
Thus, by taking $N_2 >n$, we obtain 
\begin{align*}
&
|F^{(2)}(x)|
\le  
\sum_{j=0}^{\infty}
|T_{\sigma_j}(f^{(0)}, g^{(1)})(x)|
\lesssim 
\sum_{j=0}^{\infty}
(2^{j\rho}h )^{-N_2+n}
\approx h^{-N_2+n}\le 1
\end{align*}
for all $x\in Q$. 
This implies \eqref{BMOFi} for $i=2$ with $C^{(2)}_{Q}=0$.

{\it Estimate for $F^{(1)}$.\/} 
Since $m=-(1-\rho)n<-(1-\rho)n/2=m_{\rho}(2,2)$, 
the operator $T_{\sigma}$ is bounded in $L^{2}\times L^{2} \to L^{1}$ 
(see Proposition \ref{critical-order}). 
Hence 
\[
\frac{1}{|Q|} 
\int_{Q} 
|F^{(1)}(x)|\, dx 
\le |Q|^{-1}
\|T_{\sigma}(f^{(0)}, g^{(0)})\|_{L^1}
\lesssim 
|Q|^{-1}\|f^{(0)}\|_{L^2} 
\|g^{(0)}\|_{L^2} 
\lesssim 
1.   
\]
\vspace{12pt}

\noindent {\bf The case $h=\ell (Q)\le 1$.}

{\it Estimate for $F^{(4)}$.\/} 
We shall prove the estimate \eqref{BMOFi} for $i=4$ with 
$C^{(4)}_{Q}=F^{(4)}(x_Q)$. 
In the following, $x$ always denotes arbitrary point in $Q$.

To estimate 
$F^{(4)}(x)-F^{(4)}(x_Q)$, 
we write 
\begin{align*}
&F^{(4)}(x)-F^{(4)}(x_Q)
= 
\sum_{j=0}^{\infty}
(T_{\sigma_j}(f^{(1)}, g^{(1)})(x)
-
T_{\sigma_j}(f^{(1)}, g^{(1)})(x_{Q}))
\\
&
=\sum_{j=0}^{\infty}
\int_{
\substack{
y\in \widetilde{Q}^{c}
\\
z\in \widetilde{Q}^{c}
}}
H_{j,Q}(x,y, z)
f(y)g(z)\, dydz, 
\end{align*}
where 
\begin{equation}\label{def-HjQ}
H_{j,Q}(x,y,z)
=K_{j}(x,x-y, x-z)- 
K_{j}(x_{Q},x_{Q}-y, x_{Q}-z).  
\end{equation}

By Schwarz's inequality, 
\begin{equation}\label{F4jx-F4jxQ-Schwarz}
\begin{split}
&
|T_{\sigma_j}(f^{(1)}, g^{(1)})(x) - T_{\sigma_j}(f^{(1)}, g^{(1)})(x_{Q})|
\\
&
\le 
\bigg\|
h^{\rho n}
\bigg(
\frac{|x-y|}{h^\rho }
\bigg)^{N_1}
\bigg(
\frac{|x-z|}{h^\rho }
\bigg)^{N_2}
H_{j,Q}(x,y,z)
\bigg\|_{L^2 (y\in \widetilde{Q}^{c},\,z\in \widetilde{Q}^{c})}
\\
&\quad 
\times 
\bigg\|
h^{-\rho n}
\bigg(
\frac{|x-y|}{h^\rho }
\bigg)^{-N_1}
\bigg(
\frac{|x-z|}{h^\rho }
\bigg)^{-N_2}
f(y) g(z) 
\bigg\|_{L^2 (y\in \widetilde{Q}^{c},\,z\in \widetilde{Q}^{c})}. 
\end{split}
\end{equation}

Since $\|f\|_{\infty}=\|g\|_{\infty}=1$, 
if we take $N_1, N_2 >n/2$, 
the latter $L^2$-norm of \eqref{F4jx-F4jxQ-Schwarz} 
is $\lesssim 1$.

In order to estimate the former 
$L^2$-norm of \eqref{F4jx-F4jxQ-Schwarz}, 
we write 
\begin{equation}\label{def-HjQ-int}
H_{j,Q}(x,y,z)
=\int_{0}^{1}
\nabla K_{j}(x(t),x(t)-y, x(t)-z)
\cdot (x-x_{Q}, x-x_{Q}, x-x_{Q})
\, dt,  
\end{equation} 
where we used the notation $x(t)=x_{Q}+t (x-x_Q)$ and 
\begin{align*}
&\nabla K_{j}(x,y, z)
\cdot (u, v, w)
\\
&=
\nabla_{x} K_{j}(x,y, z)
\cdot u
+ \nabla_{y} K_{j}(x,y, z)
\cdot v
+
\nabla_{z} K_{j}(x,y, z)
\cdot w. 
\end{align*}
Notice that $|x-y|\approx |x(t)-y|$ and 
$|x-z|\approx |x(t)-z|$ for all 
$y,z \in \widetilde{Q}^{c}$ and 
$0<t<1$. 
Hence, by \eqref{def-HjQ-int} and by 
\eqref{weightKj-nabla-x}, 
\eqref{weightKj-nabla-y}, \eqref{weightKj-nabla-z}, 
we can estimate the former 
$L^2$-norm of \eqref{F4jx-F4jxQ-Schwarz} 
as follows: 
(here $\|\cdots \|_{L^2(\ast)}$ means 
$\|\cdots \|_{L^2 (y\in \widetilde{Q}^{c},\,z\in \widetilde{Q}^{c})}$) 
\begin{align*}
&
\bigg\|
h^{\rho n}
\bigg(
\frac{|x-y|}{h^\rho }
\bigg)^{N_1}
\bigg(
\frac{|x-z|}{h^\rho }
\bigg)^{N_2}
H_{j,Q}(x,y,z)
\bigg\|_{L^2 (\ast)}
\\
&\lesssim 
\bigg\|
h^{1+\rho n}
\bigg(
\frac{|x-y|}{h^\rho }
\bigg)^{N_1}
\bigg(
\frac{|x-z|}{h^\rho }
\bigg)^{N_2}
\int_{0}^{1}
|\nabla K_{j}(x(t),x(t)-y, x(t)-z)|\, dt
\bigg\|_{L^2 (\ast)}
\\
&\approx 
\bigg\|
h^{1+\rho n} 
\int_{0}^{1}
\bigg(
\frac{|x(t)-y|}{h^\rho }
\bigg)^{N_1}
\bigg(
\frac{|x(t)-z|}{h^\rho }
\bigg)^{N_2}
|\nabla K_{j}(x(t),x(t)-y, x(t)-z)|
\, dt
\bigg\|_{L^2 (\ast)}
\\
&
\le 
h^{1+\rho n} \int_{0}^{1} 
\bigg\|
\bigg(
\frac{|x(t)-y|}{h^\rho }
\bigg)^{N_1}
\bigg(
\frac{|x(t)-z|}{h^\rho }
\bigg)^{N_2}
\nabla K_{j}(x(t), x(t)-y, x(t)-z)
\bigg\|_{L^2 (\ast)}
\, dt
\\
& 
\lesssim 
h^{1+\rho n} 
(2^{j \rho}h^{\rho})^{-N_1}
(2^{j \rho}h^{\rho})^{-N_2}
2^{j(1+m+n)}
=(2^{j \rho}h^{\rho})^{-N_1 -N_2 +n +1/\rho}, 
\end{align*}
where we used the assumption $m=-(1-\rho)n$ to obtain the last equality.  
On the other hand, if we use \eqref{weightKj-0}, then we can estimate 
the former $L^2$-norm of \eqref{F4jx-F4jxQ-Schwarz} as 
follows: (the notation $\|\cdots \|_{L^2(\ast)}$ is the same as above)
\begin{align*}
&\bigg\|
h^{\rho n}
\bigg(
\frac{|x-y|}{h^\rho }
\bigg)^{N_1}
\bigg(
\frac{|x-z|}{h^\rho }
\bigg)^{N_2}
H_{j,Q}(x,y,z)
\bigg\|_{L^2 (\ast)}
\\
&
\le 
\bigg\|
h^{\rho n}
\bigg(
\frac{|x-y|}{h^\rho }
\bigg)^{N_1}
\bigg(
\frac{|x-z|}{h^\rho }
\bigg)^{N_2}
K_{j}(x,x-y, x-z)
\bigg\|_{L^2 (\ast)}
\\
&\quad 
+ \bigg\|
h^{\rho n}
\bigg(
\frac{|x-y|}{h^\rho }
\bigg)^{N_1}
\bigg(
\frac{|x-z|}{h^\rho }
\bigg)^{N_2}
K_{j}(x_{Q},x_{Q}-y, x_{Q}-z)
\bigg\|_{L^2 (\ast)}
\\
&\approx 
\bigg\|
h^{\rho n}
\bigg(
\frac{|x-y|}{h^\rho }
\bigg)^{N_1}
\bigg(
\frac{|x-z|}{h^\rho }
\bigg)^{N_2}
K_{j}(x,x-y, x-z)
\bigg\|_{L^2 (\ast)}
\\
&\quad 
+
\bigg\|
h^{\rho n}
\bigg(
\frac{|x_{Q}-y|}{h^\rho }
\bigg)^{N_1}
\bigg(
\frac{|x_{Q}-z|}{h^\rho }
\bigg)^{N_2}
K_{j}(x_{Q},x_{Q}-y, x_{Q}-z)
\bigg\|_{L^2 (\ast)}
\\
&
\lesssim 
h^{\rho n} 
(2^{j \rho}h^{\rho})^{-N_1}
(2^{j \rho}h^{\rho})^{-N_2}
2^{j(m+n)}
=(2^{j \rho}h^{\rho})^{-N_1 -N_2 +n}.  
\end{align*}

Combining the above estimates, 
we have the following estimates for arbitrary 
$N_1, N_2  >n/2$:  
\[
|T_{\sigma_j}(f^{(1)}, g^{(1)})(x) - T_{\sigma_j}(f^{(1)}, g^{(1)})(x_{Q})|
\lesssim 
\min \{(2^{j \rho}h^{\rho})^{-N_1 -N_2 +n +1/\rho},\,
(2^{j \rho}h^{\rho})^{-N_1 -N_2 +n}\}. 
\]

Now we take $N_{1}=N_{2}=N$ such that 
$-2N + n +1/\rho >0 > -2N +n$. 
Then taking the sum of 
the above estimates over $j\ge 0$, we obtain 
\[
|F^{(4)}(x)- F^{(4)}(x_Q)|
\le 
\sum_{j=0}^{\infty}
|T_{\sigma_j}(f^{(1)}, g^{(1)})(x) - T_{\sigma_j}(f^{(1)}, g^{(1)})(x_{Q})|
\lesssim 1, 
\quad x\in Q, 
\]
which a fortiori implies 
\eqref{BMOFi} for $i=4$ with $C^{(4)}_{Q}=F^{(4)}(x_Q)$.

{\it Estimate for $F^{(2)}$ and $F^{(3)}$.\/} 
By symmetry, we consider only $F^{(2)}$. 
We shall prove the estimate \eqref{BMOFi} for $i=2$ with 
$C^{(2)}_{Q}=F^{(2)}(x_Q)$. 
In the following, $x$ always denotes arbitrary point in $Q$.

We write 
\begin{align*}
&F^{(2)}(x)-F^{(2)}(x_Q)
= 
\sum_{j=0}^{\infty}
(T_{\sigma_j}(f^{(0)}, g^{(1)})(x)
-
T_{\sigma_j}(f^{(0)}, g^{(1)})(x_{Q}))
\\
&
=
\sum_{j=0}^{\infty}
\int_{
\substack{
y\in \widetilde{Q}
\\
z\in \widetilde{Q}^{c}
}}
H_{j,Q}(x,y, z)
f(y)g(z)\, dydz  
\end{align*}
with $H_{j,Q}$ given by \eqref{def-HjQ}.

By Schwarz's inequality, 
we have 
\begin{equation}\label{F2jx-F2jxQ-Schwarz}
\begin{split}
&
|T_{\sigma_j}(f^{(0)}, g^{(1)})(x) - T_{\sigma_j}(f^{(0)}, g^{(1)})(x_{Q})|
\\
&
\le 
\bigg\|
h^{\rho n}
\bigg(
\frac{|x-z|}{h^\rho }
\bigg)^{N_2}
H_{j,Q}(x,y,z)
\bigg\|_{L^2 (y\in \widetilde{Q},\,z\in \widetilde{Q}^{c})}
\\
&\quad 
\times 
\bigg\|
h^{-\rho n}
\bigg(
\frac{|x-z|}{h^\rho }
\bigg)^{-N_2}
f(y) g(z) 
\bigg\|_{L^2 (y\in \widetilde{Q},\,z\in \widetilde{Q}^{c})}. 
\end{split}
\end{equation}

Since $\|f\|_{\infty}=\|g\|_{\infty}=1$, 
if we take $N_2 >n/2$, then 
the latter $L^2$-norm of \eqref{F2jx-F2jxQ-Schwarz} 
is $\lesssim 1$.

By \eqref{def-HjQ-int} and by 
\eqref{weightKj-nabla-x}, \eqref{weightKj-nabla-y}, and 
\eqref{weightKj-nabla-z}, 
we can estimate the former 
$L^2$-norm of \eqref{F2jx-F2jxQ-Schwarz} 
as  
\begin{align*}
&
\bigg\|
h^{\rho n}
\bigg(
\frac{|x-z|}{h^\rho }
\bigg)^{N_2}
H_{j,Q}(x,y,z)
\bigg\|_{L^2 (y\in \widetilde{Q},\,z\in \widetilde{Q}^{c})}
\\
&\lesssim 
\bigg\|
h^{1+\rho n}
\bigg(
\frac{|x-z|}{h^\rho }
\bigg)^{N_2}
\int_{0}^{1}
|\nabla K_{j}(x(t),x(t)-y, x(t)-z)|\, dt
\bigg\|_{L^2 (y\in \widetilde{Q},\,z\in \widetilde{Q}^{c})}
\\
&\approx 
\bigg\|
h^{1+\rho n} 
\int_{0}^{1}
\bigg(
\frac{|x(t)-z|}{h^\rho }
\bigg)^{N_2}
|\nabla K_{j}(x(t),x(t)-y, x(t)-z)|
\, dt
\bigg\|_{L^2 (y\in \widetilde{Q},\,z\in \widetilde{Q}^{c})}
\\
&
\le 
h^{1+\rho n} \int_{0}^{1} 
\bigg\|
\bigg(
\frac{|x(t)-z|}{h^\rho }
\bigg)^{N_2}
\nabla K_{j}(x(t), x(t)-y, x(t)-z)
\bigg\|_{L^2 (y\in \widetilde{Q},\,z\in \widetilde{Q}^{c})}
\, dt
\\
& 
\lesssim 
h^{1+\rho n} 
(2^{j \rho}h^{\rho})^{-N_2}
2^{j(1+m+n)}
=(2^{j \rho}h^{\rho})^{ -N_2 +n +1/\rho}. 
\end{align*}
On the other hand, using \eqref{weightKj-0}, we can estimate 
the former $L^2$-norm of \eqref{F2jx-F2jxQ-Schwarz} as 
\begin{align*}
&\bigg\|
h^{\rho n}
\bigg(
\frac{|x-z|}{h^\rho }
\bigg)^{N_2}
H_{j,Q}(x,y,z)
\bigg\|_{L^2 (y\in \widetilde{Q},\,z\in \widetilde{Q}^{c})}
\\
&
\le 
\bigg\|
h^{\rho n}
\bigg(
\frac{|x-z|}{h^\rho }
\bigg)^{N_2}
K_{j}(x,x-y, x-z)
\bigg\|_{L^2 (y\in \widetilde{Q},\,z\in \widetilde{Q}^{c})}
\\
&\quad 
+ \bigg\|
h^{\rho n}
\bigg(
\frac{|x-z|}{h^\rho }
\bigg)^{N_2}
K_{j}(x_{Q},x_{Q}-y, x_{Q}-z)
\bigg\|_{L^2 (y\in \widetilde{Q},\,z\in \widetilde{Q}^{c})}
\\
&\approx 
\bigg\|
h^{\rho n}
\bigg(
\frac{|x-z|}{h^\rho }
\bigg)^{N_2}
K_{j}(x,x-y, x-z)
\bigg\|_{L^2 (y\in \widetilde{Q},\,z\in \widetilde{Q}^{c})}
\\
&\quad 
+
\bigg\|
h^{\rho n}
\bigg(
\frac{|x_{Q}-z|}{h^\rho }
\bigg)^{N_2}
K_{j}(x_{Q},x_{Q}-y, x_{Q}-z)
\bigg\|_{L^2 (y\in \widetilde{Q},\,z\in \widetilde{Q}^{c})}
\\
&
\lesssim 
h^{\rho n} 
(2^{j \rho}h^{\rho})^{-N_2}
2^{j(m+n)}
=(2^{j \rho}h^{\rho})^{-N_2 +n}.  
\end{align*}

Combining the above estimates, 
we have the estimates 
\[
|T_{\sigma_j}(f^{(0)}, g^{(1)})(x) - T_{\sigma_j}(f^{(0)}, g^{(1)})(x_{Q})|
\lesssim 
\min \{(2^{j \rho}h^{\rho})^{-N_2 +n +1/\rho},\,
(2^{j \rho}h^{\rho})^{-N_2 +n}\}  
\]
for arbitrary $N_2  >n/2$.

Now we take $N_{2}$ such that 
$-N_{2} + n +1/\rho >0 > -N_{2} +n$ and take the sum of 
the above estimates over $j\ge 0$ to obtain 
\[
|F^{(2)}(x)- F^{(2)}(x_Q)|
\le 
\sum_{j=0}^{\infty}
|T_{\sigma_j}(f^{(0)}, g^{(1)})(x) - T_{\sigma_j}(f^{(0)}, g^{(1)})(x_{Q})|
\lesssim 1, 
\quad x\in Q, 
\]
which a fortiori implies 
\eqref{BMOFi} for $i=2$ with $C^{(2)}_{Q}=F^{(2)}(x_Q)$.

{\it Estimate for $F^{(1)}$.\/} 
We first prove an $L^2$ estimate of $T_{\sigma_j}(f^{(0)}, g^{(0)})$. 
Let $\widetilde{\sigma}_{j}$ be the symbol 
\begin{equation}\label{change-symbol}
\widetilde{\sigma}_{j}(x,\xi,\eta)
=
\sigma_{j}(2^{-j\rho}x,2^{j\rho}\xi,2^{j\rho}\eta). 
\end{equation}
Then a simple change of variables gives 
\begin{equation}\label{change-pdo}
T_{\sigma}(a,b)(2^{-j\rho} x)
=
T_{\widetilde{\sigma}}(a(2^{-j\rho}\cdot ),b(2^{-j\rho}\cdot ))
(x), 
\end{equation}
which implies 
\[
\|T_{\sigma_{j}}\|_{L^2 \times L^{\infty} \to L^2}
=
\|T_{\widetilde{\sigma}_{j}}\|_{L^2 \times L^{\infty} \to L^2}. 
\]
Since $1+|\xi|+|\eta|\approx 2^{j}$ for all 
$(\xi, \eta)\in \suppast (\sigma_{j})$, 
we see that $\widetilde{\sigma}_{j}$ satisfies 
\begin{equation}\label{change-est}
\begin{split}
|\partial_{x}^{\alpha}
\partial_{\xi}^{\beta}
\partial_{\eta}^{\gamma}
\widetilde{\sigma}_{j} (x,\xi, \eta)|
&\lesssim 
2^{jm}\ichi \{1+|\xi|+|\eta|\approx 2^{j(1-\rho)}\}
\\
&\lesssim  
2^{-j(1-\rho)n/2}
(1+|\xi|+|\eta|)^{-n/2}. 
\end{split}
\end{equation}
Hence the theorem of \cite{Miyachi-Tomita} or the case 
$\rho =0$ of Theorem \ref{main-thm-1} yields 
\[
\|T_{\widetilde{\sigma}_{j}}\|_{L^2 \times L^{\infty} \to L^2}\lesssim 
2^{-j(1-\rho)n/2}.
\] 
Thus we obtain 
\begin{equation}\label{L2-estimate}
\begin{split}
&
\|T_{\sigma_{j}} (f^{(0)}, g^{(0)})\|_{L^2}
\lesssim 
2^{-j(1-\rho)n/2}
\|f^{(0)}\|_{L^2}\|g^{(0)}\|_{L^{\infty}}
\\
&\le 
2^{-j(1-\rho)n/2}
|\widetilde{Q}|^{1/2}
\approx 
2^{-j(1-\rho)n/2}
|Q|^{\rho /2}. 
\end{split}
\end{equation}

Next we prove an $L^{\infty}$ estimate of $T_{\sigma_j}(f^{(0)}, g^{(0)})$. 
From the formula 
\[
T_{\sigma_j}(a,b)(x)
=\int_{\R^n} 
K_{j}(x,x-y,x-z) a(y) b(z)\, dydz
\]
and from \eqref{weightKj-0}, we have 
\begin{align*}
&|T_{\sigma_j}(a,b)(x)|
\le 
\|K_{j}(x,x-y,x-z) \|_{L^2_{y,z}}
\|a(y)b(z)\|_{L^2_{y,z}}
\\
&\lesssim 
2^{j (m+n)} 
\|a\|_{L^2}
\|b\|_{L^2}
=2^{j\rho n}
\|a\|_{L^2}
\|b\|_{L^2}. 
\end{align*}
Hence  
\begin{equation}\label{Linfty-estimate}
\begin{split}
&
\|T_{\sigma_{j}} (f^{(0)}, g^{(0)})\|_{L^{\infty}}
\lesssim 
2^{j\rho n}
\|f^{(0)}\|_{2}\|g^{(0)}\|_{2}
\\
&\le 
2^{j \rho n}
|\widetilde{Q}|
\approx 
2^{j \rho n}
|Q|^{\rho}. 
\end{split}
\end{equation}

Now by a characterization of weak $L^p$ functions 
(see Lemma \ref{weakLp} to be given below), 
the estimates \eqref{L2-estimate} and \eqref{Linfty-estimate} 
imply the following weak type estimate 
for $F^{(1)}=\sum_{j=0}^{\infty}T_{\sigma_{j}} (f^{(0)}, g^{(0)})$: 
\[
|\{x \in \R^{n}
\,:\, 
|F^{(1)}(x)| > \lambda 
\}|
\lesssim 
|Q| \lambda^{-1-1/\rho}, \quad \lambda >0.  
\]
From this we obtain  
\begin{align*}
\frac{1}{|Q|}\int_{Q} 
|F^{(1)}(x)|\, 
dx
&=
\int_{0}^{\infty}
|Q|^{-1}
|\{x \in Q
\,:\, 
|F^{(1)}(x)| > \lambda 
\}|\, d\lambda 
\\
&\le 
\int_{0}^{\infty}
\min \{
1, 
\lambda^{-1-1/\rho}\}\, d\lambda 
\approx 1,  
\end{align*}
which is the estimate \eqref{BMOFi} for $i=1$ and $C^{(1)}_Q=0$. 
This completes the proof of Theorem \ref{main-thm-2}. 
\vspace{12pt}

Finally we shall give a proof of the fact that was used 
at the last part of the above argument. 
Here we shall give a slightly general lemma. 
This lemma is equivalent to the fact that the space  
$L^{(p,\infty)}$ is equal to the real interpolation space 
$[L^{\infty}, L^{r}]_{\theta, \infty}$, $1/p=\theta/r$, 
combined with the characterization of the latter space 
by the $J$-method. 
Although this may be known to many people, we shall give a proof 
for reader's convenience.

\begin{lem}\label{weakLp}
Let $0<r<p<\infty$, $\alpha, \beta \in (0,\infty)$, and $0<\theta<1$ 
satisfy $1/p=\theta/r$ and $\alpha/ (\alpha+ \beta)=\theta$. 
Then for nonnegative measurable functions $f$ on a measure space 
the following two conditions are equivalent: 
\begin{itemize}
\item[(i)]  
there exists constants $A,B\in (0,\infty)$ and 
a sequence of nonnegative measurable functions $\{f_j\}_{j\in \Z}$ 
such that 
$\|f_j\|_{L^{\infty}} \le A 2^{j\alpha}$, 
$\|f_j\|_{L^{r}} \le B 2^{-j\beta}$, and $f=\sum_{j\in \Z} f_j$.  
\item[{\rm (ii)}] 
$f\in L^{(p,\infty)}$, i.e., there exists a constant $C\in (0,\infty)$ 
such that 
$|\{x \, :\, f(x)>\lambda\}| \le (C\lambda^{-1})^{p}$ 
for all $\lambda >0$. 
\end{itemize}
To be precise, if {\rm (i)} holds  
then {\rm (ii)} holds with $C=c(p,r,\alpha, \beta)A^{1-\theta}B^{\theta}$, 
and, conversely, 
if {\rm (ii)} holds then {\rm (i)} holds with $A, B\in (0, \infty)$ 
such that $A^{1-\theta}B^{\theta}=c(p,r,\alpha, \beta) C$. 
\end{lem}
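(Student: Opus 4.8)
The plan is to prove the two implications directly and elementarily --- without invoking the real-interpolation machinery --- so that the stated relation $A^{1-\theta}B^{\theta}=cC$ between the constants simply falls out of the computation. The forward direction will rest on a dyadic splitting of the series $\sum_j f_j$, the converse on a layer-cake decomposition of $f$; throughout, the two normalizations $1/p=\theta/r$ and $\alpha/(\alpha+\beta)=\theta$ will be used in the equivalent forms $p/r=(\alpha+\beta)/\alpha=1/\theta$ and $\theta\beta/\alpha=1-\theta$.

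For (i)$\Rightarrow$(ii), I would fix $\lambda>0$ and, using $\|f_j\|_{L^{\infty}}\le A2^{j\alpha}$, let $N=N(\lambda)$ be the largest integer with $A\sum_{j\le N}2^{j\alpha}\le\lambda/2$; summing the geometric series this amounts to $2^{N\alpha}\approx\lambda/A$. Then $\big\|\sum_{j\le N}f_j\big\|_{L^{\infty}}\le\lambda/2$, so $\{f>\lambda\}\subset\{\sum_{j>N}f_j>\lambda/2\}$. By the (quasi-)triangle inequality in $L^{r}$ --- Minkowski when $r\ge1$, the subadditivity of $\|\cdot\|_{L^{r}}^{r}$ when $r<1$ --- together with summation of a geometric series, $\big\|\sum_{j>N}f_j\big\|_{L^{r}}\lesssim B2^{-N\beta}\approx B(A/\lambda)^{\beta/\alpha}$, and Chebyshev's inequality then yields
\[
|\{f>\lambda\}| \lesssim \lambda^{-r}\bigl(B(A/\lambda)^{\beta/\alpha}\bigr)^{r} = B^{r}A^{r\beta/\alpha}\,\lambda^{-r(\alpha+\beta)/\alpha}.
\]
Since $r(\alpha+\beta)/\alpha=r/\theta=p$, this is precisely the $L^{(p,\infty)}$ bound, and on taking $p$-th roots the admissible constant is $C\lesssim B^{r/p}A^{r\beta/(\alpha p)}=A^{1-\theta}B^{\theta}$, using $r/p=\theta$ and $\theta\beta/\alpha=1-\theta$.

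For (ii)$\Rightarrow$(i), I would first note that (ii) forces $|\{f=\infty\}|=0$. Then, fixing a scale $s>0$, I would set $L_j=s\,2^{j\alpha}$ and $f_j=f\wedge L_{j+1}-f\wedge L_j\ge0$; these are measurable, and since $\sum_{j=M}^{N}f_j=f\wedge L_{N+1}-f\wedge L_M\to f$ a.e. as $N\to\infty$ and $M\to-\infty$, one has $f=\sum_{j\in\Z}f_j$. Here $\|f_j\|_{L^{\infty}}\le L_{j+1}=2^{\alpha}s\,2^{j\alpha}$, while $f_j$ vanishes off $\{f>L_j\}$ and is $\le L_{j+1}$ there, so (ii) gives
\[
\|f_j\|_{L^{r}}^{r} \le L_{j+1}^{r}\,\bigl|\{f>L_j\}\bigr| \le L_{j+1}^{r}\,(C/L_j)^{p} = 2^{\alpha r}\,s^{\,r-p}\,C^{p}\,2^{\,j\alpha(r-p)},
\]
and the identity $\alpha(p-r)/r=\beta$ turns this into $\|f_j\|_{L^{r}}\le2^{\alpha}s^{\,1-p/r}C^{p/r}2^{-j\beta}$. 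Thus (i) holds with $A=2^{\alpha}s$ and $B=2^{\alpha}s^{\,1-1/\theta}C^{1/\theta}$, and a direct computation gives $A^{1-\theta}B^{\theta}=2^{\alpha}C$ regardless of $s$; varying $s\in(0,\infty)$ one may in addition prescribe $A$ freely.

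I do not foresee a serious obstacle: the argument is bookkeeping with the exponent identities above, plus the one-line observation that (ii) makes $f$ finite a.e., which is what lets the telescoping series converge to $f$. The only points needing a little care are checking, in (i)$\Rightarrow$(ii), that the power of $\lambda$ and the dependence on $A,B$ emerge correctly simultaneously --- which happens exactly because of the normalizations --- and remembering to use the $r$-subadditivity of $\|\cdot\|_{L^{r}}^{r}$ rather than the ordinary triangle inequality in the regime $r<1$.
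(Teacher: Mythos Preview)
Your proof is correct and follows essentially the same route as the paper's: for (i)$\Rightarrow$(ii) you split $\sum_j f_j$ at a dyadic threshold chosen so the low part is controlled in $L^\infty$ and the high part in $L^r$, then apply Chebyshev, exactly as the paper does (after an index shift); for (ii)$\Rightarrow$(i) you use a layer-cake decomposition, the only cosmetic difference being that you take $f_j=f\wedge L_{j+1}-f\wedge L_j$ while the paper uses $f_j=f\cdot\ichi\{A2^{(j-1)\alpha}<f\le A2^{j\alpha}\}$. Your write-up is in fact slightly more careful than the paper's in flagging the $r<1$ subadditivity and the a.e.\ finiteness of $f$ needed for the telescoping sum to recover $f$.
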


\begin{proof}
${\rm (i)} \Rightarrow{\rm (ii)}$. 
Suppose {\rm (i)} holds and 
write $\gamma =\alpha + \beta$. 
Take an integer $j_0$ such that $A2^{j_0 \alpha}\approx B2^{-j_0 \beta}$ 
and set $C=A2^{j_0 \alpha}$. 
Then $C\approx A^{1-\theta} B^{\theta}$, 
$\|f_{j+j_0}\|_{L^{\infty}} \lesssim C 2^{j\gamma \theta}$, and 
$\|f_{j+j_0}\|_{L^{r}} \lesssim C 2^{-j\gamma (1-\theta)}$. 
For $\lambda \in (0,\infty)$ given, take an integer $j_1$ such that 
$C 2^{j_{1}\gamma \theta}\approx \lambda$ and decompose $f$ as 
\[
f=
\sum_{j\le j_1} f_{j+j_0} 
+
\sum_{j> j_1} f_{j+j_0} 
=f^{(0)}+ f^{(1)}. 
\]
Then 
$\|f^{(0)}\|_{L^{\infty}}\lesssim C 2^{j_{1}\gamma \theta} \approx \lambda$ 
and 
$\|f^{(1)}\|_{L^{r}}\lesssim C 2^{-j_{1}\gamma (1-\theta)} \approx 
C^{1/\theta} \lambda^{1-1/\theta}$. 
Hence, if we take a sufficiently large constant $c_{0}$, which depends only 
on $p,r,\alpha,\beta$, then we have  
\begin{align*}
&
|\{x \, :\, f(x)>c_{0}\lambda\}| 
\le
|\{x \, :\, f^{(1)}(x)>\lambda\}| 
\\
&\le 
\|f^{(1)}\|_{L^{r}}^{r}
\lambda^{-r}
\lesssim 
(C^{1/\theta} \lambda^{1-1/\theta})^{r} \lambda^{-r}
=(C \lambda^{-1})^{p}.
\end{align*}

${\rm (ii)} \Rightarrow {\rm (i)}$. 
Suppose {\rm (ii)} holds. 
Take an $A\in (0, \infty)$ and decompose $f$ as 
\[
f(x) = \sum_{j\in \Z} f_j (x), 
\quad 
f_j (x) = f(x) \ichi \{A2^{(j-1)\alpha} < f(x) \le A2^{j\alpha} \}. 
\]
Then $\|f_j\|_{L^{\infty}}\le A2^{j\alpha}$ 
and 
\[
\|f_j\|_{L^{r}}
\le 
A2^{j\alpha}
|\{x \, :\, f(x) > A2^{(j-1)\alpha} \}|^{1/r}
\le 
A2^{j\alpha}
(C  A^{-1}2^{-(j-1)\alpha} )^{p/r}
= B 2^{-j \beta} 
\] 
with 
$B\approx C^{1/\theta} A^{1-1/\theta} $.

The relations between the constants $A,B$, and $C$ are obvious from 
the above arguments. 
\end{proof}

\section{Proof of Corollary \ref{main-cor}}\label{section5}

It is known that there exist bijective mappings 
$\sigma \mapsto \sigma^{\ast 1}$ and 
$\sigma \mapsto \sigma^{\ast 2}$ 
of $BS^m_{\rho,\rho}$, $0 \le \rho <1$,
onto itself  such that 
\begin{equation}\label{eq310}
\int T_{\sigma}(f,g)(x)h(x)dx
=\int T_{\sigma^{\ast 1}}(h,g)(x)f(x)dx
=\int T_{\sigma^{\ast 2}}(f,h)(x)g(x)dx
\end{equation}
for all $f,g,h \in \calS$ (see \cite[Theorem 2.1]{BMNT}). 
By duality, 
\begin{equation*} 
\|T_{\sigma}\|_{L^2\times L^{\infty} \to L^2}
= 
\|T_{\sigma^{\ast 2}}\|_{L^2 \times L^2 \to L^1}
=
\|T_{\left(\sigma^{\ast 2}\right)^{\ast1}}\|_{L^{\infty} \times L^2 \to L^2}. 
\end{equation*}
In particular, if one of the above is finite, then 
the other two are also finite. 
Thus the desired result for $(p,q)=(2,\infty), (2,2), (\infty,2)$
follows from Theorem \ref{main-thm-1}.
Similarly, by the duality between $H^1$ and $BMO$,
\begin{equation*} 
\|T_{\sigma}\|_{L^{\infty}\times L^{\infty} \to BMO}
\approx  
\|T_{\sigma^{\ast 1}}\|_{H^1 \times L^{\infty} \to L^1}
\approx 
\|T_{\sigma^{\ast 2}}\|_{L^{\infty} \times H^1 \to L^1}. 
\end{equation*}
Hence the desired result for
$(p,q)=(\infty,\infty), (1,\infty), (\infty,1)$
follows from Theorem \ref{main-thm-2}.
Other cases can be obtained from interpolation.  
As for the interpolation argument, see for example 
\cite[Proof of Theorem 2.2]{BBMNT}.

\appendix\section{}\label{appendix}

In this appendix,
we shall prove Proposition \ref{critical-order}. 
Let $0<p,q,r \le \infty$ and $1/p+1/q=1/r$. 
We write 
$m_{0}=m_{0}(p,q)$. 
Recall that $m_{\rho}(p,q)=(1-\rho)m_0$. 
For simplicity of notation, we only consider the case $r<\infty$,
but the argument below works in the case $r=\infty$ as well. 
In fact, in the case $r=\infty$, all we need is to rewrite 
$L^{r}$ by $BMO$.

In \cite[Theorem A.2]{Miyachi-Tomita}, 
it is already proved that  
if $T_{\sigma}: H^p \times H^q \to L^r$ 
for all $m \in BS^{m}_{\rho,\rho}$ then 
$m \le (1-\rho)m_0$. 
Hence, in order to complete the proof of Proposition \ref{critical-order}, 
it is sufficient to show that 
if $m<(1-\rho)m_{0}$ then $T_{\sigma}: H^p \times H^q \to L^r $ 
for all $\sigma \in BS^m_{\rho,\rho}$. 
As we mentioned in Introduction, this has been proved 
in \cite{MRS} and \cite{BBMNT} 
in the range $1/p+1/q \le 1$. 
Here we shall give a proof that is valid for all $0<p,q \le \infty$.

We use the fact that the case $\rho=0$ is already known. 
To be precise, it is known that 
$T_{\sigma}: H^p \times H^q \to L^r $ 
for all $\sigma \in BS^{m_0}_{0,0}$ (see \cite[Theorem 1.1]{Miyachi-Tomita}).
By virtue of the closed graph theorem, 
this boundedness is equivalent to the claim that there exists
a positive integer $N$ and a constant $c$ such that
\begin{equation}\label{equiv-est}
\|T_{\sigma}\|_{H^p \times H^q \to L^r}
\le c \max_{|\alpha|, |\beta|, |\gamma| \le N}
\left(\sup_{x,\xi,\eta \in \R^n}
(1+|\xi|+|\eta|)^{-m_0}
|\partial^{\alpha}_x\partial^{\beta}_{\xi}
\partial^{\gamma}_{\eta}\sigma(x,\xi,\eta)|\right)
\end{equation}
for all $\sigma \in BS^{m_0}_{0,0}$
(see \cite[Lemma 2.6]{BBMNT}).

Now assume that $0<\rho<1$ and 
$\sigma \in BS^m_{\rho,\rho}$ with $m<(1-\rho)m_0$.
In the same way as in Section \ref{section4},
we write $\sigma = \sum_{j=0}^{\infty}\sigma_j$ 
as in \eqref{sigma-sum-sigmaj} and \eqref{def-sigmaj},  
and define $\widetilde{\sigma}_j$ by \eqref{change-symbol}. 
Then \eqref{change-pdo} holds and this, 
together with the relation $1/p + 1/q =1/r$, implies 
\begin{equation}\label{s-tildes}
\|T_{\sigma_j}\|_{H^p \times H^q \to L^r}
=
\|T_{\widetilde{\sigma}_j}\|_{H^p \times H^q \to L^r}. 
\end{equation}
Also, from the same argument as in \eqref{change-est}, 
we see that $\widetilde{\sigma}_j$ satisfies the 
estimate 
\[
|\partial^{\alpha}_x\partial^{\beta}_{\xi}
\partial^{\gamma}_{\eta}\widetilde{\sigma}_j(x,\xi,\eta)|
\le C_{\alpha,\beta,\gamma}\, 
2^{j(m-(1-\rho)m_{0})}
(1+|\xi|+|\eta|)^{m_0}.
\]
Combining this with \eqref{s-tildes} and \eqref{equiv-est},
we have 
\[
\|T_{{\sigma}_j}\|_{H^p \times H^q \to L^r}
=
\|T_{\widetilde{\sigma}_j}\|_{H^p \times H^q \to L^r}
\lesssim
2^{j (m- (1-\rho)m_0)}.  
\]
Since $m<(1-\rho)m_0$, the above inequality implies 
that $T_{\sigma}=\sum_{j=0}^{\infty}T_{\sigma_j}$ 
is bounded from $H^p \times H^q \to L^r$. 
This completes the proof of Proposition \ref{critical-order}.


\end{document}